\documentclass[10pt]{amsart}
\usepackage{amsthm,amsmath,amssymb,amsfonts}
\usepackage{graphicx}
\pagestyle{plain}
\topmargin = 0pt
\textheight = 605pt
\textwidth = 460pt
\oddsidemargin = 7pt
\evensidemargin = 7pt
\footskip=40pt
\newtheorem{lemma}{Lemma}[section]

\newtheorem{theorem}[lemma]{Theorem}
\newtheorem{corollary}[lemma]{Corollary}
\newtheorem{definition}[lemma]{Definition}

\theoremstyle{definition}
\newtheorem{example}[lemma]{Example}



\newcommand{\cH}{\mathcal{ H}}

\newcommand{\cL}{\mathcal{L}}



\newcommand{\Dom}{{\rm Dom}}

\newcommand{\Spec}{{\rm Spec}}
\newcommand{\Span}{{\rm span}}

\renewcommand{\Re}{{\rm Re}\;}
\renewcommand{\Im}{{\rm Im}\;}

\newcommand{\conv}{{\rm conv}}
\newcommand{\dist}{{\rm dist}}

\newcommand{\ess}{\mathrm{ess}}

\newcommand{\range}{\mathrm{Range}}
\newcommand{\dis}{\mathrm{dis}}
\newcommand{\intr}{\mathrm{int}}



\begin{document}
\title{The Second Order Spectrum and Optimal Convergence}
\author{Michael Strauss}
\date{April 2011}
\maketitle
\begin{abstract}
The method of second order relative spectra has been shown to reliably approximate the discrete spectrum
for a self-adjoint operator. We extend the method to normal operators and find optimal convergence rates for
eigenvalues and eigenspaces. The convergence to eigenspaces is new,
while the convergence rate for eigenvalues improves on the previous estimate by an order of magnitude.\\\\
Keywords: spectral pollution, second order relative spectrum, convergence to eigenvalues, convergence to eigenvectors, projection methods, finite-section method.\\\\
2010 Mathematics Subject Classification: 47A75, 47B15.
\end{abstract}
\section{Introduction}
Throughout this manuscript $A$ will be a normal linear operator acting on an infinite dimensional Hilbert
space $\mathcal{H}$. The domain, spectrum, discrete spectrum, essential spectrum, resolvent set and spectral measure of $A$ will be denote by
$\Dom(A)$, $\sigma(A)$, $\sigma_{\dis}(A)$, $\sigma_{\ess}(A)$, $\rho(A)$, and $E$ respectively. Unless otherwise stated we shall
assume that $A$ is bounded. The most commonly used technique for attempting to approximate the spectrum of a linear operator is the
finite-section method: we choose a finite-dimensional subspace $\cL$ with corresponding
orthogonal projection $P$, and calculate the eigenvalues of $PA|_\cL$. If $(\cL_n)_{n\in\mathbb{N}}$ is a sequence of finite-dimensional subspaces
such that the corresponding orthogonal projections $(P_n)$ converge strongly to the identity operator, then we write $(\cL_n)\in\Lambda$.
For such a sequence we define the following limit set
\begin{displaymath}
\lim_{n\to\infty}\sigma(P_nA|_{\cL_n}) = \{z\in\mathbb{C}:\textrm{ there exist }z_n\in\sigma(P_nA|_{\cL_n})\textrm{ with }z_n\to z\}.
\end{displaymath}
For a bounded self-adjoint operator we have
\begin{displaymath}
\lim_{n\to\infty}\sigma(P_nA|_{\cL_n}) \supseteq\sigma(A)\quad\textrm{and}\quad\lim_{n\to\infty}\sigma(P_nA|_{\cL_n}) \subseteq \sigma(A)\cup\conv(\sigma_{\ess}(A))
\end{displaymath}
where $\conv$ denotes the closed convex hull (see for example \cite[Theorem 6.1]{shar}). That the limit set contains $\sigma(A)$
is encouraging; however, this containment can be strict. We say that a $z\in\rho(A)$ is a point of spectral pollution for $(\cL_n)\in\Lambda$ if $z$ belongs to the limit set.
This constitutes a serious problem since spectral pollution can occur anywhere inside a gap in the essential spectrum
(see \cite[Section 2.1]{bost}, \cite[Theorem 2.1]{lesh}, \cite[Theorem 6.1]{shar}). Consequently,
the finite-section method often fails to identify eigenvalues in such gaps (see for example \cite{boff,bodu,daug,rapa}).
The situation for normal operators can be far worse as the following example shows.

\begin{example}\label{ex1}Let $A$ be the bilateral shift operator acting on $\mathcal{H}:=\ell_2(\mathbb{Z})$:
\begin{displaymath}
\textrm{if}\quad(a_k)_{k=-\infty}^\infty\in\ell_2(\mathbb{Z})\quad\textrm{then}\quad
A(a_k)_{k=-\infty}^\infty = (a_{k-1})_{k=-\infty}^\infty.
\end{displaymath}
The operator $A$ is unitary and
$\sigma(A) = \{z\in\mathbb{C}:\vert z\vert = 1\}$. Let $\cL_n = \Span\{e_k\}_{k=-n}^n$ where $e_k$ is the sequence which has zeros
in all slots except the $k^{th}$ which has a $1$. The $\{e_k\}_{k=-\infty}^\infty$ form an orthonormal basis for
$\ell_2(\mathbb{Z})$ and hence $(\cL_n)\in\Lambda$. We obtain $\sigma(P_nA|_{\cL_n}) = \{0\}$ for all $n\in\mathbb{N}$. Therefore the limit set does not even intersect $\sigma(A)$.
\end{example}

For self-adjoint operators there are very few techniques available for avoiding pollution. Notable amongst these is the is the second order relative spectrum (see \cite{dapl,marc,zim}). To apply this method we must solve the quadratic eigenvalue problem $P(A - zI)^2\phi = 0$ for $\phi\in\cL\backslash\{0\}$.
For a self-adjoint $A$, we denote the solutions to this eigenvalue problem by $\Spec_2(A,\cL)$. We have the following useful property: if $z\in \Spec_2(A,\cL)$, then
\begin{equation}\label{lower}
\big[\Re z - \vert\Im z\vert,\Re z+\vert\Im z\vert\big]\cap\sigma(A)\ne\varnothing
\end{equation}
(see \cite[Corollary 4.2]{shar}). This property can often be significantly improved: if $\sigma(A)\cap(a,b) = \{\lambda\}$,
$\mathbb{D}(a,b)$ is the open disc with center
$(a+b)/2$ and radius $(b-a)/2$, and $z\in\Spec_2(A,\cL)\cap\mathbb{D}(a,b)$, then
\begin{equation}\label{high}
\left[\Re z - \frac{\vert\Im z\vert^2}{b - \Re z},\Re z + \frac{\vert\Im z\vert^2}{\Re z - a}\right]\cap\sigma(A) = \{\lambda\}
\end{equation}
(see \cite[Theorem 2.1 and Remark 2.2]{str}, see also \cite[Corollary 2.6]{bole} and \cite{kato0}). The
method is also guaranteed to converge to the discrete spectrum of a self-adjoint operator:
if $(a,b)\cap\sigma(A)\subset\sigma_{\dis}(A)$, then we have
\begin{equation}\label{bostth}
\Big(\lim_{n\to\infty}\Spec_2(A,\cL_n)\Big)\cap\mathbb{D}(a,b) =  \sigma_{\dis}(A)\cap(a,b)\quad\textrm{for all}\quad(\cL_n)\in\Lambda
\end{equation}
(see \cite[Corollary 8]{bost}, also \cite[Theorem 1]{bo}). The method can be traced back to \cite{da} and has been successfully applied to
self-adjoint operators from solid state physics \cite{bole},
relativistic quantum mechanics \cite{bobo}, Stokes systems \cite{lesh}, and magnetohydrodynamics \cite{str}. Applying this method to
normal operators does not seem encouraging as the following example shows.

\begin{example}\label{ex2}Let $A$, $\mathcal{H}$ and $\cL_n$ be as in Example \ref{ex1}. We find that zero is the only solution
to the quadratic eigenvalue problem  $P_n(A - zI)^2|_{\cL_n}$ all $n\in\mathbb{N}$.
\end{example}

For non-self-adjoint operators, the solutions to the quadratic eigenvalue problem $P(A - zI)^2|_\cL$ have received very little
attention (see \cite{shar}). The failure in Example \ref{ex2} of the solutions to converge to any points in the spectrum of the operator is highly unsatisfactory and motivates the following definition.

\begin{definition}\label{eugenelevitin}
Let $A$ be a (possibly unbounded) normal operator and let $\cL$ be a subspace of $\Dom(A)$. The second order spectrum of $A$ relative to $\cL$ is the set
\begin{displaymath}
\Spec_2(A,\cL):=\{z\in\mathbb{C}:\textrm{there exists a }\phi\in\cL\backslash\{0\}\textrm{ with }
\langle(A - z)\phi,(A-\overline{z})\psi\rangle = 0\textrm{ for all }\psi\in\cL\}.
\end{displaymath}
\end{definition}
This is a generalisation of the definition which appears in \cite{lesh} for self-adjoint operators. If $A$ is bounded, then the second order spectrum of $A$ relative to $\cL$ is precisely the solutions to the quadratic eigenvalue
problem $P(A-zI)(A^*-zI)\phi = 0$ for $\phi\in\cL\backslash\{0\}$.  We note that for non-self-adjoint operators this definition differs from that which appears
in \cite{shar}.

In Section 2 we discuss some geometric issues which will cast light on the geometry of the second order relative spectrum.
In Section 3 we linearise the quadratic eigenvalue problem which arises from Definition \ref{eugenelevitin}. By doing this
we are better able to understand how and why the method converges to both eigenvalues and eigenspaces. In Section 4 we obtain
convergence estimates for eigenvalues and eigenspaces. We use the following notion of the gap between two subspaces $\cL,\mathcal{M}\subset\mathcal{H}$
\begin{displaymath}
\delta(\cL,\mathcal{M}) = \sup_{\psi\in\cL,~\Vert\psi\Vert=1}\dist[\psi,\mathcal{M}]\quad\textrm{and}\quad\hat{\delta}(\cL,\mathcal{M}) = \max\{\delta(\cL,\mathcal{M}),\delta(\mathcal{M},\cL)\}
\end{displaymath}
(see for example \cite[Section IV.2.1]{katopert}). For eigenvalues $z_1,\dots,z_m\in\sigma_{\dis}(A)$
 the corresponding linear hull of eigenspaces will be denoted $\cL(\{z_1,\dots,z_m\})$.
The main result is Theorem \ref{lim3} which applied to a self-adjoint operator $A$ with $z\in\sigma_{\dis}(A)$
yields $\dist[z,\Spec_2(A,\cL_n)] = \mathcal{O}(\delta(\cL(\{z\}),\cL_n))$. This improves upon
the previous estimate $\dist[z,\Spec_2(A,\cL_n)] = \mathcal{O}(\delta(\cL(\{z\}),\cL_n)^{\frac{1}{2}})$ (see \cite{bo2,bost}).
For a $z\in\sigma_{\dis}(A)$ we therefore have a sequence $z_n\in\Spec_2(A,\cL_n)$
with $\vert z_n - z\vert = \mathcal{O}(\delta(\cL(\{z\}),\cL_n))$. If we combine this with property \eqref{high} we obtain
$\vert \Re z_n - z\vert = \mathcal{O}(\delta(\cL(\{z\}),\cL_n)^2)$. This is the same order of convergence to
an arbitrary member of $\sigma_{\dis}(A)$ that the finite-section method achieves (see for example \cite{chat}).
Moreover, we find approximate eigenspaces $\mathcal{M}_n(\{z\})$ with
$\hat{\delta}(\cL(\{z\}),\mathcal{M}_n(\{z\})) = \mathcal{O}(\delta(\cL(\{z\}),\cL_n))$. Again, this is the same order of convergence
that the finite-section method achieves for eigenspaces. However, due to spectral pollution, this
convergence to eigenspaces in the finite-section method applies only to those eigenvalues outside
$\conv(\sigma_{\ess}(A))$. The section includes a simple example where the convergence rates are achieved.
In Section 5 we show that the second order spectrum provides enclosures for eigenvalues of normal operators.
The final section extends the results to unbounded operators.

\section{Geometric Preliminaries}

Throughout this section $\Sigma$ will be an arbitrary compact subset of $\mathbb{C}$. For an $\varepsilon>0$ and $z\in\mathbb{C}$,
we introduce the following sets
\begin{align*}
[\Sigma]_\varepsilon := \{z&\in\mathbb{C}:\dist[z,\Sigma]\le\varepsilon\},\quad
\Sigma_z := \{(\lambda - z)(\overline{\lambda} - z):\lambda\in\Sigma\},\\
&\quad\textrm{and}\quad\mathcal{Q}(\Sigma) := \{z\in\mathbb{C}:0\in\conv(\Sigma_z)\}.
\end{align*}
We study these sets because they will give us an insight into the geometry of the
second order relative spectrum. The sets are similar to $\Sigma_z^2 := \{(\lambda - z)^2:\lambda\in\Sigma\}$ and
$\mathcal{Q}_2(\Sigma) := \{z\in\mathbb{C}:0\in\textrm{conv}(\Sigma_z^2)\}$ which were introduced in \cite{shar}. Our reason for
studying $\Sigma_z$ and $\mathcal{Q}(\Sigma)$ - rather than  $\Sigma_z^2$ and $\mathcal{Q}_2(\Sigma)$ - is that our definition
of the second order relative spectrum differs from that used in \cite{shar}.

The assertions of the following lemma follow immediately from the definition of $\mathcal{Q}(\Sigma)$.

\begin{lemma}\label{cor0}
Let $\Sigma$ be a compact subset of $\mathbb{C}$, then $\Sigma\subset\mathcal{Q}(\Sigma)$, $\Sigma\cap\mathbb{R}=\mathcal{Q}(\Sigma)\cap\mathbb{R}$, and $z\in\mathcal{Q}(\Sigma)$ if and only if $\overline{z}\in\mathcal{Q}(\Sigma)$.
\end{lemma}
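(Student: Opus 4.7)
The lemma splits into three essentially independent assertions, each of which should follow by unpacking the definitions; the author's use of ``immediately'' is a clear hint. I would proceed in the order the statements are listed.

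For the inclusion $\Sigma\subset\mathcal{Q}(\Sigma)$, I would simply observe that if $z\in\Sigma$ then taking $\lambda=z$ gives $(\lambda-z)(\overline{\lambda}-z)=0\in\Sigma_z$, so $0\in\Sigma_z\subset\conv(\Sigma_z)$ and hence $z\in\mathcal{Q}(\Sigma)$.

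For the identity $\Sigma\cap\mathbb{R}=\mathcal{Q}(\Sigma)\cap\mathbb{R}$, the inclusion $\subset$ is immediate from what has just been shown. For the reverse inclusion, let $z\in\mathcal{Q}(\Sigma)\cap\mathbb{R}$. The point of taking $z$ real is that then
\[
(\lambda-z)(\overline{\lambda}-z)=|\lambda-z|^{2}\ge 0\qquad\text{for every }\lambda\in\mathbb{C},
\]
so $\Sigma_z$ lies in $[0,\infty)$ and hence $\conv(\Sigma_z)\subset[0,\infty)$. Since $0\in\conv(\Sigma_z)$ and any convex combination of non-negative reals equal to zero forces each contributing element to be zero, $0\in\Sigma_z$. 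Compactness of $\Sigma$ makes $\Sigma_z$ compact, but this is not even needed here; what matters is that $0\in\Sigma_z$ yields some $\lambda\in\Sigma$ with $|\lambda-z|^2=0$, i.e.\ $\lambda=z$, so $z\in\Sigma$.

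For the symmetry $z\in\mathcal{Q}(\Sigma)\iff\overline{z}\in\mathcal{Q}(\Sigma)$, the key computation is
\[
\overline{(\lambda-z)(\overline{\lambda}-z)}=(\overline{\lambda}-\overline{z})(\lambda-\overline{z})\in\Sigma_{\overline{z}},
\]
which shows $\Sigma_{\overline{z}}=\overline{\Sigma_z}$ (elementwise complex conjugation). Because complex conjugation is $\mathbb{R}$-linear and continuous, it commutes with the convex hull, so $\conv(\Sigma_{\overline z})=\overline{\conv(\Sigma_z)}$, and since $0=\overline{0}$ the equivalence follows at once. There is no real obstacle in this lemma; the only point requiring a moment's care is noting that non-negative reals summing convexly to zero must individually vanish, which is what pins $0\in\conv(\Sigma_z)$ down to $0\in\Sigma_z$ in the second assertion.
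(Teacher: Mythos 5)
Your proof is correct and is precisely the unpacking of the definitions that the paper has in mind; the paper offers no explicit argument, stating only that the assertions ``follow immediately from the definition of $\mathcal{Q}(\Sigma)$.'' The three observations you isolate---that $\lambda=z$ gives $0\in\Sigma_z$, that for real $z$ the set $\Sigma_z$ consists of the nonnegative reals $|\lambda-z|^2$ so a vanishing convex combination forces $0\in\Sigma_z$, and that $\Sigma_{\overline z}=\overline{\Sigma_z}$ together with conjugation commuting with $\conv$---are exactly what the author is pointing to.
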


Let $\lambda_1,\lambda_2\in\mathbb{C}$, then
$\mathcal{Q}(\{\lambda_j\}) = \{\lambda_j,\overline{\lambda}_j\}$ and $z\in\mathcal{Q}(\{\lambda_1,\lambda_2\})$ if and only if for some $t\in[0,1]$
we have
\begin{equation}\label{curve}
t(\vert\lambda_1\vert^2 - 2z\Re \lambda_1 + z^2) + (1-t)(\vert\lambda_2\vert^2 - 2z\Re \lambda_2 + z^2) = 0.
\end{equation}
Consider the curves $\gamma(\lambda_1,\lambda_2)^\pm(\cdot):[0,1]\to\mathbb{C}$ defined by
\begin{align*}
\gamma(\lambda_1,\lambda_2)^\pm(t) &:=  t\Re\lambda_1 + (1-t)\Re\lambda_2 \pm \sqrt{(t\Re\lambda_1 + (1-t)\Re\lambda_2)^2 -
t\vert\lambda_1\vert^2 - (1-t)\vert\lambda_2\vert^2}\\
&\:=  t\Re\lambda_1 + (1-t)\Re\lambda_2 \pm i\sqrt{t(1-t)(\Re\lambda_1- \Re\lambda_2)^2 + t(\Im\lambda_1)^2 +
(1-t)(\Im\lambda_2)^2},
\end{align*}
and set $\gamma(\lambda_1,\lambda_2)^\pm = \{z\in\mathbb{C}:z=\gamma(\lambda_1,\lambda_2)^\pm(t)\textrm{ for some }t\in[0,1]\}$.
It follows from \eqref{curve} that $\mathcal{Q}(\{\lambda_1,\lambda_2\}) = \gamma(\lambda_1,\lambda_2)^+\cup\gamma(\lambda_1,\lambda_2)^-$. If $\Re\lambda_1 = \Re\lambda_2$ then clearly $\gamma(\lambda_1,\lambda_2)^+$ is the straight line between $\Re\lambda_1 + i\vert\Im\lambda_1\vert$ and $\Re\lambda_2 + i\vert\Im\lambda_2\vert$, and $\gamma(\lambda_1,\lambda_2)^-$ is the straight line between $\Re\lambda_1 - i\vert\Im\lambda_1\vert$ and $\Re\lambda_2 - i\vert\Im\lambda_2\vert$. If $\Re\lambda_1 \ne \Re\lambda_2$ then for any $t\in[0,1]$ we find that
\begin{displaymath}
\left\vert\gamma(\lambda_1,\lambda_2)^\pm(t)-\frac{\vert\lambda_2\vert^2-\vert\lambda_1\vert^2}{2(\Re\lambda_2 - \Re\lambda_1)}\right\vert^2 = \frac{(\vert\lambda_2\vert^2-\vert\lambda_1\vert^2)^2}{4(\Re\lambda_2 - \Re\lambda_1)^2} -\Re\lambda_2\frac{\vert\lambda_2\vert^2-\vert\lambda_1\vert^2}{(\Re\lambda_2 - \Re\lambda_1)}  + \vert\lambda_2\vert^2,
\end{displaymath}
therefore $\gamma(\lambda_1,\lambda_2)^\pm$ are arcs of the circle with center $c$ and radius $r$ where
\begin{equation}\label{circles}
c=\frac{\vert\lambda_2\vert^2-\vert\lambda_1\vert^2}{2(\Re\lambda_2 - \Re\lambda_1)}\quad\textrm{and}\quad r^2=\frac{(\vert\lambda_2\vert^2-\vert\lambda_1\vert^2)^2}{4(\Re\lambda_2 - \Re\lambda_1)^2} -\Re\lambda_2\frac{\vert\lambda_2\vert^2-\vert\lambda_1\vert^2}{(\Re\lambda_2 - \Re\lambda_1)}  + \vert\lambda_2\vert^2.
\end{equation}
If $L$ is the line segment between $\Re\lambda_1 + i\vert\Im\lambda_1\vert$ and $\Re\lambda_2 + i\vert\Im\lambda_2\vert$, then the real number  $c$ is the point where the perpendicular bisector of $L$ meets the real line. The radius $r$ is then the distance between $c$ and $\Re\lambda_1 \pm i\vert\Im\lambda_1\vert$ (and $\Re\lambda_2 \pm i\vert\Im\lambda_2\vert$). Now let $\lambda_1,\lambda_2,\lambda_3\in\Sigma$ with
$\Re\lambda_1\le\Re\lambda_2\le\Re\lambda_3$. It follows that either $\lambda_2\in\gamma(\lambda_1,\lambda_3)^-\cup\gamma(\lambda_1,\lambda_3)^+$ and
$\gamma(\lambda_1,\lambda_3)^\pm= \gamma(\lambda_1,\lambda_2)^\pm\cup\gamma(\lambda_2,\lambda_3)^\pm$, or $\lambda_2\notin\gamma(\lambda_1,\lambda_3)^-\cup\gamma(\lambda_1,\lambda_3)^+$ and
\begin{align*}
\gamma(\lambda_1,\lambda_3)^+\cap \big(\gamma(\lambda_1,\lambda_2)^+\cup\gamma(\lambda_2,\lambda_3)^+\big)& =
\big\{\Re\lambda_1+i\vert\Im\lambda_1\vert,\Re\lambda_3+i\vert\Im\lambda_3\vert\big\},\\
\gamma(\lambda_1,\lambda_3)^-\cap \big(\gamma(\lambda_1,\lambda_2)^-\cup\gamma(\lambda_2,\lambda_3)^-\big) &=
\big\{\Re\lambda_1-i\vert\Im\lambda_1\vert,\Re\lambda_3-i\vert\Im\lambda_3\vert\big\}.
\end{align*}
Let $q(\lambda_1,\lambda_2,\lambda_3)^\pm:=\gamma(\lambda_1,\lambda_2)^\pm\cup\gamma(\lambda_2,\lambda_3)^\pm\cup
\gamma(\lambda_1,\lambda_3)^\pm$, then $q(\lambda_1,\lambda_2,\lambda_3)^+$ and $q(\lambda_1,\lambda_2,\lambda_3)^-$
are simple closed curves. We denote the closed interiors of these curves by
$\overline{\intr}(q(\lambda_1,\lambda_2,\lambda_3)^\pm)$. Figures 1-3 show $\overline{\intr}(q(\lambda_1,\lambda_2,\lambda_3)^\pm)$
for three different situations.

\begin{figure}[h!]
\centering
\includegraphics[scale=.8]{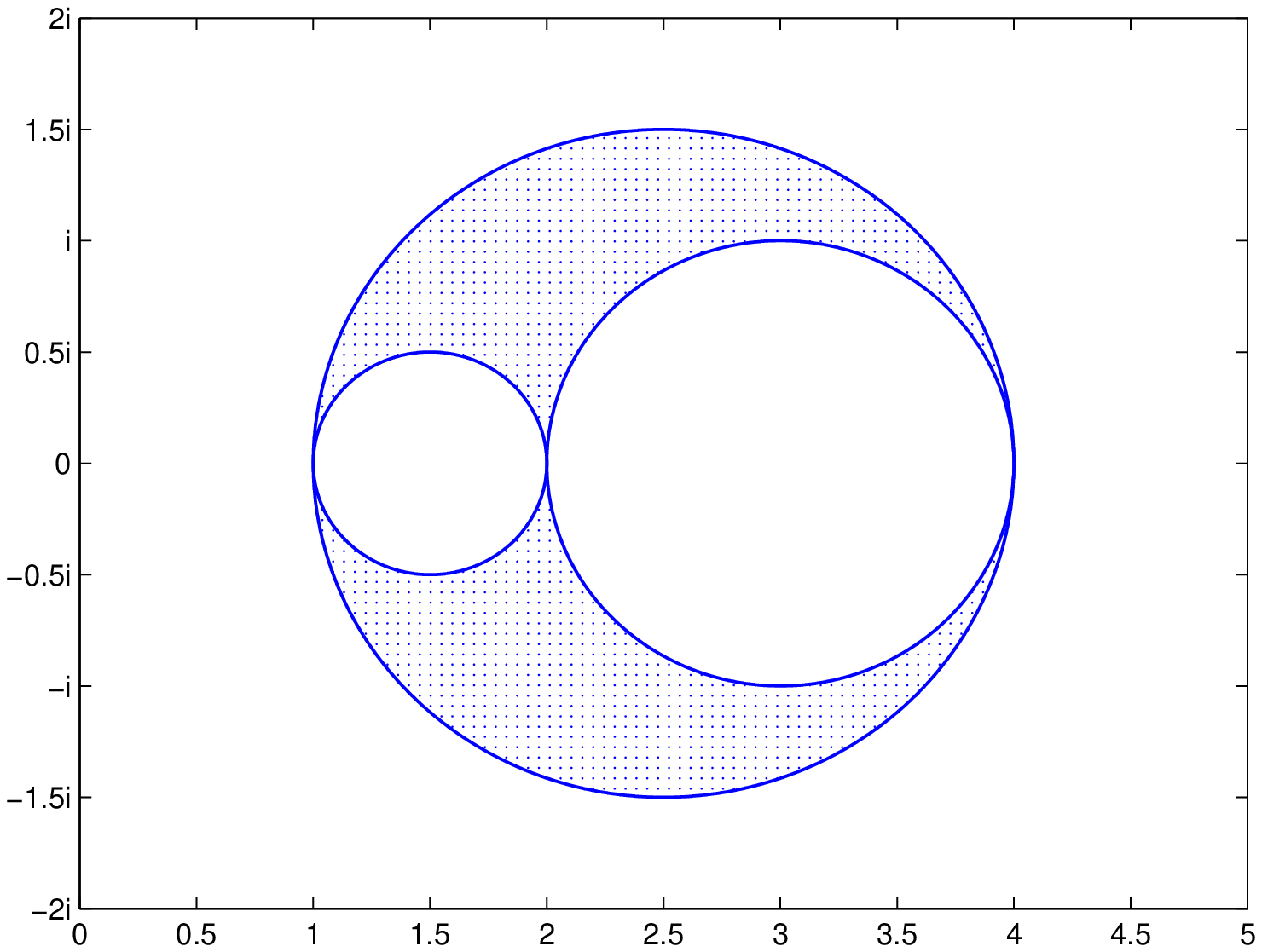}
\caption{The shaded region above and below the real line are $\overline{\intr}(q(1,2,4)^+)$ and $\overline{\intr}(q(1,2,4)^-)$, respectively. The regions are enclosed by the arcs $\gamma(1,2)^\pm$, $\gamma(1,4)^\pm$ and $\gamma(2,4)^\pm$.}
\end{figure}

\begin{figure}[h!]
\centering
\includegraphics[scale=.8]{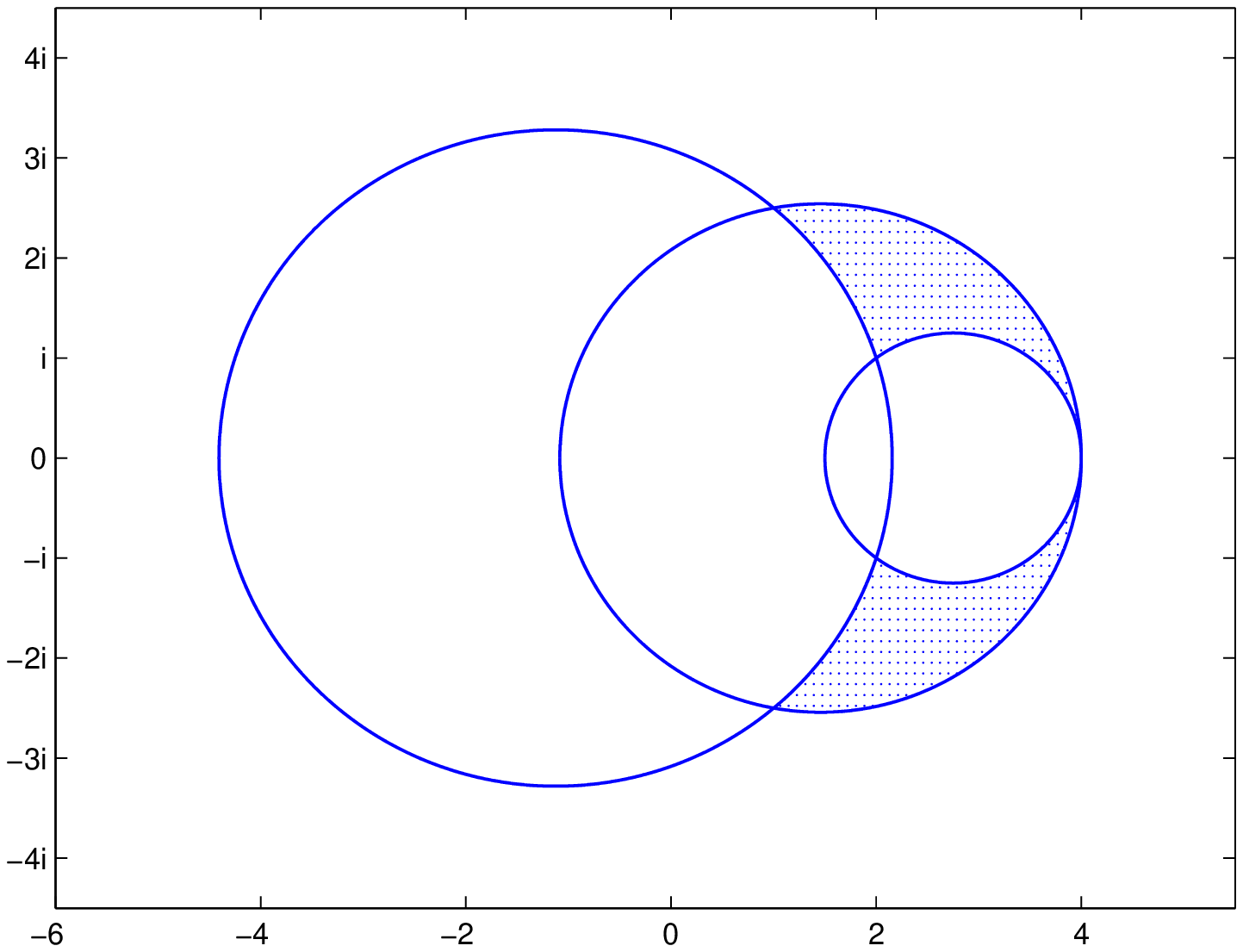}
\caption{The shaded region above and below the real line are $\overline{\intr}(q(1+2.5i,2+i,4)^+)$ and $\overline{\intr}(q(1+2.5i,2+i,4)^-)$, respectively. The regions are enclosed by the arcs $\gamma(1+2.5i,2+i)^\pm$, $\gamma(1+2.5i,4)^\pm$ and $\gamma(2+i,4)^\pm$.}
\end{figure}

\begin{figure}[h!]
\centering
\includegraphics[scale=.8]{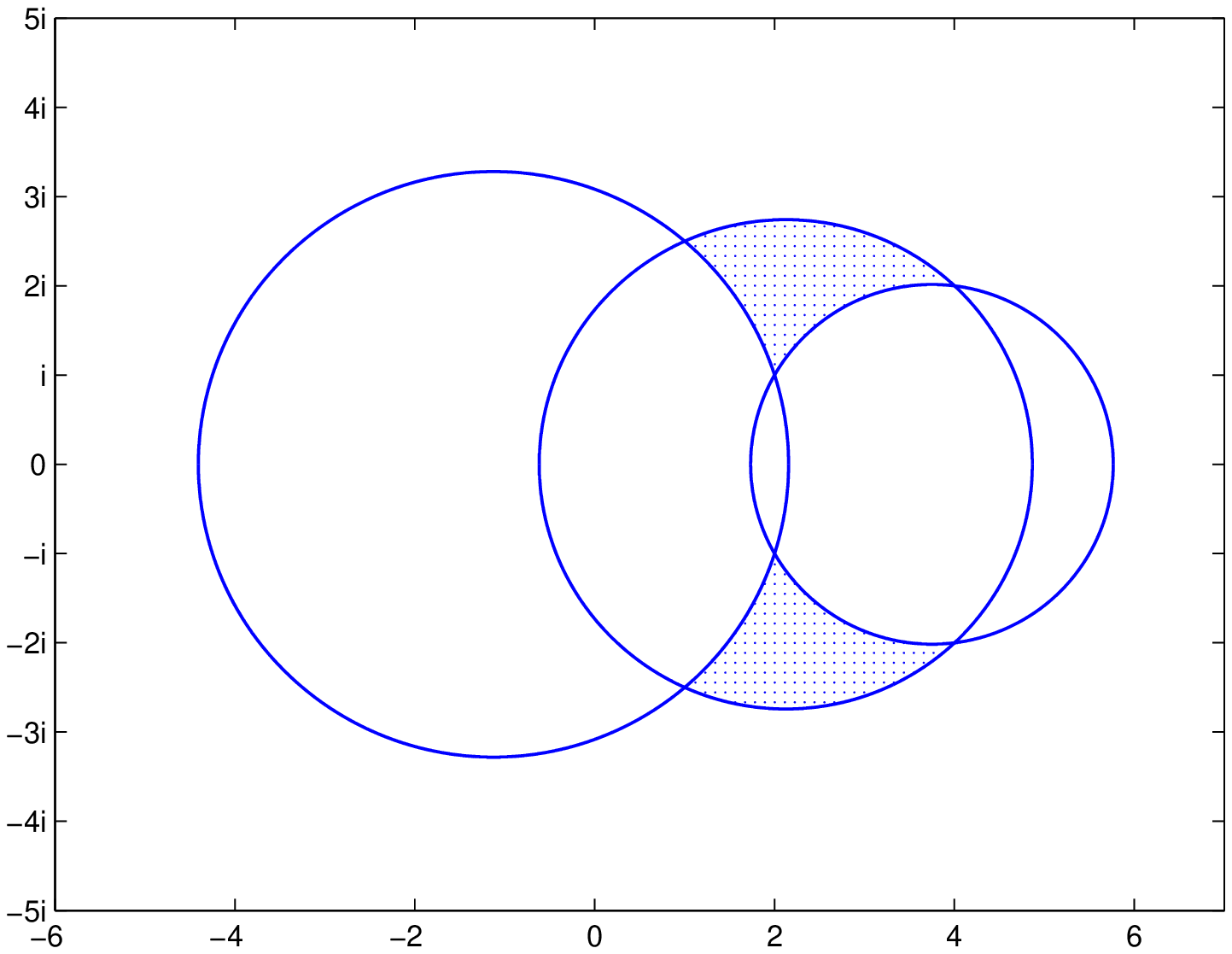}
\caption{The shaded region above and below the real line are $\overline{\intr}(q(1+2.5i,2+i,4+2i)^+)$ and $\overline{\intr}(q(1+2.5i,2+i,4+2i)^-)$, respectively. The regions are enclosed by the arcs $\gamma(1+2.5i,2+i)^\pm$, $\gamma(1+2.5i,4+2i)^\pm$ and $\gamma(2+i,4)^\pm$.}
\end{figure}

\begin{theorem}\label{thm1}
Let $\Sigma$ be a compact subset of $\mathbb{C}$, then
\begin{equation}\label{Q}
\mathcal{Q}(\Sigma) = \Big\{z\in\mathbb{C}:~\textrm{there exist~}\lambda_1,\lambda_2,\lambda_3\in\Sigma\textrm{~such that~}
z\in\overline{\intr}(q(\lambda_1,\lambda_2,\lambda_3)^+)\cup\overline{\intr}(q(\lambda_1,\lambda_2,\lambda_3)^-)\Big\}.
\end{equation}
\end{theorem}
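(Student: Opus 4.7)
The plan is to combine Carath\'eodory's theorem in $\R^2\cong\C$ with the geometric description of the arcs $\gamma(\lambda_i,\lambda_j)^\pm$ already derived above. Write $p_\lambda(z):=(\lambda-z)(\overline{\lambda}-z)$, so that $\Sigma_z=\{p_\lambda(z):\lambda\in\Sigma\}$ and $z\in\cQ(\Sigma)$ exactly when $0\in\conv\{p_\lambda(z):\lambda\in\Sigma\}$.

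For the inclusion $\subseteq$, fix $z\in\cQ(\Sigma)$. Since $\conv(\Sigma_z)$ lies in the two-real-dimensional space $\C$, Carath\'eodory's theorem furnishes $\lambda_1,\lambda_2,\lambda_3\in\Sigma$ (not necessarily distinct) and weights $t_j\ge 0$ with $t_1+t_2+t_3=1$ and $\sum_j t_j p_{\lambda_j}(z)=0$. I would then split by the number of vanishing $t_j$. If two vanish, some $p_{\lambda_i}(z)=0$ gives $z\in\{\lambda_i,\overline{\lambda}_i\}$, a common endpoint of the arcs $\gamma(\lambda_i,\cdot)^\pm$ and hence a point of $q^\pm$. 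If exactly one vanishes, $0$ is on a segment between two of the $p_{\lambda_j}(z)$ and \eqref{curve} places $z$ on $\gamma(\lambda_i,\lambda_j)^+\cup\gamma(\lambda_i,\lambda_j)^-\subseteq q^\pm$. If all three $t_j>0$, then $0$ lies strictly inside the triangle spanned by $p_{\lambda_1}(z),p_{\lambda_2}(z),p_{\lambda_3}(z)$, and one must show $z\in\intr(q^+)\cup\intr(q^-)$, which I address below.

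For the reverse inclusion $\supseteq$, any $z$ on an arc $\gamma(\lambda_i,\lambda_j)^\pm$ satisfies \eqref{curve}, so $z\in\cQ(\{\lambda_i,\lambda_j\})\subseteq\cQ(\Sigma)$ directly from the definition of $\cQ$. For $z\in\intr(q^+)\cup\intr(q^-)$, consider the closed set $T:=\{w\in\C:0\in\conv\{p_{\lambda_1}(w),p_{\lambda_2}(w),p_{\lambda_3}(w)\}\}$. Continuity of $w\mapsto(p_{\lambda_1}(w),p_{\lambda_2}(w),p_{\lambda_3}(w))$ shows that at any boundary point of $T$ either some $p_{\lambda_j}(w)=0$ or $0$ lies on an edge of the triangle, and in both cases the reasoning of the forward direction forces $w\in q^+\cup q^-$; hence $\partial T\subseteq q^+\cup q^-$. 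Applying the Jordan curve theorem to the simple closed curves $q^+$ and $q^-$ separately, $T$ is a union of closures of connected components of $\C\setminus(q^+\cup q^-)$. The unbounded component cannot lie in $T$: for $|w|$ large, $p_{\lambda_j}(w)=w^2+O(|w|)$ uniformly in $j$, so $\conv\{p_{\lambda_j}(w)\}$ sits in a disc of radius $O(|w|)$ around $w^2$ and misses $0$. Testing a point inside each of $\intr(q^+)$ and $\intr(q^-)$ (for instance, approaching $\lambda_i$ from within the component, where $p_{\lambda_i}\to 0$ causes $0$ to cross an edge into the triangle) identifies these as precisely the bounded components in $T$.

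The main technical obstacle is the geometric bookkeeping across the three arrangements illustrated in Figures 1--3, corresponding to whether $\lambda_2\in\gamma(\lambda_1,\lambda_3)^+\cup\gamma(\lambda_1,\lambda_3)^-$ or not, together with the degenerate situations (e.g.\ $\Re\lambda_i=\Re\lambda_j$, where the arcs become line segments, or real $\lambda_i$, where $q^+$ and $q^-$ may share endpoints on the real axis). In each configuration one must verify that no exterior component of $\C\setminus(q^+\cup q^-)$ sneaks into $T$. The $z\leftrightarrow\overline{z}$ symmetry recorded in Lemma \ref{cor0} pairs $q^+$ with $q^-$ and halves the case-work.
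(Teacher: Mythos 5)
Your proposal takes a genuinely different route from the paper. Both begin by applying Carath\'eodory's theorem to isolate a triple $\lambda_1,\lambda_2,\lambda_3$, but from there the paper argues by an explicit coordinatised computation: after writing $z$ with barycentric parameters $(\hat s,\hat t)$ it constructs a path $z(t)$ with $\Re z(t)\equiv\Re z$ constant and $\Im z(t)$ monotone, shows $z(t_1)\in\gamma(\lambda_1,\lambda_2)^+$ and $z(t_0)\in\gamma(\lambda_1,\lambda_3)^+$, and then verifies $\hat t\in[t_1,t_0]$, pinning $z$ inside $\overline{\intr}(q^+)$ constructively. You instead introduce the closed set $T=\{w:0\in\conv\{p_{\lambda_1}(w),p_{\lambda_2}(w),p_{\lambda_3}(w)\}\}$, observe $\partial T\subset q^+\cup q^-$, and try to identify $T$ by testing components of $\C\setminus(q^+\cup q^-)$ and appealing to the Jordan curve theorem. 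This is a legitimate alternative strategy, more qualitative and less computational than the paper's.

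As written, though, your decisive step has a genuine gap. You conclude by "testing a point inside each of $\intr(q^+)$ and $\intr(q^-)$," which determines $T$ only if those are the \emph{only} bounded components of $\C\setminus(q^+\cup q^-)$. They are not: already in the configuration of Figure 1 ($\lambda_1=1$, $\lambda_2=2$, $\lambda_3=4$) the full circles $\gamma(1,2)^+\cup\gamma(1,2)^-$ and $\gamma(2,4)^+\cup\gamma(2,4)^-$ enclose two additional bounded components (the open discs of radius $1/2$ about $3/2$ and radius $1$ about $3$), and the content of the theorem includes showing these do \emph{not} lie in $T$ (a sample check gives, e.g., $1.5+0.1i\notin T$). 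Your argument never performs this verification, and it must be done uniformly over all triples and all configurations. You in fact flag the issue yourself---"one must verify that no exterior component of $\C\setminus(q^+\cup q^-)$ sneaks into $T$"---but only as a "technical obstacle" left unresolved, and that obstacle is precisely the substance of the theorem; without it the proof is incomplete. The positive test ("approach $\lambda_i$ from within the component, where $p_{\lambda_i}\to0$ causes $0$ to cross into the triangle") is also under-specified: $\lambda_i\in\partial T$, so whether $0$ enters $\conv\{p_{\lambda_j}(w)\}$ as $w\to\lambda_i$ depends on the direction of approach (write $w=\lambda_i+\varepsilon e^{i\theta}$ and track the sign conditions), and you would need to argue that approach directions lying in $\intr(q^\pm)$ are exactly the ones that land in $T$, which again is essentially the geometric fact to be proved.
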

\begin{proof}
Let $z$ belong to the left hand side of \eqref{Q} and without loss of generality suppose that $\Im z\ge 0$. It follows from the definition of $\mathcal{Q}(\Sigma)$ that there exist $\lambda_1,\lambda_2,\lambda_3\in\Sigma$  where $\Re\lambda_1\le\Re\lambda_2\le\Re\lambda_3$, such that
\begin{equation}\label{prebary}
0\in\conv(\vert\lambda_1\vert^2 - 2z\Re\lambda_1+z^2,\vert\lambda_2\vert^2 - 2z\Re\lambda_2+z^2,\vert\lambda_3\vert^2 - 2z\Re\lambda_3+z^2).
\end{equation}
From \eqref{prebary} it follows that for some $\hat{s},\hat{t}\in[0,1]$ we have
\begin{displaymath}
0 = \hat{t}(\vert\lambda_1\vert^2 - 2z\Re\lambda_1+z^2)+(1-\hat{t})\hat{s}(\vert\lambda_2\vert^2 - 2z\Re\lambda_2+z^2)+(1-\hat{t})(1-\hat{s})(\vert\lambda_3\vert^2 - 2z\Re\lambda_3+z^2),
\end{displaymath}
from which we obtain
\begin{align*}
\Re z &= \hat{t}\Re\lambda_1 + (1-\hat{t})\hat{s}\Re\lambda_2 + (1-\hat{t})(1-\hat{s})\Re\lambda_3\\
\Im z &= \sqrt\big(\hat{s}\hat{t}(1-\hat{t})(\Re\lambda_1 - \Re\lambda_2)^2 + \hat{t}(1-\hat{t})(1-\hat{s})(\Re\lambda_1-\Re\lambda_3)^2\\
&~~~~~~ +(1-\hat{t})^2\hat{s}(1-\hat{s})(\Re\lambda_2 - \Re\lambda_3)^2+\hat{t}(\Im\lambda_1)^2
+ (1-\hat{t})\hat{s}(\Im\lambda_2)^2 + (1-\hat{t})(1-\hat{s})(\Im\lambda_3)^2\big).
\end{align*}
For some $t_0\in[0,1]$ we have $\Re z = t_0\Re\lambda_1 + (1-t_0)\Re\lambda_3$. We assume that $\Re\lambda_1<\Re\lambda_2<\Re\lambda_3$, the case where $\Re\lambda_1=\Re\lambda_2$ and/or $\Re\lambda_2=\Re\lambda_3$ being treated similarly. If $t_0=1$, then $\hat{t} = 1$ and $\hat{s}=0$, so that $z = \lambda_1\in\gamma(\lambda_1,\lambda_2)^+\subset \overline{\intr}(q(\lambda_1,\lambda_2,\lambda_3)^+)$. Similarly, if $t_0=0$ we have $z = \lambda_3 \in \overline{\intr}(q(\lambda_1,\lambda_2,\lambda_3)^+)$. Suppose now that $t_0\in(0,1)$. We assume that $\Re z\in[\Re\lambda_1,\Re\lambda_2]$, the case were $\Re z\in[\Re\lambda_2,\Re\lambda_3]$ being treated similarly. Define
\begin{displaymath}
s(t) = \frac{(t_0-t)(\Re\lambda_1-\Re\lambda_3)}{(1-t)(\Re\lambda_2-\Re\lambda_3)}\quad\textrm{and}\quad t_1 = \frac{\Re z - \Re\lambda_2}{\Re\lambda_1 - \Re\lambda_2},
\end{displaymath}
and consider $z(t)\in\mathbb{C}$ where
\begin{align*}
\Re z(t) &=  t\Re\lambda_1 + (1-t)s(t)\Re\lambda_2 + (1-t)(1-s(t))\Re\lambda_3\\
\Im z(t) &= \sqrt\big(s(t)t(1-t)(\Re\lambda_1 - \Re\lambda_2)^2 + t(1-t)(1-s(t))(\Re\lambda_1-\Re\lambda_3)^2\\
&~~~~~~~~~~~ +(1-t)^2s(t)(1-s(t))(\Re\lambda_2 - \Re\lambda_3)^2 +t(\Im\lambda_1)^2\\
&~~~~~~~~~~~~~~~~~ + (1-t)s(t)(\Im\lambda_2)^2+ (1-t)(1-s(t))(\Im\lambda_3)^2\big).
\end{align*}
It is straightforward to verify that $t_1\le t_0$, $z(t_1)\in\gamma(\lambda_1,\lambda_2)^+$, $z(t_0)\in\gamma(\lambda_1,\lambda_3)^+$, $\Re z(t) = \Re z$ and $0\le s(t)\le 1$ for all $t\in[t_1,t_0]$. Also, we have
\begin{displaymath}
\frac{\textrm{d}}{\textrm{dt}}\Im z(t) = \bigg(-\vert\lambda_1\vert^2 + \frac{\Re\lambda_1 - \Re\lambda_3}{\Re\lambda_2 - \Re\lambda_3}\vert\lambda_2\vert^2 + \frac{\Re\lambda_2 - \Re\lambda_1}{\Re\lambda_2 - \Re\lambda_3}\vert\lambda_3\vert^2\Bigg)\bigg/\Im z(t).
\end{displaymath}
In particular we note that for $t\in(t_1,t_0)$ the sign of the derivative does not change. It follows that $z(t)\in\overline{\intr}(q(\lambda_1,\lambda_2,\lambda_3)^+)$ for all $t\in[t_1,t_0]$. It will now suffice to show that $\hat{t}\in[t_1,t_0]$. If $\hat{t}>t_0$, then
\begin{align*}
\Re z &= \hat{t}\Re\lambda_1 + (1-\hat{t})(\hat{s}\Re\lambda_2 + (1-\hat{s})\Re\lambda_3)\\
&< t_0\Re\lambda_1 + (1-t_0)(\hat{s}\Re\lambda_2 + (1-\hat{s})\Re\lambda_3)\\
&<t_0\Re\lambda_1 + (1-t_0)\Re\lambda_3
\end{align*}
which is a contradiction since the right hand side equals $\Re z$. If $\hat{t}<t_1$, then
\begin{align*}
\Re z &= \hat{t}\Re\lambda_1 + (1-\hat{t})(\hat{s}\Re\lambda_2 + (1-\hat{s})\Re\lambda_3)\\
&>t_1\Re\lambda_1 + (1-t_1)(\hat{s}\Re\lambda_2 + (1-\hat{s})\Re\lambda_3)\\
&>t_1\Re\lambda_1 + (1-t_1)\Re\lambda_2,
\end{align*}
which is a contradiction since the right hand side equals $\Re z$ (since $s(t_1)=1$). We deduce that $\mathcal{Q}(\Sigma)$ is contained in the right hand side of \eqref{Q}.

Now let $z$ belong to the right and side of \eqref{Q}. Without loss of generality we suppose that $\Im z\ge 0$.
It follows from the above that for some $s,t\in[0,1]$ we have
\begin{align*}
\Re z &= t\Re\lambda_1 + (1-t)s\Re\lambda_2 + (1-t)(1-s)\Re\lambda_3\\
\Im z &= \sqrt\big(st(1-t)(\Re\lambda_1 - \Re\lambda_2)^2 + t(1-t)(1-s)(\Re\lambda_1-\Re\lambda_3)^2\\
&~~~~~~ +(1-t)^2s(1-s)(\Re\lambda_2 - \Re\lambda_3)^2+t(\Im\lambda_1)^2
+ (1-t)s(\Im\lambda_2)^2 + (1-t)(1-s)(\Im\lambda_3)^2\big).
\end{align*}
Therefore
\begin{displaymath}
0 = t(\vert\lambda_1\vert^2 - 2z\Re\lambda_1+z^2)+(1-t)s(\vert\lambda_2\vert^2 - 2z\Re\lambda_2+z^2)+(1-t)(1-s)(\vert\lambda_3\vert^2 - 2z\Re\lambda_3+z^2),
\end{displaymath}
so that $0\in\conv(\vert\lambda_1\vert^2 - 2z\Re\lambda_1+z^2,\vert\lambda_2\vert^2 - 2z\Re\lambda_2+z^2,\vert\lambda_3\vert^2 - 2z\Re\lambda_3+z^2)$,
and $z\in\mathcal{Q}(\Sigma)$.
\end{proof}

\begin{corollary}\label{cor2}
Let $\varepsilon>0$,  then $\mathcal{Q}([\Sigma]_\varepsilon)=[\mathcal{Q}(\Sigma)]_\varepsilon$ and
$\dist(0,\conv(\Sigma_z))\ge\varepsilon^2$ for any $z\notin[\mathcal{Q}(\Sigma)]_\varepsilon$.
\end{corollary}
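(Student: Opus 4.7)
The plan is to treat both assertions through a single Carath\'eodory-plus-quadratic device and to separate them only at the end. For the second assertion, I would argue by contradiction: suppose $\dist(0,\conv(\Sigma_z))<\eps^2$. Applying Carath\'eodory's theorem in $\R^2\cong\C$ to the convex hull, I can write $\zeta:=\sum_{j=1}^{3}t_j(\lam_j-z)(\bar\lam_j-z)$ with $\lam_j\in\Sigma$, $t_j\ge 0$, $\sum_j t_j=1$, and $|\zeta|<\eps^2$. Form the real polynomial $g(x):=\sum_j t_j(\lam_j-x)(\bar\lam_j-x)=x^2-2\alp x+\bet$ with $\alp:=\sum_j t_j\Re\lam_j$ and $\bet:=\sum_j t_j|\lam_j|^2$; Cauchy--Schwarz gives $\bet\ge\alp^2$, so the two complex-conjugate roots $w_\pm:=\alp\pm i\sqrt{\bet-\alp^2}$ both lie in $\cQ(\Sigma)$ because $g(w_\pm)=0$ by construction. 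Since $g(z)=\zeta$ and $g(x)=(x-w_+)(x-w_-)$, the identity $(z-w_+)(z-w_-)=\zeta$ forces $\min(|z-w_+|,|z-w_-|)\le\sqrt{|\zeta|}<\eps$, contradicting $z\notin[\cQ(\Sigma)]_\eps$.

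For the inclusion $\cQ([\Sigma]_\eps)\subseteq[\cQ(\Sigma)]_\eps$, take $z\in\cQ([\Sigma]_\eps)$ with a Carath\'eodory representation $0=\sum_j t_j(\mu_j-z)(\bar\mu_j-z)$, $\mu_j\in[\Sigma]_\eps$, and choose $\lam_j\in\Sigma$ with $|\mu_j-\lam_j|\le\eps$. Define $\alp,\bet$ from these $\lam_j$'s and set $w:=\alp+i\sig$ with $\sig:=\sqrt{\bet-\alp^2}$, the sign chosen so that $\sig\Im z\ge 0$; then $w\in\cQ(\Sigma)$. The key claim is $|z-w|^2\le\sum_j t_j|\mu_j-\lam_j|^2$, and hence $\le\eps^2$. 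Expanding $|z-w|^2=(\Re z-\alp)^2+(\Im z-\sig)^2$ and substituting $\Re z=\sum_j t_j\Re\mu_j$ together with $|z|^2=\sum_j t_j|\mu_j|^2$ reduces this to the inequality $\sqrt{(\sig^2-S_\lam^2)((\Im z)^2-S_\mu^2)}+|S_\lam S_\mu|\le\sig\Im z$, where $S_\lam:=\sum_j t_j\Im\lam_j$ and $S_\mu:=\sum_j t_j\Im\mu_j$; this is a direct instance of the elementary inequality $\sqrt{AC}+\sqrt{BD}\le\sqrt{(A+B)(C+D)}$ applied with $A=\sig^2-S_\lam^2$, $B=S_\lam^2$, $C=(\Im z)^2-S_\mu^2$, $D=S_\mu^2$, after bounding the covariances of $\Re\lam,\Re\mu$ and $\Im\lam,\Im\mu$ by Cauchy--Schwarz.

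For the reverse inclusion $[\cQ(\Sigma)]_\eps\subseteq\cQ([\Sigma]_\eps)$, given $w\in\cQ(\Sigma)$ with $0=\sum_j t_j(\lam_j-w)(\bar\lam_j-w)$ and $z$ with $|z-w|\le\eps$, I would exhibit $\mu_j\in[\Sigma]_\eps$ (and possibly new weights) for which $z$ is a root of the corresponding quadratic. The natural candidate $\mu_j:=\lam_j+(z-w)$ satisfies $|\mu_j-\lam_j|=|z-w|\le\eps$ and transforms the identity cleanly whenever $z-w\in\R$, but for $\Im(z-w)\ne 0$ a direct expansion leaves the residual $-2i\,\Im(z-w)\bigl(\sum_j t_j\lam_j-w\bigr)$. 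The main technical obstacle is then eliminating this residual by perturbing each $\mu_j$ individually in the imaginary direction (and, if necessary, adjusting the weights $t_j$) without violating $|\mu_j-\lam_j|\le\eps$; the strict Cauchy--Schwarz gap $\sig^2\ge(\sum_j t_j\Im\lam_j)^2$ available from the previous paragraph is precisely the geometric slack one exploits to carry this out, and I expect this constructive step, not the one-sided estimate, to be the most delicate part of the proof.
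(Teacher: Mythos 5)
Your proof of the second assertion and of the inclusion $\mathcal{Q}([\Sigma]_\varepsilon)\subseteq[\mathcal{Q}(\Sigma)]_\varepsilon$ is correct. The second part is essentially the paper's argument: write the convex combination as a monic real quadratic $g$, observe that its complex--conjugate roots lie in $\mathcal{Q}(\Sigma)$ by construction, and conclude from $|g(z)|=|z-w_+||z-w_-|<\varepsilon^2$ that $z$ is within $\varepsilon$ of one root. (The paper uses only two $\lambda_j$'s, implicitly because the nearest point of the planar convex hull to the origin lies on a segment joining two points of $\Sigma_z$, but Carath\'eodory with three points works just as well.) Your proof of the $\subseteq$ inclusion is a genuinely different and arguably cleaner route than the paper's: instead of invoking the geometric description of Theorem~\ref{thm1} and the ``evident'' containment of perturbed arcs, you establish the sharp Lipschitz bound $|z-w|^2\le\sum_j t_j|\mu_j-\lambda_j|^2$ directly from the root formulas. (A slightly tidier way to organize the estimate is to write $\sigma^2=\mathrm{Var}(\Re\lambda)+\sum t_j(\Im\lambda_j)^2$ and $\sigma'^2=\mathrm{Var}(\Re\mu)+\sum t_j(\Im\mu_j)^2$, apply $\sqrt{(A+B)(C+D)}\ge\sqrt{AC}+\sqrt{BD}$ with $A=\mathrm{Var}(\Re\lambda)$, $B=\sum t_j(\Im\lambda_j)^2$, $C=\mathrm{Var}(\Re\mu)$, $D=\sum t_j(\Im\mu_j)^2$, and then the reverse triangle inequality for the weighted $L^2$ seminorm and norm; combining with $(\alpha'-\alpha)^2\le\sum t_j(\Re\mu_j-\Re\lambda_j)^2-\mathrm{Var}(\Re\mu-\Re\lambda)$ gives the claim with no further bookkeeping.) That one inclusion is correct and is an improvement in rigor over the paper.

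The reverse inclusion $[\mathcal{Q}(\Sigma)]_\varepsilon\subseteq\mathcal{Q}([\Sigma]_\varepsilon)$ is where you have a genuine gap, and your own instinct that it is ``the most delicate part'' is right --- but the route you sketch does not close it. The residual you compute for $\mu_j=\lambda_j+(z-w)$ is correct, and it is correct that for $\Im(z-w)\neq 0$ you must perturb the $\mu_j$. However, the ``Cauchy--Schwarz slack'' $\sigma^2\ge S_\lambda^2$ goes the wrong way for this step. Concretely, if you move all three $\lambda_j$ by the \emph{same} $\delta$ with $|\delta|\le\varepsilon$ and keep the weights $t_j$, a short calculation gives $\alpha'=\alpha+\Re\delta$ and $\sigma'^2=(\sigma^2-S_\lambda^2)+(S_\lambda+\Im\delta)^2$, so that for $|\Im\delta|\le b:=\sqrt{\varepsilon^2-(\Re\delta)^2}$ the attainable maximum of $\sigma'^2$ is $\sigma^2+2b|S_\lambda|+b^2$, which is \emph{strictly less than} $(\sigma+b)^2=\sigma^2+2b\sigma+b^2$ whenever $|S_\lambda|<\sigma$. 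Since $z$ can have $\Im z$ arbitrarily close to $\sigma+\varepsilon$, a common translation cannot reach it. To increase $\sigma'$ by the full $\varepsilon$ one must increase the variance of the configuration (move the $\lambda_j$ in \emph{different} directions) or change the weights $t_j$, and doing that while respecting $|\mu_j-\lambda_j|\le\varepsilon$ and simultaneously solving the two real constraints $\Re z=\sum s_j\Re\mu_j$, $|z|^2=\sum s_j|\mu_j|^2$ is exactly the part you have not carried out. The paper avoids this by arguing geometrically from Theorem~\ref{thm1}: the boundary of $\overline{\intr}(q(\lambda_1,\lambda_2,\lambda_3)^\pm)$ consists of the arcs $\gamma(\lambda_i,\lambda_j)^\pm$, which are line segments or arcs of circles centered on $\mathbb{R}$, and one shows that moving the endpoints $\lambda_i,\lambda_j$ within $\varepsilon$-balls sweeps such an arc across its outward $\varepsilon$-collar; any $z$ in $[\mathcal{Q}(\Sigma)]_\varepsilon$ is either already inside some $\overline{\intr}(q^\pm)$ or lies on such a swept arc, hence in $\mathcal{Q}([\Sigma]_\varepsilon)$ by the characterization of Theorem~\ref{thm1}. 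Either reproduce that geometric sweep argument or replace your translation--perturbation scheme by one that genuinely controls the variance; as written the $\supseteq$ direction is unproven.
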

\begin{proof}
Let $\lambda_1,\lambda_2,\lambda_3\in\Sigma$. The region $\overline{\intr}(q(\lambda_1,\lambda_2,\lambda_3)^\pm)$ is that enclosed by $\gamma(\lambda_1,\lambda_2)^\pm$, $\gamma(\lambda_2,\lambda_3)^\pm$ and $\gamma(\lambda_1,\lambda_3)^\pm$,
where the $\gamma(\lambda_i,\lambda_j)^\pm$ are either straight lines or arcs of circles centered on the real line (see \eqref{circles}).
Evidently, $\gamma(\hat{\lambda}_i,\hat{\lambda}_j)^\pm\subset[\gamma(\lambda_i,\lambda_j)^\pm]_\varepsilon$ for any
$\hat{\lambda}_j\in\{z:\vert\lambda_j-z\vert\le\varepsilon\}$, therefore
$\mathcal{Q}([\Sigma]_\varepsilon)\subset[\mathcal{Q}(\Sigma)]_\varepsilon$ follows from Theorem \ref{thm1}.

For any $z\in[\overline{\intr}(q(\lambda_1,\lambda_2,\lambda_3)^\pm)]_\varepsilon$ we have either:
$z\in\overline{\intr}(q(\lambda_1,\lambda_2,\lambda_3)^\pm)$, or $z\in\gamma(\hat{\lambda}_i,\hat\lambda_j)^\pm$ for
$\hat{\lambda}_i\in[\lambda_i]_\varepsilon$ and $\hat{\lambda}_j\in[\lambda_j]_\varepsilon$. In either case we have
$z\in\overline{\intr}(q(\hat{\lambda}_1,\hat{\lambda}_2,\hat{\lambda}_3)^\pm)$ for some
$\hat{\lambda}_j\in[\lambda_j]_\varepsilon$, therefore
$\mathcal{Q}([\Sigma]_\varepsilon)\supset[\mathcal{Q}(\Sigma)]_\varepsilon$ follows from Theorem \ref{thm1}.

For the last assertion we suppose that $z\notin[\mathcal{Q}(\Sigma)]_\varepsilon$ and $\dist(0,\conv(\Sigma_z))<\varepsilon^2$. Then for some $\lambda_1,\lambda_2\in\Sigma$ and $t\in[0,1]$, we have
\begin{displaymath}
\varepsilon^2 > \vert t(\lambda_1-z)(\overline{\lambda}_1 - z) - (1-t)(\lambda_2-z)(\overline{\lambda}_2 - z)\vert\\
=\vert z - \gamma^+(\lambda_1,\lambda_2)(t)\vert\vert z - \gamma^-(\lambda_1,\lambda_2)(t)\vert.
\end{displaymath}
Since $\gamma^\pm(\lambda_1,\lambda_2)(t)\in\mathcal{Q}(\Sigma)$ we obtain a contradiction.
\end{proof}

\section{Linearisation}

A quadratic eigenvalue problem can be expressed as a linear eigenvalue problem for a block operator matrix, and for this reason
we consider the following operator
\begin{displaymath}
T := \left(
\begin{array}{cc}
A+A^* & -A^*A\\
I & 0
\end{array} \right):\cH\oplus\cH\to\cH\oplus\cH.
\end{displaymath}
We note that a direct calculation verifies that for any non-zero $w\in\rho(T)$ we have
\begin{equation}\label{mult0}
(T-w)^{-1} = \left(
\begin{array}{cc}
-w(A^*-wI)^{-1}(A-wI)^{-1}& (A^*-wI)^{-1}(A-wI)^{-1}A^*A\\
-(A^*-wI)^{-1}(A-wI)^{-1} & -w^{-1} + w^{-1}(A^*-wI)^{-1}(A-wI)^{-1}A^*A
\end{array} \right),
\end{equation}
For an eigenvalue $z\in\sigma_{\dis}(T)$ the corresponding spectral subspace
will be denoted by $\mathcal{M}(\{z\})$, and recall that for an eigenvalue $z\in\sigma_{\dis}(A)$
the corresponding spectral subspace
is denoted by $\cL(\{z\})$. In the statement of the following lemma we consider a $z\in\sigma_{\dis}(A)\cup\sigma_{\dis}(A^*)$,
together with the eigenspaces $\cL(\{z\})$ and $\cL(\{\overline{z}\})$. The latter is therefore the eigenspace associate to $A$ and
$\overline{z}$, so that $\cL(\{\overline{z}\})$ contains non-zero vectors if and only if $\overline{z}\in\sigma_{\dis}(A)$.

\begin{lemma}\label{eigspaces}
We have $\sigma(T) = \sigma(A)\cup\sigma(A^*)$. If $z\in\sigma_{\dis}(A)\cup\sigma_{\dis}(A^*)$ with
$\cL(\{z\})=\Span\{\phi_1,\dots,\phi_k\}$ and $\cL(\{\overline{z}\})=\Span\{\phi_{k+1},\dots,\phi_{k+m}\}$,
then $z,\overline{z}\in\sigma_{\dis}(T)$ and
\begin{align}
\mathcal{M}(\{z\}) &= \Span\left\{\left(
\begin{array}{c}
z\phi_1\\
\phi_1
\end{array} \right),\dots,\left(
\begin{array}{c}
z\phi_{k+m}\\
\phi_{k+m}
\end{array} \right)\right\}\quad\textrm{if}\quad z\notin\mathbb{R}\label{imaginary}\\
\mathcal{M}(\{z\}) &= \Span\left\{\left(
\begin{array}{c}
0\\
\phi_1
\end{array} \right),\left(
\begin{array}{c}
\phi_1\\
0
\end{array} \right),\dots,\left(
\begin{array}{c}
0\\
\phi_{k+m}
\end{array} \right),\left(
\begin{array}{c}
\phi_{k+m}\\
0
\end{array} \right)\right\}\quad\textrm{if}\quad z\in\mathbb{R}.\label{real}
\end{align}
\end{lemma}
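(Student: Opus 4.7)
My plan splits the lemma into two tasks: establishing $\sigma(T)=\sigma(A)\cup\sigma(A^*)$, and then identifying the spectral subspace $\mathcal{M}(\{z\})$.

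For the spectral equality, the inclusion $\sigma(T)\subset\sigma(A)\cup\sigma(A^*)$ is read off directly from formula \eqref{mult0}, which furnishes a bounded inverse $(T-w)^{-1}$ whenever $w\in\rho(A)\cap\rho(A^*)$ and $w\ne 0$; the point $w=0$ is handled by constructing $T^{-1}$ explicitly from $(A^*A)^{-1}$. For the reverse inclusion I use normality of $A$, which forces $\sigma(A)=\sigma_{\mathrm{ap}}(A)$ and, via the commutation $[A-w,A^*-w]=0$, the identity $\|(A-w)(A^*-w)\phi\|=\|(A^*-w)(A-w)\phi\|$. Thus for $w\in\sigma(A)\cup\sigma(A^*)$ one obtains a unit sequence $\phi_n$ with $(A-w)(A^*-w)\phi_n\to 0$, and a short computation then shows that $(w\phi_n,\phi_n)\in\cH\oplus\cH$ is an approximate eigenvector of $T$ at $w$, so $w\in\sigma(T)$.

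To identify $\mathcal{M}(\{z\})$ the key step is reduction to a finite-dimensional invariant block. Set $\cK:=\cL(\{z\})+\cL(\{\overline z\})$ (finite-dimensional, since $z$ and $\overline z$ lie in the discrete spectrum). By normality $\cK$ is reducing for both $A$ and $A^*$, so $\cK\oplus\cK$ and $\cK^\perp\oplus\cK^\perp$ are $T$-invariant and give an orthogonal decomposition $T=T_1\oplus T_2$. Applying the first part of the lemma to $A|_{\cK^\perp}$ (which is normal with $z,\overline z$ in its resolvent set) yields $z\in\rho(T_2)$; hence $z$ is isolated in $\sigma(T)$ and $\mathcal{M}(\{z\})$ equals the generalized eigenspace of $T_1$ at $z$, contained in $\cK\oplus\cK$.

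The remaining work is a $2\times 2$ matrix analysis inside $\cK\oplus\cK$. For each basis vector $\phi_i$ one has $A\phi_i=\lambda_i\phi_i$ with $\lambda_i\in\{z,\overline z\}$, and $\Span\{(\phi_i,0),(0,\phi_i)\}$ is $T_1$-invariant; on this block $T_1$ is represented by $\left(\begin{smallmatrix}\lambda_i+\overline{\lambda_i}&-|\lambda_i|^2\\1&0\end{smallmatrix}\right)$, with characteristic polynomial $(\mu-\lambda_i)(\mu-\overline{\lambda_i})$. When $z\notin\mathbb{R}$ this block has distinct eigenvalues $z,\overline z$ with $z$-eigenvector $(z\phi_i,\phi_i)$, and summing over $i$ recovers \eqref{imaginary}. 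When $z\in\mathbb{R}$ the block becomes a Jordan block with double eigenvalue $z$, so its generalized eigenspace is the full two-dimensional subspace $\Span\{(\phi_i,0),(0,\phi_i)\}$, yielding \eqref{real}. The principal bookkeeping subtlety occurs in the real case: there $\cL(\{\overline z\})=\cL(\{z\})$, so the listed enumeration $\phi_1,\dots,\phi_{k+m}$ is redundant and one must verify that $\Span\{(\phi_i,0),(0,\phi_i)\}_{i=1}^{k+m}$ correctly reproduces the generalized eigenspace of $T_1$ at $z$ of dimension $2\dim\cL(\{z\})$.
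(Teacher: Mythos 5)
Your argument is correct, and the two proofs agree on the first assertion but diverge on the identification of $\mathcal{M}(\{z\})$. For $\sigma(T)=\sigma(A)\cup\sigma(A^*)$ both proofs read the inclusion $\subset$ off an explicit resolvent formula (the paper writes down $u,v$ solving $(T-z)(u,v)=(x,y)$, which has no singularity at $w=0$ and so avoids your separate treatment of that point) and both use normality to produce approximate eigenvectors $(w\psi_n,\psi_n)$ for the inclusion $\supset$, via the observation that $T(w\psi_n,\psi_n)-w(w\psi_n,\psi_n)=(-(A-w)(A^*-w)\psi_n,0)$.

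For the spectral subspace the routes are genuinely different. The paper works directly with the Riesz projection $Q(z)=-\frac{1}{2\pi i}\oint_\Gamma(T-w)^{-1}\,dw$: it plugs the explicit resolvent \eqref{mult0} into the integral and invokes Cauchy--Goursat to show that $Q(z)$ annihilates everything orthogonal to $\Span\{(\phi_j,0),(0,\phi_j)\}$, and then exhibits the required eigenvectors. You instead observe that $\cK=\cL(\{z\})+\cL(\{\overline z\})$ is a reducing subspace for the normal $A$, so that $T=T_1\oplus T_2$ with $T_1$ finite-dimensional acting on $\cK\oplus\cK$, and that the first part applied to $A|_{\cK^\perp}$ puts $z$ in $\rho(T_2)$. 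This reduces $\mathcal{M}(\{z\})$ to the generalized eigenspace of a direct sum of explicit $2\times 2$ companion blocks $\left(\begin{smallmatrix}\lambda_i+\overline{\lambda_i}&-|\lambda_i|^2\\1&0\end{smallmatrix}\right)$ with characteristic polynomial $(\mu-\lambda_i)(\mu-\overline{\lambda_i})$, distinguishing cleanly between the simple-eigenvalue case $z\notin\mathbb{R}$ (giving \eqref{imaginary}) and the Jordan-block case $z\in\mathbb{R}$ (giving \eqref{real}). Your structural decomposition is more elementary and makes the Jordan structure at real $z$, and hence the factor $2$ in the dimension count, completely transparent; the paper's contour-integral computation is what it reuses later for the finite-section projections $\hat Q_n$, so keeping the two parallel has expository value there. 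One point worth flagging explicitly in your write-up: both proofs tacitly use that $\overline z\notin\sigma_{\ess}(A)$ as well (otherwise $z$ would lie in $\sigma_{\ess}(A^*)$, $z$ would not be isolated in $\sigma(T)$, and in your argument $A|_{\cK^\perp}$ would retain $\overline z$ in its spectrum); under the hypothesis $z\in\sigma_{\dis}(A)\cup\sigma_{\dis}(A^*)$ alone this is not automatic, and a sentence making the assumption visible would tighten the proof.
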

\begin{proof}
Let $z\in\rho(A)\cup\rho(A^*)$ and $x,y\in\cH$. If we set
\begin{displaymath}v = (A-z)^{-1}(A^*-z)^{-1}[(A+A^*-z)y - x]\quad\textrm{and}\quad u = zv+y,
\end{displaymath}
then a direct calculation shows that
\begin{displaymath}
(T-z)\left(
\begin{array}{c}
u\\
v
\end{array} \right) =
\left(
\begin{array}{c}
x\\
y
\end{array} \right),
\end{displaymath}
and therefore $\rho(T)\supset\rho(A)\cup\rho(A^*)$.

Suppose now that $z\in\sigma(A)\cup\sigma(A^*)$. Since $A$ is normal, there
exist normalised vectors $\psi_n$ such that either $(A-z)\psi_n\to 0$ or $(A^*-z)\psi_n\to 0$. It is then straightforward to show
that
\begin{displaymath}
(T-z)\left(
\begin{array}{c}
z\psi_n\\
\psi_n
\end{array} \right)\to \left(
\begin{array}{c}
0\\
0
\end{array} \right),
\end{displaymath}
and therefore $\sigma(T)\supset\sigma(A)\cup\sigma(A^*)$. The first assertion follows.

Now let $z\in\sigma_{\dis}(A)\cup\sigma_{\dis}(A^*)$. We assume that $z\notin\mathbb{R}$, the case where $z\in\mathbb{R}$ being treated
similarly. Let $\Gamma$ be a circle which does not pass through zero, and which encloses $z$ but no other member of
$\sigma(A)\cup\sigma(A^*)$. Using \eqref{mult0}, the spectral subspace
associated to $z$ is given by the range of the spectral projection
\begin{equation}\label{projection1}
Q(z) := -\frac{1}{2\pi i}\int_\Gamma\left(
\begin{array}{cc}
-w(A^*-wI)^{-1}(A-wI)^{-1}& (A^*-wI)^{-1}(A-wI)^{-1}A^*A\\
-(A^*-wI)^{-1}(A-wI)^{-1} & -w^{-1} + w^{-1}(A^*-wI)^{-1}(A-wI)^{-1}A^*A
\end{array} \right)~dw.
\end{equation}
Let $x,y,u,v\in\mathcal{H}$ with
\begin{equation}\label{orthvec}
\quad\left(
\begin{array}{c}
x\\
y
\end{array} \right)\perp\Span\left\{\left(
\begin{array}{c}
\phi_1\\
0
\end{array} \right),\left(
\begin{array}{c}
0\\
\phi_1
\end{array} \right),\dots,\left(
\begin{array}{c}
\phi_{k+m}\\
0
\end{array} \right),\left(
\begin{array}{c}
0\\
\phi_{k+m}
\end{array} \right)\right\},
\end{equation}
Using \eqref{projection1}, \eqref{orthvec} and the Cauchy-Goursat Theorem, we obtain
\begin{displaymath}
\left\langle Q(z)\left(
\begin{array}{c}
x\\
y
\end{array} \right),\left(
\begin{array}{c}
u\\
v
\end{array} \right)\right\rangle = -\frac{1}{2\pi i}\int_\Gamma\left\langle(T - w)^{-1}\left(
\begin{array}{c}
x\\
y
\end{array} \right),\left(
\begin{array}{c}
u\\
v
\end{array} \right)\right\rangle~dw = 0.
\end{displaymath}
We deduce that
\begin{displaymath}
\range(Q(z))\subseteq\Span\left\{\left(
\begin{array}{c}
\phi_1\\
0
\end{array} \right),\left(
\begin{array}{c}
0\\
\phi_1
\end{array} \right),\dots,\left(
\begin{array}{c}
\phi_{k+m}\\
0
\end{array} \right),\left(
\begin{array}{c}
0\\
\phi_{k+m}
\end{array} \right)\right\},
\end{displaymath}
and since
\begin{displaymath}
(T - z)\left(
\begin{array}{c}
z\phi_j\\
\phi_j
\end{array} \right) = \left(
\begin{array}{c}
0\\
0
\end{array} \right)\quad\textrm{and}\quad(T - \overline{z})\left(
\begin{array}{c}
\overline{z}\phi_j\\
\phi_j
\end{array} \right) = \left(
\begin{array}{c}
0\\
0
\end{array} \right)\quad\textrm{for}\quad j=1,\dots,k+m,
\end{displaymath}
the result follows.
\end{proof}

For an arbitrary finite dimensional subspace $\cL$ with corresponding orthogonal projection $P$, we consider the block operator matrix
\begin{displaymath}
S_{\cL} := \left(
\begin{array}{cc}
P(A+A^*) & -PA^*A\\
I & 0
\end{array} \right):\cL\oplus\cL\to\cL\oplus\cL.
\end{displaymath}

\begin{lemma}
Let $\cL$ be a finite dimensional subspace with corresponding orthogonal projection $P$, then $\sigma(S_{\cL}) = \Spec_2(A,\cL)$.
\end{lemma}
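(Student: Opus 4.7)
The plan is to verify the identity by a direct algebraic reduction of the linear eigenvalue problem for $S_\cL$ to the quadratic eigenvalue problem defining $\Spec_2(A,\cL)$, using that $S_\cL$ is the standard companion linearisation of the pencil $\lambda^2 I - \lambda(A+A^*) + A^*A$ compressed to $\cL$.

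First I would write out the eigenvalue equation $S_\cL\binom{u}{v} = z\binom{u}{v}$ componentwise for $u,v\in\cL$. The bottom row gives $u = zv$, and substituting this into the top row yields $P\bigl[z(A+A^*) - A^*A - z^2 I\bigr]v = 0$. Since $v\in\cL$ we have $Pv = v$, and since $A$ is normal we have $AA^* = A^*A$, so $(A-zI)(A^*-zI) = A^*A - z(A+A^*) + z^2 I$ and the equation rearranges to $P(A-zI)(A^*-zI)v = 0$. As observed immediately after Definition \ref{eugenelevitin}, for bounded $A$ the set $\Spec_2(A,\cL)$ is precisely the set of $z$ for which $P(A^*-zI)(A-zI)\phi = 0$ has a nontrivial solution $\phi\in\cL$; by normality the operators $(A-zI)(A^*-zI)$ and $(A^*-zI)(A-zI)$ coincide, so this is the same condition.

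The remaining work is to match the nonzero-vector quantifiers on the two sides by a short case check. For $z\ne 0$: if $\phi\in\cL\setminus\{0\}$ satisfies $P(A-zI)(A^*-zI)\phi = 0$, then $(z\phi,\phi)$ is a nonzero vector in $\cL\oplus\cL$ which one verifies is an eigenvector of $S_\cL$; conversely, any eigenvector $(u,v)$ of $S_\cL$ at $z$ must satisfy $u = zv$, so $v = 0$ would force $u = 0$, hence $v\ne 0$ and $v$ solves the quadratic problem. For $z = 0$: the eigenvalue equation reduces to $u = 0$ and $PA^*A v = 0$, and this admits a nontrivial solution precisely when $0\in\Spec_2(A,\cL)$. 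Since $\cL$ is finite dimensional, $S_\cL$ acts on a finite dimensional space, so $\sigma(S_\cL)$ consists entirely of eigenvalues and the identification is complete.

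I do not expect any genuine obstacle here; the content is essentially bookkeeping around the standard linearisation of a quadratic matrix pencil, with normality of $A$ supplying the factorisation $\lambda^2 - (A+A^*)\lambda + A^*A = (\lambda I - A)(\lambda I - A^*)$. The one place to be careful is the degeneracy at $z = 0$, where the substitution $u = zv$ does not directly recover $v$, but this is handled by extracting the two block equations separately as above.
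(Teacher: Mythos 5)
Your argument is correct and follows the same companion-linearisation route as the paper: read $u=zv$ off the bottom row, substitute into the top row to obtain $P\bigl(A^*A - z(A+A^*) + z^2 I\bigr)v=0$, and identify this (via normality) with the quadratic eigenvalue problem defining $\Spec_2(A,\cL)$. The separate treatment of $z=0$ is harmless but unnecessary, since $u=zv$ together with $(u,v)\neq(0,0)$ already forces $v\neq 0$ for every $z$.
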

\begin{proof}
Let $z\in\sigma(S_{\cL})$, then there exist $\phi,\psi\in\cL$ such that
\begin{displaymath}
S_{\cL}\left(\begin{array}{c}
\psi\\
\phi
\end{array} \right)
=z\left(\begin{array}{c}
\psi\\
\phi
\end{array} \right)\quad\textrm{and}\quad\left(\begin{array}{c}
\psi\\
\phi
\end{array} \right)\ne\left(\begin{array}{c}
0\\
0
\end{array} \right).
\end{displaymath}
Therefore $\psi=z\phi$ and hence $zP(A+A^*)\phi - PA^*A\phi = z^2\phi$. It follows that
$\langle(A - z)\phi,(A-\overline{z})\psi\rangle = 0$ for all $\psi\in\cL$, and therefore $z\in\Spec_2(A,\cL)$.

Let $z\in\Spec_2(A,\cL)$, then there exists a $\phi\in\cL\backslash\{0\}$ such that $\langle(A - z)\phi,(A-\overline{z})\psi\rangle = 0$ for all $\psi\in\cL$.
It follows that $PA^*A\phi - P(A+A^*)\phi + z^2\phi = 0$ so that
\begin{displaymath}
S_{\cL}\left(\begin{array}{c}
z\phi\\
\phi
\end{array} \right)
=z\left(\begin{array}{c}
z\phi\\
\phi
\end{array} \right),
\end{displaymath}
and therefore $z\in\sigma(S_{\cL})$.
\end{proof}

It will be useful to note that for any non-zero $w\in\rho(S_{\cL})$ we have
\begin{equation}\label{proTn}
(S_{\cL}-w)^{-1}=\left(\begin{array}{cc}
-w[P(A^*-wI)(A-wI)]^{-1}& [P(A^*-wI)(A-wI)]^{-1}PA^*A\\
-[P(A^*-wI)(A-wI)]^{-1} & -w^{-1} + w^{-1}[P(A^*-wI)(A-wI)]^{-1}PA^*A
\end{array} \right).
\end{equation}
For a basis $\{\psi_1,\dots,\psi_d\}$ of $\cL$, we consider the matrices
\begin{equation}\label{matrices0}
B_{i,j} = \langle A\psi_j,A\psi_i\rangle,\quad L_{i,j} = \langle(A+A^*)\psi_j,\psi_i\rangle,\quad\textrm{and}\quad
M_{i,j} = \langle \psi_j,\psi_i\rangle.
\end{equation}
The matrices $B,L$ and $M$ each defines an operator on $\cL$ in a natural way:
\begin{equation}\label{matrices1}
B\psi = \sum_{i}\langle A\psi,A\psi_i\rangle\psi_i,\quad L\psi = \sum_{i}\langle(A+A^*)\psi,\psi_i\rangle\psi_i,\quad
\textrm{and}\quad M\psi = \sum_{i}\langle \psi,\psi_i\rangle\psi_i.
\end{equation}
We note that
\begin{equation}\label{matrices2}
S_{\cL} = \left(
\begin{array}{cc}
M^{-1} & 0\\
0 & M^{-1}
\end{array} \right)\left(
\begin{array}{cc}
L & -B\\
M & 0
\end{array} \right).
\end{equation}

\section{The limit set and Convergence Rates}

With the exception of the last assertion in Theorem \ref{th}, the results in Section 4.1 are known for self-adjoint
operators (see \cite{bo,bost,shar}). We
refine and extend these results to normal operators.

\subsection{The Limit Set}

\begin{lemma}\label{prebound}
Let $\cL$ be a finite dimensional subspace, then $\sigma(S_\cL)\subset\mathcal{Q}(\sigma(A))$ and for any $z\notin\mathcal{Q}(\sigma(A))$ we have
\begin{displaymath}
\vert\langle(A - z)\psi,(A-\overline{z})\psi\rangle\vert \ge \dist[z,\mathcal{Q}(\sigma(A))]^2\Vert\psi\Vert^2\quad\textrm{for all}\quad\psi\in\mathcal{H}.
\end{displaymath}
\end{lemma}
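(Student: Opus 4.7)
The plan is to use the spectral theorem for the normal operator $A$ to reduce the quadratic form $\langle(A-z)\psi,(A-\overline{z})\psi\rangle$ to a scalar integral against $\Sigma_z$, recognise this integral as lying in $\conv(\sigma(A)_z)$, and then invoke Corollary \ref{cor2}.

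First I would fix $\psi\in\mathcal{H}$ with $\|\psi\|=1$ and use the spectral measure $E$ of $A$ together with normality (so $A^*A=AA^*$ and both commute with $E$) to write
\begin{equation*}
\langle(A-z)\psi,(A-\overline{z})\psi\rangle \;=\; \int_{\sigma(A)}(\lambda-z)(\overline{\lambda}-z)\,d\mu(\lambda),
\end{equation*}
where $\mu(\cdot):=\langle E(\cdot)\psi,\psi\rangle$ is a Borel probability measure supported on $\sigma(A)$. The integrand takes values exactly in $\sigma(A)_z$, so the integral is a continuous (probabilistic) convex combination of points of $\sigma(A)_z$ and therefore belongs to $\overline{\conv}(\sigma(A)_z)$. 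Consequently
\begin{equation*}
\bigl|\langle(A-z)\psi,(A-\overline{z})\psi\rangle\bigr| \;\ge\; \dist\bigl(0,\conv(\sigma(A)_z)\bigr).
\end{equation*}

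Next, to get the stated lower bound, assume $z\notin\mathcal{Q}(\sigma(A))$ and set $\varepsilon:=\dist[z,\mathcal{Q}(\sigma(A))]>0$. For every $\varepsilon'\in(0,\varepsilon)$ we have $z\notin[\mathcal{Q}(\sigma(A))]_{\varepsilon'}$, and Corollary \ref{cor2} applied with $\Sigma=\sigma(A)$ yields $\dist(0,\conv(\sigma(A)_z))\ge(\varepsilon')^2$. Letting $\varepsilon'\uparrow\varepsilon$ gives $\dist(0,\conv(\sigma(A)_z))\ge\varepsilon^2$, which combined with the previous displayed inequality and homogeneity in $\psi$ produces the claimed estimate
\begin{equation*}
|\langle(A-z)\psi,(A-\overline{z})\psi\rangle|\;\ge\;\dist[z,\mathcal{Q}(\sigma(A))]^2\,\|\psi\|^2.
\end{equation*}

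Finally, for the inclusion $\sigma(S_{\cL})\subset\mathcal{Q}(\sigma(A))$: by the previous lemma $\sigma(S_{\cL})=\Spec_2(A,\cL)$, so any $z\in\sigma(S_{\cL})$ admits a $\phi\in\cL\setminus\{0\}$ with $\langle(A-z)\phi,(A-\overline{z})\psi\rangle=0$ for all $\psi\in\cL$; specialising to $\psi=\phi$ gives $\langle(A-z)\phi,(A-\overline{z})\phi\rangle=0$. If $z$ were outside $\mathcal{Q}(\sigma(A))$ the bound just proved would force $\|\phi\|=0$, a contradiction.

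I do not expect any serious obstacle: the only subtle step is the convex-hull reading of the spectral integral, which requires remembering that $\mu$ is a positive probability measure and that $\overline{\conv}(\sigma(A)_z)$ is closed, so that integrals against $\mu$ land there. Everything else is a direct appeal to Corollary \ref{cor2} and to the preceding linearisation lemma.
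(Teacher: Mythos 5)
Your proof is correct and follows essentially the same route as the paper: both reduce the sesquilinear form to a spectral integral $\int_{\sigma(A)}(\lambda-z)(\overline{\lambda}-z)\,d\langle E(\lambda)\psi,\psi\rangle$ and then invoke Corollary \ref{cor2} to bound $\dist(0,\conv(\sigma(A)_z))$ from below by $\dist[z,\mathcal{Q}(\sigma(A))]^2$. The only cosmetic difference is that the paper extracts a separating direction $e^{i\theta}$ with $\Re\,e^{i\theta}(\lambda-z)(\overline{\lambda}-z)\ge\dist[z,\mathcal{Q}(\sigma(A))]^2$ and bounds $\Re\,e^{i\theta}\langle(A-z)\psi,(A-\overline{z})\psi\rangle$ directly, whereas you observe that the integral against the probability measure lies in the (compact, hence closed) convex hull $\conv(\sigma(A)_z)$; these two steps are equivalent, and your final contradiction for $\sigma(S_\cL)\subset\mathcal{Q}(\sigma(A))$ is exactly what the paper leaves implicit.
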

\begin{proof}
Suppose $z\notin\mathcal{Q}(\sigma(A))$. From Corollary \ref{cor2} we have $\dist(0,\conv(\sigma(A)_z))\ge\dist[z,\mathcal{Q}(\sigma(A))]^2$, then it follows that for some $\theta\in[0,2\pi)$ we have $\Re e^{i\theta}(\lambda - z)(\overline{\lambda} - z)\ge\dist[z,\mathcal{Q}(\sigma(A))]^2$ for all $\lambda\in \sigma(A)$. Thus
\begin{displaymath}
\Re e^{i\theta}\langle(A - z)\psi,(A-\overline{z})\psi\rangle = \int_{\sigma(A)}\Re e^{i\theta}(\lambda-z)(\overline{\lambda} - z)~d\langle E_\lambda\psi,\psi\rangle \ge \dist[z,\mathcal{Q}(\sigma(A))]^2\Vert\psi\Vert^2,
\end{displaymath}
from which both assertions follow.
\end{proof}

\begin{lemma}\label{ahat}
Let $\varepsilon>0$ and $\sigma(A)\backslash[\sigma_{\ess}(A)]_\varepsilon = \{z_1,\dots,z_m\}$. For any $e\in\sigma_{\ess}(A)$ the operator
\begin{equation}\label{perturbed}
\hat{A}:= A + \sum_{j=1}^m (e - z_j)E(\{z_j\})\quad\textrm{satisfies}\quad
\mathcal{Q}(\sigma(\hat{A}))\subseteq[\mathcal{Q}(\sigma_{\ess}(A))]_\varepsilon.
\end{equation}
\end{lemma}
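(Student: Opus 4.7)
The plan is to identify $\sigma(\hat{A})$ explicitly, show it lies in $[\sigma_{\ess}(A)]_\varepsilon$, and then invoke monotonicity of $\mathcal{Q}$ together with Corollary \ref{cor2}.

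First I would observe that because $A$ is normal, the spectral projections $E(\{z_j\})$ associated to the isolated eigenvalues $z_j$ are pairwise orthogonal, mutually orthogonal to the projection $I-\sum_j E(\{z_j\})$, and each commutes with $A$. Hence $\hat{A}$ is itself normal and decomposes as the orthogonal direct sum of $eI$ on each $E(\{z_j\})\cH$ with $A$ restricted to $(I-\sum_j E(\{z_j\}))\cH$. The spectrum of the latter restriction is $\sigma(A)\setminus\{z_1,\dots,z_m\}$, and so
\begin{displaymath}
\sigma(\hat{A})=\{e\}\cup\bigl(\sigma(A)\setminus\{z_1,\dots,z_m\}\bigr).
\end{displaymath}

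Next I would verify that $\sigma(\hat{A})\subseteq[\sigma_{\ess}(A)]_\varepsilon$. By hypothesis $\{z_1,\dots,z_m\}$ is exactly the portion of $\sigma(A)$ lying outside the closed $\varepsilon$-neighbourhood of $\sigma_{\ess}(A)$, so $\sigma(A)\setminus\{z_1,\dots,z_m\}\subseteq[\sigma_{\ess}(A)]_\varepsilon$. Since also $e\in\sigma_{\ess}(A)\subseteq[\sigma_{\ess}(A)]_\varepsilon$, the claimed inclusion follows.

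Finally, I would apply the monotonicity of $\mathcal{Q}$, which is immediate from its definition: if $\Sigma_1\subseteq\Sigma_2$ then $(\Sigma_1)_z\subseteq(\Sigma_2)_z$ and hence $\conv((\Sigma_1)_z)\subseteq\conv((\Sigma_2)_z)$, so $\mathcal{Q}(\Sigma_1)\subseteq\mathcal{Q}(\Sigma_2)$. Combining with Corollary \ref{cor2} gives
\begin{displaymath}
\mathcal{Q}(\sigma(\hat{A}))\subseteq\mathcal{Q}([\sigma_{\ess}(A)]_\varepsilon)=[\mathcal{Q}(\sigma_{\ess}(A))]_\varepsilon,
\end{displaymath}
which is the required inclusion. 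There is no real obstacle here; the only subtlety is ensuring that normality of $A$ is genuinely used, so that the finite-rank perturbation produces a normal $\hat{A}$ whose spectrum is the clean set described above rather than being polluted by the non-commutativity of the perturbation with $A$.
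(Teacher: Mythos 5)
Your proof is correct and follows essentially the same route as the paper, which simply states that $\sigma(\hat{A})\subset[\sigma_{\ess}(A)]_\varepsilon$ is evident and then invokes Corollary \ref{cor2}. You have merely filled in the details the paper leaves implicit: the orthogonal decomposition giving $\sigma(\hat{A})$, the inclusion of that spectrum in $[\sigma_{\ess}(A)]_\varepsilon$, and the monotonicity of $\mathcal{Q}$ needed before applying $\mathcal{Q}([\Sigma]_\varepsilon)=[\mathcal{Q}(\Sigma)]_\varepsilon$.
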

\begin{proof}
Evidently, $\sigma(\hat{A})\subset[\sigma_{\ess}(A)]_\varepsilon$, therefore the assertion follows from Corollary \ref{cor2}.
\end{proof}

For a $z\in \sigma(S_{\cL})$ we denote the corresponding spectral subspace by $\mathcal{M}_{\cL}(\{z\})$.

\begin{lemma}\label{mult}
Let $\cL$ be a finite dimensional subspace, $\varepsilon>0$ and $\sigma(A)\backslash[\sigma_{\ess}(A)]_\varepsilon = \{z_1,\dots,z_m\}$. If $\cL(\{z_1,\dots,z_m\})\subseteq\cL$, then
\begin{equation}\label{a}
\sigma(S_{\cL})\cap\big(\mathbb{C}\backslash[\mathcal{Q}(\sigma_{\ess}(A))]_\varepsilon\big) = \{z_1,\overline{z}_1,\dots,z_m,\overline{z}_m\}.
\end{equation}
Moreover, $\mathcal{M}_{\cL}(z_j)$ and $\mathcal{M}_{\cL}(\overline{z}_j)$ are given by \eqref{imaginary} if $z_j\notin\mathbb{R}$, and by $\eqref{real}$ if $z_j\in\mathbb{R}$.
\end{lemma}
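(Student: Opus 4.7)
The plan is to block-diagonalise $S_\cL$ using the orthogonal decomposition $\cL = \cL_1 \oplus \cL_2$, where $\cL_1 := \cL(\{z_1,\dots,z_m\}) \subseteq \cL$ by hypothesis and $\cL_2 := \cL \ominus \cL_1$. Because $A$ is normal and $\cL_1$ is spanned by eigenvectors of $A$, both $A$ and $A^*$ leave $\cL_1$ and $\cL_1^\perp$ invariant. Consequently, for $\psi \in \cL_2 \subseteq \cL_1^\perp$ the vectors $A\psi$, $A^*\psi$ and $A^*A\psi$ all lie in $\cL_1^\perp$, so their projections onto $\cL$ land in $\cL \cap \cL_1^\perp = \cL_2$. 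The cross matrix entries $\langle(A+A^*)\phi,\psi\rangle$ and $\langle A\phi,A\psi\rangle$ between $\cL_1$ and $\cL_2$ therefore vanish, and inspecting \eqref{matrices0}--\eqref{matrices2} produces a block decomposition $S_\cL = S_1 \oplus S_2$ along $(\cL_1\oplus\cL_1)\oplus(\cL_2\oplus\cL_2)$, where $S_1$ is the block-operator matrix built from $A|_{\cL_1}$ and $S_2$ from the compression of $A$ to $\cL_2$.

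On the first block, $A|_{\cL_1}$ is a normal operator on a finite-dimensional space whose spectrum is exactly $\{z_1,\dots,z_m\}$, so Lemma \ref{eigspaces} applied to $A|_{\cL_1}$ identifies $\sigma(S_1) = \{z_j,\overline z_j : j=1,\dots,m\}$ and gives the associated spectral subspaces in the form \eqref{imaginary}--\eqref{real}. This delivers the $\supseteq$ inclusion of \eqref{a} and, granted the reverse inclusion, also the claimed formulas for $\mathcal{M}_\cL(z_j)$ and $\mathcal{M}_\cL(\overline z_j)$. For the $\subseteq$ inclusion I would compare $S_2$ with the analogous block-operator matrix built from the perturbed operator $\hat A$ of Lemma \ref{ahat}. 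Since $\cL_1$ reduces $\hat A$ (with $\hat A|_{\cL_1} = eI$) and $\hat A$ coincides with $A$ on $\cL_1^\perp \supseteq \cL_2$, the same block argument applied to $\hat A$ shows that its $\cL_2$-block equals $S_2$. Lemma \ref{prebound} applied to $\hat A$ then gives $\sigma(S_2) \subseteq \mathcal{Q}(\sigma(\hat A))$, and Lemma \ref{ahat} yields $\mathcal{Q}(\sigma(\hat A)) \subseteq [\mathcal{Q}(\sigma_{\ess}(A))]_\varepsilon$, completing \eqref{a}.

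The main subtlety lies in the book-keeping of the block decomposition: the projection $P$ appearing in the definition of $S_\cL$ is onto $\cL$ rather than $\cL_2$, and one needs the $A$- and $A^*$-invariance of $\cL_1^\perp$ (a direct consequence of the normality of $A$) to argue that on vectors arising from $\cL_2$ the projection $P$ behaves like the projection onto $\cL_2$. Once this is carefully verified, the remaining steps are routine assembly from Lemmas \ref{prebound}, \ref{eigspaces}, \ref{ahat} and Corollary \ref{cor2}.
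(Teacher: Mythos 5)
Your argument is correct, and it takes a genuinely different route from the paper's. The paper proceeds at the level of the quadratic form: given a putative $z\in\sigma(S_\cL)$ outside the allowed set, it takes the corresponding $\psi\in\cL$, tests $\langle(A-z)\psi,(A-\overline z)\cdot\rangle$ against an eigenvector basis of $\cL(\{z_1,\dots,z_m\})$ to conclude $\psi\perp\cL(\{z_1,\dots,z_m\})$, and then invokes the identity $\langle(A-z)\psi,(A-\overline z)\psi\rangle=\langle(\hat A-z)\psi,(\hat A-\overline z)\psi\rangle$ together with Lemmas \ref{prebound} and \ref{ahat} for a contradiction; the eigenspace claim is handled by a separate direct computation ruling out Jordan blocks. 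You instead lift the same structural fact --- that $\cL_1=\cL(\{z_1,\dots,z_m\})$ reduces $A$ (hence $A^*$ and $A^*A$), and that $P$ restricted to $\cL_1^\perp$ agrees with the projection onto $\cL_2=\cL\ominus\cL_1$ --- to the level of the linearisation, producing an operator decomposition $S_\cL=S_1\oplus S_2$ on $(\cL_1\oplus\cL_1)\oplus(\cL_2\oplus\cL_2)$. This buys you something concrete: on the first block, $S_1$ is exactly the operator $T$ of Lemma \ref{eigspaces} for the finite-dimensional normal operator $A|_{\cL_1}$, so that lemma delivers both $\sigma(S_1)=\{z_j,\overline z_j\}$ and the eigenspace formulas \eqref{imaginary}--\eqref{real} at once, sparing you the paper's hands-on Jordan-block verification. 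On the second block, the observation that $\hat A$ and its adjoint agree with $A$ and $A^*$ on $\cL_1^\perp\supseteq\cL_2$ identifies $S_2$ with the compression $S_{\cL_2}$ built from $\hat A$, so Lemma \ref{prebound} applied to $\hat A$ gives $\sigma(S_2)\subseteq\mathcal Q(\sigma(\hat A))\subseteq[\mathcal Q(\sigma_{\ess}(A))]_\varepsilon$, matching what the paper gets by the form argument. All the invariance facts you flag as the ``main subtlety'' do hold (for $v\in\cL_1^\perp$ and $u\in\cL_1\subseteq\cL$ one has $\langle Pv,u\rangle=\langle v,u\rangle=0$, so $Pv\in\cL\cap\cL_1^\perp=\cL_2$, and $v-Pv\perp\cL\supseteq\cL_2$ shows $Pv$ is the orthogonal projection of $v$ onto $\cL_2$), so the block-diagonalisation is watertight. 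Both proofs ultimately rest on the same two pillars --- $\cL_1$ reduces $A$, and $\hat A=A$ on $\cL_1^\perp$ --- but yours packages them more structurally and offloads the eigenspace analysis to Lemma \ref{eigspaces}, which is arguably cleaner.
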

\begin{proof}
That $z_j,\overline{z}_j\in \sigma(S_{\cL})$ is obvious. Suppose $z\notin[\mathcal{Q}(\sigma_{\ess}(A))]_\varepsilon\cup\{z_1,\overline{z}_1,\dots,z_m,\overline{z}_m\}$ and  $z\in \sigma(S_{\cL})$.
Let  $\phi_1,\dots,\phi_s$ form a basis of eigenvectors for $\cL(\{z_1,\dots,z_m\})$. For some non-zero $\psi\in\cL$ we have  $\langle (A - z)\psi,(A - \overline{z})\phi\rangle = 0$ for all $\phi\in\mathcal{L}$. In particular, for any $1\le j\le s$ we have
\begin{displaymath}
0 = \langle (A - z)\psi,(A - \overline{z})\phi_j\rangle =
(z_k-z)(\overline{z}_k-z)\langle \psi,\phi_j\rangle\quad\textrm{for some}
\quad z_k\in\{z_1,\dots,z_m\},
\end{displaymath}
and we deduce that $\psi\perp\cL(\{z_1,\dots,z_m\})$. With $\hat{A}$ given by \eqref{perturbed}, it follows from Lemma \ref{prebound} and Lemma \ref{ahat} that $\langle(\hat{A} - z)\psi,(\hat{A}-\overline{z})\psi\rangle\ne 0$. However, since $\psi\perp\cL(\{z_1,\dots,z_m\})$ we have
$\langle (A - z)\psi,(A - \overline{z})\psi\rangle=\langle(\hat{A} - z)\psi,(\hat{A} -\overline{z})\psi\rangle$, and
\eqref{a} follows from the contradiction.

For the second assertion we assume that $z_j\notin\mathbb{R}$, the case
where $z_j\in\mathbb{R}$ being treated similarly. Let $z_j$ and $\overline{z}_j$ have multiplicities $k$ and $l$ (as eigenvalues of $A$),  respectively.
After a possible relabeling let $\cL(\{z_j\})=\Span\{\phi_1,\dots,\phi_k\}$ and $\cL(\{\overline{z}_j\})=\Span\{\phi_{k+1},\dots,\phi_{k+l}\}$. Evidently, we have
\begin{displaymath}
(S_{\cL} - z_j) \left(
\begin{array}{c}
z_j\phi_h\\
\phi_h
\end{array} \right)= \left(
\begin{array}{c}
0\\
0
\end{array} \right)\quad\textrm{for}\quad 1\le h\le k+l.
\end{displaymath}
Then if for some $x,y\in\cL$ and $1\le h\le k+l$ we have
\begin{displaymath}(S_{\cL} - z_j) \left(
\begin{array}{c}
x\\
y
\end{array} \right)= \left(
\begin{array}{c}
z_j\phi_h\\
\phi_h
\end{array} \right),\quad\textrm{then}\quad x = \phi_h+z_jy\quad\textrm{and}\quad
P(A-z_j)(A^*-z_j)y=2i\Im z_j\phi_h
\end{displaymath}
where $P$ is the orthogonal projection onto $\cL$. The last term implies that $y=0$, therefore $x=\phi_h$ and $P(A+A^*-z_j)\phi_h = z_j\phi_h$ which is a contradiction. We have shown that
\begin{displaymath}
\mathcal{M}_{\cL}(\{z_j\})\supseteq\Span\left\{\left(
\begin{array}{c}
z_j\phi_1\\
\phi_1
\end{array} \right),\dots,\left(
\begin{array}{c}
z_j\phi_{k+l}\\
\phi_{k+l}
\end{array} \right)\right\}
\end{displaymath}
and that equality can only fail if
\begin{equation}\label{laste}
(S_{\cL} - z_j) \left(
\begin{array}{c}
z_j\phi\\
\phi
\end{array} \right)= \left(
\begin{array}{c}
0\\
0
\end{array} \right)\quad\textrm{for some}\quad\phi\in\cL,\quad\phi\notin\cL(\{z_1,\overline{z}_1,\dots,z_m,\overline{z}_m\}).
\end{equation}
Suppose \eqref{laste} holds and let $\psi=(I-E(\{z_j,\overline{z}_j\}))\phi$. Then $\psi\in\cL\backslash\{0\}$ and
\begin{displaymath}
(S_{\cL} - z_j) \left(
\begin{array}{c}
z_j\psi\\
\psi
\end{array} \right)= \left(
\begin{array}{c}
0\\
0
\end{array} \right)\quad\Rightarrow\quad P(A-z_j)(A^*-z_j)\psi = 0.
\end{displaymath}
Clearly  $\psi\perp\cL(\{z_j,\overline{z}_j\})$, and arguing as above it follows that $\psi\perp\cL(\{z_1,\dots,z_m\})$. Therefore $\langle (A - z)\psi,(A - \overline{z})\psi\rangle=\langle(\hat{A} - z)\psi,(\hat{A} -\overline{z})\psi\rangle$
and again Lemma \ref{prebound} and Lemma \ref{ahat} yield a contradiction.
\end{proof}

For an $\varepsilon>0$ with $\sigma(A)\backslash[\sigma_{\ess}(A)]_\varepsilon = \{z_1,\dots,z_m\}$ we set
$M_0 = \max\{\vert e-z_j\vert:e\in\sigma_{\ess}(A):1\le j\le m\}$ and define the following functions acting on $\mathbb{C}\backslash\big(\sigma(A)\cup \sigma(A^*)\cup[\mathcal{Q}(\sigma_{\ess}(A))]_\varepsilon\big)$
\begin{align*}
f_1(z) &= \max\{\vert z- z_j\vert,\vert z - \overline{z}_j\vert:1\le j\le m\},\\
f_2(z)&=\min\{\vert z - z_j\vert\vert z - \overline{z}_j\vert: 1\le j\le m\},\\
f_3(z)&=\max\{\vert z-e\vert\vert z-\overline{e}\vert:e\in\sigma_{\ess}(A)\},\\
f_4(z)&=\min\{\vert z-e\vert\vert z-\overline{e}\vert:e\in\sigma_{\ess}(A)\},
\end{align*}
and
\begin{equation}\label{albe}
\alpha(z) = \min\left\{1,\frac{f_2(z)}{f_3(z)}\right\}\quad\textrm{and}
\quad\beta(z) = \frac{2M_0f_1(z) + M_0^2}{f_4(z)}\left(1+\frac{\Vert (A-z)(A^*-z)\Vert}{\dist[z,[\mathcal{Q}(\sigma_{\ess}(A))]_\varepsilon]^2}\right).
\end{equation}

\begin{theorem}\label{bound}
Let $\cL$ be a finite dimensional subspace with corresponding orthogonal projection $P$. Let $\varepsilon>0$ and $\sigma(A)\backslash[\sigma_{\ess}(A)]_\varepsilon = \{z_1,\dots,z_m\}$.
For any $z\notin \sigma(A)\cup \sigma(A^*)\cup[\mathcal{Q}(\sigma_{\ess}(A))]_\varepsilon$ we have
\begin{equation}\label{sp}
\Vert P(A - z)(A^*-z)P\psi\Vert\ge
\dist[z,[\mathcal{Q}(\sigma_{\ess}(A))]_\varepsilon]^{2}\Big(\alpha(z) - \beta(z)\delta(\cL(\{z_1,\dots,z_m\}),\cL)\Big)\Vert P\psi\Vert
\end{equation}
for all $\psi\in\mathcal{H}$.
\end{theorem}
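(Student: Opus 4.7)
The argument proceeds by splitting $\cH$ via the spectral projection $E_0 := E(\{z_1,\dots,z_m\})$, establishing the bound first in the ``clean'' case $\cL(\{z_1,\dots,z_m\})\subseteq\cL$ (i.e.\ $\delta = 0$) and then perturbing. Set $\xi := P\psi$, $\phi_0 := E_0\xi$, $\phi_1 := (I-E_0)\xi$, and $B_z := (A-z)(A^*-z)$. Since $A$ is normal, $E_0$ commutes with $B_z$, so $B_z\xi = B_z\phi_0 + B_z\phi_1$ is an orthogonal sum in $\cH$ with $B_z\phi_0 \in \cL(\{z_1,\dots,z_m\})$ and $B_z\phi_1 \in \cL(\{z_1,\dots,z_m\})^\perp$. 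On each $\cL(\{z_j\})$ the operator $B_z$ acts as multiplication by $(z_j-z)(\overline{z}_j-z)$, whence $\Vert B_z\phi_0\Vert\ge f_2(z)\Vert\phi_0\Vert$. On $\cL(\{z_1,\dots,z_m\})^\perp$, $A$ and $\hat{A}$ coincide, so $B_z\phi_1 = (\hat{A}-z)(\hat{A}^*-z)\phi_1$; a direct check shows $\hat{A}$ is normal, and combining Lemma~\ref{prebound} (applied to $\hat{A}$) with Lemma~\ref{ahat} yields $|\langle B_z\phi_1,\phi_1\rangle|\ge d^2\Vert\phi_1\Vert^2$, where $d := \dist[z,[\mathcal{Q}(\sigma_{\ess}(A))]_\varepsilon]$.

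When $\delta = 0$ one has $\phi_0,\phi_1\in\cL$, $PB_z\phi_0 = B_z\phi_0$, and $PB_z\phi_1\in\cL\cap\cL(\{z_1,\dots,z_m\})^\perp$; these two pieces are mutually orthogonal, so Pythagoras gives
\[
\Vert PB_z\xi\Vert^2 \ge f_2(z)^2\Vert\phi_0\Vert^2 + d^4\Vert\phi_1\Vert^2 \ge d^4\alpha(z)^2\bigl(\Vert\phi_0\Vert^2+\Vert\phi_1\Vert^2\bigr).
\]
The last inequality uses the elementary fact $d^2 \le f_3(z)$, which follows from $\sigma_{\ess}(A)\subseteq[\mathcal{Q}(\sigma_{\ess}(A))]_\varepsilon$ together with the conjugation symmetry of $\mathcal{Q}(\cdot)$. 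This settles \eqref{sp} when $\delta = 0$.

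For $\delta > 0$ the orthogonality is broken. Because $P\xi = \xi$, one obtains $\Vert(I-P)\phi_1\Vert = \Vert(I-P)\phi_0\Vert\le\delta\Vert\phi_0\Vert$, while $B_z\phi_0\in\cL(\{z_1,\dots,z_m\})$ gives $\Vert(I-P)B_z\phi_0\Vert\le\delta\Vert B_z\phi_0\Vert$. Three error sources appear: (a)~$\Vert PB_z\phi_0\Vert^2 \ge (1-\delta^2)\Vert B_z\phi_0\Vert^2$; (b)~testing the quadratic form on $B_z\phi_1$ against $P\phi_1$ yields $\Vert PB_z\phi_1\Vert\ge d^2\Vert\phi_1\Vert - \Vert B_z\Vert\delta\Vert\phi_0\Vert$; and (c)~the cross term, orthogonal when $\delta=0$, satisfies $|\langle PB_z\phi_0,PB_z\phi_1\rangle|\le \delta\Vert B_z\phi_0\Vert\Vert B_z\phi_1\Vert$. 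Assembling $\Vert PB_z\xi\Vert^2 = \Vert PB_z\phi_0\Vert^2 + \Vert PB_z\phi_1\Vert^2 + 2\Re\langle PB_z\phi_0,PB_z\phi_1\rangle$, normalizing by the clean-case bound $d^2\alpha(z)\Vert\xi\Vert$, and using the perturbation estimate $\Vert(\hat A-z)(\hat A^*-z) - B_z\Vert\le 2M_0 f_1(z) + M_0^2$, produces an error of the form $d^2\beta(z)\delta\Vert\xi\Vert$; the factor $1+\Vert B_z\Vert/d^2$ in $\beta(z)$ arises precisely from the cross term, which is controlled by $\Vert B_z\Vert$ rather than by $d^2$, and the denominator $f_4(z)$ enters when scaling the perturbation against the inverse of $\hat B_z$ on the eigenspace components.

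The main obstacle is this final combination. The estimate for $B_z\phi_1$ is a quadratic form inequality, not a pointwise lower bound with a matching phase, and so cannot be added to the $\Vert B_z\phi_0\Vert$ estimate by any common-angle argument; moreover, the cross term is genuinely of size $\delta\Vert B_z\Vert\Vert\phi_0\Vert\Vert\phi_1\Vert$, which mixes the two components multiplicatively. Packaging the three contributions so that the mixed norms $\Vert\phi_0\Vert$ and $\Vert\phi_1\Vert$ combine into $\Vert P\psi\Vert$, with an error that is truly first order in $\delta$ and free of spurious factors like $\Vert\phi_0\Vert/\Vert\phi_1\Vert$, is the technical heart of the theorem.
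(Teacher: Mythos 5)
Your proposal handles the $\delta = 0$ case correctly: with $\cL(\{z_1,\dots,z_m\})\subseteq\cL$ the vectors $\phi_0,\phi_1$ both lie in $\cL$, $PB_z\phi_0 = B_z\phi_0$, and $PB_z\phi_1\perp\cL(\{z_1,\dots,z_m\})$, so Pythagoras together with $\Vert B_z\phi_0\Vert\ge f_2(z)\Vert\phi_0\Vert$, $|\langle B_z\phi_1,\phi_1\rangle|\ge d^2\Vert\phi_1\Vert^2$ and $d^2\le f_3(z)$ gives the claimed bound. But for $\delta>0$ you have not produced a proof; you have, candidly, produced a list of obstacles. Those obstacles are real and, in the form you set up, I do not see how to overcome them. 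The lower bound on $PB_z\phi_1$ is a quadratic-form estimate against a fixed test vector $\phi_1$, which forces the error $\Vert B_z\Vert\delta\Vert\phi_0\Vert$ to enter additively next to $d^2\Vert\phi_1\Vert$, and the cross term contributes $\delta\Vert B_z\Vert^2\Vert\phi_0\Vert\Vert\phi_1\Vert$. When you then try to reconstitute $\Vert\xi\Vert^2 = \Vert\phi_0\Vert^2+\Vert\phi_1\Vert^2$ from these mixed terms, you either lose a factor $\delta$ to the order $\delta^2$ (if you complete the square) or introduce the ratio $\Vert\phi_0\Vert/\Vert\phi_1\Vert$ you flagged as spurious. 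The spectral decomposition of the \emph{vector} $\xi$ is the wrong split.

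The idea you are missing is to perturb the \emph{operator} rather than the vector, and to use Galerkin invertibility of the compressed unperturbed operator. The paper writes $PA(z)P = P\hat{A}(z)P - PK(z)P$ with $K(z) = \hat{A}(z)-A(z)$ a finite-rank operator supported on $\cL(\{z_1,\dots,z_m\})$. Because $\sigma(\hat{A})\subset[\sigma_{\ess}(A)]_\varepsilon$, Lemma~\ref{ahat} gives $z\notin\mathcal{Q}(\sigma(\hat{A}))$, so Lemma~\ref{prebound} applied to $\hat A$ yields $\Vert[P\hat{A}(z)|_\cL]^{-1}\Vert\le\dist[z,[\mathcal{Q}(\sigma_{\ess}(A))]_\varepsilon]^{-2}$ \emph{for every} finite-dimensional $\cL$ -- this is the uniform invertibility your vector-decomposition cannot produce. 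Then
\begin{displaymath}
\Vert PA(z)P\psi\Vert \ge \Vert[P\hat{A}(z)|_\cL]^{-1}\Vert^{-1}\,\Vert P\psi - [P\hat{A}(z)|_\cL]^{-1}PK(z)P\psi\Vert,
\end{displaymath}
and the right-hand bracket is bounded below by $\Vert P\psi - \hat{A}(z)^{-1}K(z)P\psi\Vert$ (which gives $\alpha(z)\Vert P\psi\Vert$ by a clean orthogonal split, essentially your $\delta=0$ computation transplanted to a vector identity) minus $\Vert\hat{A}(z)^{-1}K(z)P\psi - [P\hat{A}(z)|_\cL]^{-1}PK(z)P\psi\Vert$. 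This last discrepancy is genuinely first order in $\delta$ because $K(z)$ maps into $\cL(\{z_1,\dots,z_m\})$, and when you expand it you pick up exactly $\Vert(I-P)K(z)\Vert/f_4(z)$ and $\Vert[P\hat{A}(z)|_\cL]^{-1}\Vert\,\Vert\hat A(z)\Vert\,\Vert K(z)\Vert\,\Vert(I-P)\hat{A}(z)^{-1}E\Vert$, which is where $f_4(z)$ and the factor $1+\Vert A(z)\Vert/\dist[\cdot]^2$ in $\beta(z)$ come from. The key structural advantage is that the $\delta$-error is isolated in a single norm difference between exact and Galerkin resolvents acting on a fixed finite-rank perturbation -- no cross term, no ratio of component norms.
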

\begin{proof}
Let $\hat{A}$ be the operator defined in Lemma \ref{ahat} for some arbitrary $e\in\sigma_{\ess}(A)$.
For convenience we will write $A(z):= (A-z)(A^*-z)$ and $\hat{A}(z):= (\hat{A}-z)(\hat{A}^*-z)$.
Consider the following finite rank operator
\begin{displaymath}
K(z):=\hat{A}(z) - A(z) = \sum_{j=1}^m\big((\overline{z}_j-z)(e-z_j)+ (z_j-z)(\overline{e}-\overline{z}_j)+\vert e - z_j\vert^2\big)E(\{z_j\}).
\end{displaymath}
Evidently, $\Vert K(z)\Vert = \max\{\Vert K(z)\phi\Vert:\phi\in\cL(\{z_1,\dots,z_m\})\textrm{ and }\Vert\phi\Vert=1\}$, and for any $\phi\in\cL(\{z_1,\dots,z_m\}$ we have
\begin{equation}\label{knorm}
\Vert K(z)\phi\Vert^2 \le (2M_0f_1(z) + M_0^2)^2\Vert\phi\Vert^2\quad\Rightarrow\quad\Vert K(z)\Vert \le 2M_0f_1(z) + M_0^2.
\end{equation}
Let $E = E(\{z_1,\dots,z_m\})$, then for any $\psi\in\mathcal{H}$
\begin{align*}
\Vert\psi - \hat{A}(z)^{-1}K(z)\psi\Vert^2 &= \Vert(I - E)\psi\Vert^2 + \Vert E\psi -\hat{A}(z)^{-1}[\hat{A}(z) - A(z)]E\psi\Vert^2\\
&=\Vert(I - E)\psi\Vert^2 + \Vert\hat{A}(z)^{-1}A(z)E\psi\Vert^2\\
&\ge\Vert(I - E)\psi\Vert^2 + \frac{f_2(z)^2}{f_3(z)^2}\Vert E\psi\Vert^2\ge\alpha(z)^{2}\Vert \psi\Vert^2.
\end{align*}
Using Lemma \ref{prebound} and Lemma \ref{ahat} we have
\begin{displaymath}\Vert[P\hat{A}(z)|_{\cL}]^{-1}\Vert\le\dist[z,\mathcal{Q}(\sigma(\hat{A}))]^{-2}\le
\dist[z,[\mathcal{Q}(\sigma_{\ess}(A))]_\varepsilon]^{-2}.
\end{displaymath}
Note also that since $A$ is normal and $\sigma(\hat{A})\subset\sigma(A)$, it follows that $\Vert\hat{A}(z)\Vert\le\Vert A(z)\Vert$.
Combining these two estimates with \eqref{knorm} we obtain for any $\psi\in\mathcal{H}$
\begin{align*}
\Vert\hat{A}(z)^{-1}K(z)\psi - [P\hat{A}(z)|_{\cL}]^{-1}PK(z)\psi\Vert
&\le \Vert(I- P)\hat{A}(z)^{-1}K(z)\psi\Vert\\
&+ \Vert P\hat{A}(z)^{-1}K(z)\psi - [P\hat{A}(z)|_{\cL}]^{-1}PK(z)\psi\Vert\\
&\le \frac{\Vert(I- P)K(z)E\psi\Vert}{\vert e-z\vert\vert\overline{e}-z\vert}\\
&+ \Vert[P\hat{A}(z)|_{\cL}]^{-1}\Vert\Vert\hat{A}(z)P\hat{A}(z)^{-1}K(z)E\psi- K(z)E\psi\Vert\\
&\le \frac{\Vert(I- P)K(z)\Vert}{f_4(z)}\Vert\psi\Vert\\
&+ \Vert[P\hat{A}(z)|_{\cL}]^{-1}\Vert\Vert\hat{A}(z)\Vert\Vert K(z)\Vert\Vert (I-P)\hat{A}(z)^{-1}E\psi\Vert\\
&\le \beta(z)\delta(\cL(\{z_1,\dots,z_m\}),\cL)\Vert\psi\Vert.
\end{align*}
Finally, we have for any $\psi\in\mathcal{H}$
\begin{align*}
\Vert PA(z)P\psi\Vert &= \Vert P\hat{A}(z)P\psi - PK\psi\Vert\ge\Vert[P\hat{A}(z)|_{\cL}]^{-1}\Vert^{-1}\Vert P\psi - [P\hat{A}(z)|_{\cL}]^{-1}PKP\psi\Vert\\
&\ge \dist[z,\mathcal{Q}(\sigma(\hat{A}))]^{2}\big(\Vert P\psi - \hat{A}(z)^{-1}KP\psi\Vert - \Vert \hat{A}(z)^{-1}KP\psi - [P\hat{A}(z)|_{\cL}]^{-1}PKP\psi\Vert\big)\\
&\ge\dist[z,[\mathcal{Q}(\sigma_{\ess}(A))]_\varepsilon]^{2}\Big(\alpha(z) - \beta(z)\delta(\cL(\{z_1,\dots,z_m\}),\cL)\Big)\Vert P\psi\Vert.
\end{align*}
\end{proof}


For a sequence of subspaces $(\cL_n)\in\Lambda$ we shall write $S_n$ instead of $S_{\cL_n}$.
For a $z\in \sigma(S_n)$ we denote the corresponding spectral subspace by $\mathcal{M}_{n}(\{z\})$ instead of $\mathcal{M}_{\cL_n}(\{z\})$.
For each $n\in\mathbb{N}$ the orthogonal projection onto $\cL_n$ will be denoted $P_n$.

\begin{corollary}\label{lim2b}
Let $(\cL_n)\in\Lambda$, then
\begin{displaymath}
\Big(\lim_{n\to\infty}\sigma(S_n)\Big)\backslash\mathcal{Q}(\sigma_{\ess}(A))\subset \sigma_{\dis}(A)\cup \sigma_{\dis}(A^*).
\end{displaymath}
\end{corollary}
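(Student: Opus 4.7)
The plan is proof by contradiction. Suppose $z$ is a limit point of $\sigma(S_n)$ lying outside $\mathcal{Q}(\sigma_{\ess}(A))$, say $z_n \to z$ with $z_n \in \sigma(S_n)$, and suppose for contradiction that $z \notin \sigma_{\dis}(A) \cup \sigma_{\dis}(A^*)$. By Lemma \ref{cor0}, $\sigma_{\ess}(A) \subset \mathcal{Q}(\sigma_{\ess}(A))$, and the conjugation symmetry in that lemma gives $\sigma_{\ess}(A^*) = \overline{\sigma_{\ess}(A)} \subset \mathcal{Q}(\sigma_{\ess}(A))$ as well. Thus $z \notin \sigma_{\ess}(A) \cup \sigma_{\ess}(A^*)$, and combined with the contradiction hypothesis this forces $z \in \rho(A) \cap \rho(A^*)$.

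Next, fix $\varepsilon > 0$ small enough that $z \notin [\mathcal{Q}(\sigma_{\ess}(A))]_\varepsilon$ and that $\sigma(A) \setminus [\sigma_{\ess}(A)]_\varepsilon = \{z_1, \dots, z_m\}$ is finite. The latter is automatic for any $\varepsilon > 0$ since this set is a discrete subset of the compact set $\sigma(A)$, consisting of isolated eigenvalues of finite multiplicity. Because $z$ also avoids $\{z_j, \overline{z}_j\}_{j=1}^{m}$ by the contradiction hypothesis, the hypotheses of Theorem \ref{bound} hold at $z$; openness of the complement $\mathbb{C}\setminus\big(\sigma(A)\cup\sigma(A^*)\cup[\mathcal{Q}(\sigma_{\ess}(A))]_\varepsilon\big)$ and the convergence $z_n \to z$ guarantee that they hold at $z_n$ for all large $n$ as well.

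The core of the argument is then to push \eqref{sp} to the limit. The quantities $\alpha$, $\beta$, and $\dist[\cdot,[\mathcal{Q}(\sigma_{\ess}(A))]_\varepsilon]^2$ appearing in \eqref{sp} are continuous at $z$, with $\alpha(z) > 0$, $\beta(z) < \infty$, and the distance strictly positive. Moreover $\delta(\cL(\{z_1,\dots,z_m\}), \cL_n) \to 0$, since $\cL(\{z_1,\dots,z_m\})$ is finite dimensional and $P_n \to I$ strongly. Feeding $z_n$ into \eqref{sp} therefore yields a constant $C > 0$ and an $N$ such that $\Vert P_n(A-z_n)(A^*-z_n) P_n \psi\Vert \ge C \Vert P_n \psi\Vert$ for every $\psi \in \mathcal{H}$ and every $n \ge N$.

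To close, note that $z_n \in \Spec_2(A,\cL_n)$ supplies $\phi_n \in \cL_n \setminus \{0\}$ with $\langle(A-z_n)\phi_n,(A-\overline{z}_n)\psi\rangle = 0$ for all $\psi \in \cL_n$, i.e.\ $P_n(A^*-z_n)(A-z_n)\phi_n = 0$. Normality of $A$ allows the two factors to commute, and $P_n\phi_n = \phi_n$ gives $P_n(A-z_n)(A^*-z_n)P_n\phi_n = 0$, contradicting the lower bound. The one genuinely delicate point is the continuity/openness bookkeeping used to transfer the hypotheses and estimate of Theorem \ref{bound} from $z$ to the approximating sequence $z_n$; the analytic content is already packaged inside that theorem.
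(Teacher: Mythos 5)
Your argument is correct and is essentially the paper's proof in sequential form: the paper takes a compact set $\mathcal{N}$ disjoint from $\sigma(A)\cup\sigma(A^*)\cup[\mathcal{Q}(\sigma_{\ess}(A))]_\varepsilon$, uses compactness to get uniform bounds $\alpha=\min\alpha(z)$, $\beta=\max\beta(z)$ over $\mathcal{N}$, and applies Theorem~\ref{bound} once $\delta(\cL(\{z_1,\dots,z_m\}),\cL_n)<\alpha/\beta$, which is exactly the uniformity you extract instead via continuity of $\alpha,\beta$ at the limit point $z$ together with $z_n\to z$. Both routes pass through Theorem~\ref{bound} in the same way, so this is the same approach.
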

\begin{proof}
Let $\varepsilon>0$, $\sigma(A)\backslash[\sigma_{\ess}(A)]_\varepsilon = \{z_1,\dots,z_m\}$, and
$\mathcal{N}\cap\big(\sigma(A)\cup \sigma(A^*)\cup[\mathcal{Q}(\sigma_{\ess}(A))]_\varepsilon\big) = \varnothing
$ where $\mathcal{N}$ is a compact set. We set $\alpha = \min\{\alpha(z):z\in\mathcal{N}\}$ and $\beta = \max\{\beta(z):z\in\mathcal{N}\}$.
There exists an $N\in\mathbb{N}$ such that $\dist[\cL(\{z_1,\dots,z_m\}),\cL_n] < \alpha/\beta$ for all $n\ge N$. Then it
follows from Theorem \ref{bound} that $\mathcal{N}\cap\sigma(S_n)=\varnothing$ for all $n\ge N$.
\end{proof}

For $z\in \sigma(A)\backslash\mathcal{Q}(\sigma_{\ess}(A))$ with
$\dist[z,\big(\mathcal{Q}(\sigma_{\ess}(A))\cup \sigma(A)\cup \sigma(A^*)\big)\backslash\{z\}] = \delta$,
we denote by $\mathcal{M}_n(\{z\},r)$ the spectral subspace of $S_n$ associated to those eigenvalues enclosed by the circle
$\Gamma$ with center $z$ and radius $r>0$. We will always assume that $r<\delta$, and that $\Gamma\cap\mathbb{R}=\varnothing$ if $z\notin\mathbb{R}$ and that $\Gamma$ does not pass through zero if $z\in\mathbb{R}$. The corresponding spectral projection we denote by $Q_n(\{z\},r)$. It will be useful to extend $Q_n(\{z\},r)$ in the following way
\begin{displaymath}
\hat{Q}_n(\{z\},r):=Q_n(\{z\},r) \left(\begin{array}{cc}
P_n & 0\\
0 & P_n
\end{array} \right):\mathcal{H}\oplus\cH\to\cL_n\oplus\cL_n,
\end{displaymath}
therefore $\range(\hat{Q}_n(\{z\},r)) = \range(Q_n(\{z\},r)) = \mathcal{M}_n(\{z\},r)$. By Lemma \ref{eigspaces} we have $z\in\sigma(T)$ with corresponding
spectral subspace given by $\mathcal{M}(\{z\})$ (see \eqref{imaginary} and \eqref{real}) which is the range of the spectral projection $Q(z)$ (see \eqref{projection1}).

\begin{theorem}\label{th}
Let $(\cL_n)\in\Lambda$, $z\in \sigma(A)\backslash\mathcal{Q}(\sigma_{\ess}(A))$ and fix $r,\Gamma$ as above. For all
sufficiently large $n\in\mathbb{N}$, we have $\Gamma\subset\rho(S_n)$ and
$\dim \mathcal{M}_n(\{z\},r) = \dim \mathcal{M}(\{z\})$. Moreover, we have
$\hat{Q}_n(\{z\},r)\stackrel{s}{\longrightarrow}Q(z)$ as $n\to\infty$.
\end{theorem}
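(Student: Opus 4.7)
My plan splits the proof into three stages, built on Theorem \ref{bound} and the block-resolvent formula \eqref{proTn}, with Stage 3 being the main obstacle.

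\emph{Stage 1 (Uniform resolvent bound on $\Gamma$).} The plan is to choose $\varepsilon>0$ small enough that $[\mathcal{Q}(\sigma_\ess(A))]_\varepsilon\cap\Gamma=\emptyset$ and $\sigma(A)\setminus[\sigma_\ess(A)]_\varepsilon=\{z_1,\dots,z_m\}$ contains $z$. The functions $\alpha$ and $\beta$ from \eqref{albe} are continuous, so on the compact curve $\Gamma$ they are bounded below and above by positive constants, and $(\cL_n)\in\Lambda$ forces $\delta(\cL(\{z_1,\dots,z_m\}),\cL_n)\to 0$. Theorem \ref{bound} will then produce a constant $c>0$ and $N\in\mathbb{N}$ with
\[
\|P_n(A-w)(A^*-w)P_n\psi\|\ge c\|P_n\psi\|\quad\text{for all }w\in\Gamma,\ n\ge N,\ \psi\in\mathcal{H}.
\]
Hence $[P_n(A^*-w)(A-w)|_{\cL_n}]^{-1}$ exists on $\cL_n$ with norm $\le c^{-1}$, and \eqref{proTn} delivers both $\Gamma\subset\rho(S_n)$ and a uniform bound on $\sup_{w\in\Gamma}\|(S_n-w)^{-1}\|$ for $n\ge N$.

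\emph{Stage 2 (Strong convergence of projections).} For fixed $w\in\Gamma$ and $\psi\in\mathcal{H}$ I would set $\phi=(A^*-w)^{-1}(A-w)^{-1}\psi$ and $\phi_n=[P_n(A^*-w)(A-w)|_{\cL_n}]^{-1}P_n\psi$. The identity $P_n(A^*-w)(A-w)(\phi_n-P_n\phi)=P_n(A^*-w)(A-w)(I-P_n)\phi$ together with Stage 1 will give $\|\phi_n-P_n\phi\|\le c^{-1}\|(A^*-w)(A-w)\|\,\|(I-P_n)\phi\|\to 0$, and combined with $P_n\phi\to\phi$ this yields $\phi_n\to\phi$. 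Running the same argument with $A^*A\psi$ in place of $\psi$ handles the remaining nontrivial entry of \eqref{proTn}, so $(S_n-w)^{-1}(P_n\oplus P_n)\to(T-w)^{-1}$ strongly for each $w\in\Gamma$. Uniformity in $w$ comes from the resolvent identity and the Stage 1 bound, which together make the family equi-Lipschitz in $w$, so dominated convergence under the contour integrals defining $\hat Q_n(\{z\},r)$ (via \eqref{proTn}) and $Q(z)$ (via \eqref{projection1}) will give $\hat Q_n(\{z\},r)\stackrel{s}{\longrightarrow}Q(z)$.

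\emph{Stage 3 (Dimension equality).} The lower bound $\dim\mathcal{M}_n(\{z\},r)\ge\dim\mathcal{M}(\{z\})$ is immediate from Stage 2: a basis $v_1,\dots,v_d$ of $\mathcal{M}(\{z\})$ gives $\hat Q_n(\{z\},r)v_j\in\mathcal{M}_n(\{z\},r)$ with $\hat Q_n(\{z\},r)v_j\to v_j$, and linear independence is preserved for large $n$. The reverse inequality is the main obstacle, since strong convergence of projections to a finite-rank limit does not in general control rank from above. My plan here is to enlarge $\cL_n$ to $\tilde\cL_n:=\cL_n+\cL(\{z_1,\dots,z_m\})$, so that Lemma \ref{mult} applies directly to $S_{\tilde\cL_n}$ and pins down $\dim\mathcal{M}_{\tilde\cL_n}(\{z\},r)$ exactly via \eqref{imaginary}--\eqref{real}, and then to transport this count back to $S_n$ by viewing $S_n$ as a perturbation of $S_{\tilde\cL_n}$ whose size is governed by $\delta(\cL(\{z_1,\dots,z_m\}),\cL_n)\to 0$. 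The uniform resolvent bound on $\Gamma$ from Stage 1 will provide the slack for a Kato-type stability argument for the total algebraic multiplicity inside $\Gamma$, carried out on the finite-dimensional block space $\tilde\cL_n\oplus\tilde\cL_n$. Making this perturbation estimate quantitative is the principal technical hurdle.
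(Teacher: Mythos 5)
Stages 1 and 2 track the paper closely: Theorem \ref{bound} gives the uniform bound $\|[P_n(A-w)(A^*-w)|_{\cL_n}]^{-1}\|\le c$ on $\Gamma$ once $\delta(\cL(\{z_1,\dots,z_m\}),\cL_n)$ is small, and the strong convergence of $\hat Q_n(\{z\},r)$ to $Q(z)$ is obtained by comparing \eqref{proTn} with \eqref{mult0} under the contour integral and pushing the pointwise convergence to uniform convergence on the compact curve $\Gamma$; the paper does the same, differing only in how uniformity is argued.

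Stage 3 is where the proposal has a genuine gap, and you have correctly located where the difficulty lies. The plan is to enlarge $\cL_n$ to $\tilde\cL_n := \cL_n + \cL(\{z_1,\dots,z_m\})$, invoke Lemma \ref{mult} on $S_{\tilde\cL_n}$, and then ``transport the count back to $S_n$ by viewing $S_n$ as a perturbation of $S_{\tilde\cL_n}$.'' But $S_n$ acts on $\cL_n\oplus\cL_n$ while $S_{\tilde\cL_n}$ acts on $\tilde\cL_n\oplus\tilde\cL_n$, and generically $\dim\tilde\cL_n>\dim\cL_n$ for all $n$ (smallness of $\delta(\cL(\{z_1,\dots,z_m\}),\cL_n)$ does not make $\cL(\{z_1,\dots,z_m\})$ a subset of $\cL_n$). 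Operators on spaces of different dimensions are not perturbations of one another in any sense that supports a Kato rank-stability argument, and any ad hoc extension of $S_n$ to the larger space (e.g.\ by zero) injects spurious spectrum whose location relative to $\Gamma$ you would then have to control, reintroducing exactly the difficulty you were trying to eliminate. The total trace/rank also cannot match: $\sigma(S_{\tilde\cL_n})$ counted with multiplicity has $2\dim\tilde\cL_n$ points versus $2\dim\cL_n$ for $\sigma(S_n)$.

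The device the paper uses, and which your argument is missing, is a homotopy that \emph{rotates} rather than \emph{enlarges} the subspace while keeping the dimension fixed. Writing $\cL(\{z_1,\dots,z_m\})=\Span\{\phi_1,\dots,\phi_s\}$, one sets $\psi_{n,j}(t)=tP_n\phi_j+(1-t)\phi_j$ and (by \cite[Lemma 3.3]{bost}) picks complementary vectors $\psi_{n,s+1},\dots,\psi_{n,\tilde n}\in\cL_n$ so that $\cL_n(t):=\Span\{\psi_{n,1}(t),\dots,\psi_{n,s}(t),\psi_{n,s+1},\dots,\psi_{n,\tilde n}\}$ has dimension $\tilde n=\dim\cL_n$ for every $t\in[0,1]$, with $\cL_n(1)=\cL_n$ and $\cL_n(0)\supseteq\cL(\{z_1,\dots,z_m\})$. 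Because $\delta(\cL(\{z_1,\dots,z_m\}),\cL_n(t))\le\delta(\cL(\{z_1,\dots,z_m\}),\cL_n)$, Theorem \ref{bound} gives $\Gamma\subset\rho(S_n(t))$ for all $t\in[0,1]$ and $n$ large; the spectral projection inside $\Gamma$ then depends continuously on $t$, so its rank is constant in $t$ by \cite[Lemma 1.4.10]{katopert}. At $t=0$, Lemma \ref{mult} pins down the rank to $\dim\mathcal{M}(\{z\})$. This is how the upper bound on $\dim\mathcal{M}_n(\{z\},r)$ is obtained without ever leaving the fixed-dimensional setting; your enlargement step needs to be replaced by this construction.
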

\begin{proof}
Choose $\varepsilon>0$ sufficiently small so that $\Gamma\cap[\mathcal{Q}(\sigma_{\ess}(A))]_\varepsilon=\varnothing$.
Let $\sigma(A)\backslash[\sigma_{\ess}(A)]_\varepsilon = \{z_1,\dots,z_m\}$. Note that
$z\in\{z_1,\dots,z_m\}$ follows from Corollary \ref{cor2}. Let $\phi_1,\dots,\phi_s$ be an orthonormal basis for
$\cL(\{z_1,\dots,z_m\})$. Set $\psi_{n,j}=P_n\phi_j$ and $\psi_{n,j}(t) = t\psi_{n,j} + (1-t)\phi_j$ where $t\in[0,1]$.
There exists an $N_0\in\mathbb{N}$, such that whenever $n>N_0$ there are vectors
$\{\psi_{n,s+1},\dots,\psi_{n,\tilde{n}}\}\in\mathcal{L}_n$ (where $\tilde{n}=\dim(\cL_n)$) for which
$\{\psi_{n,1}(t),\dots,\psi_{n,s}(t),\psi_{n,s+1},\psi_{n,\tilde{n}}\}$ is a linearly independent set for all
$t\in[0,1]$; see \cite[Lemma 3.3]{bost}. Let $P_n(t)$ be the orthogonal projection onto
$\cL_n(t):=\Span\{\psi_{n,1}(t),\dots,\psi_{n,s}(t),\psi_{n,s+1},\psi_{n,\tilde{n}}\}$, and consider the following family of
block operator matrices
\begin{displaymath}
S_n(t) := \left(
\begin{array}{cc}
P_n(t)(A+A^*) & -P_n(t)A^*A\\
I & 0
\end{array} \right):\cL_n(t)\oplus\cL_n(t)\to\cL_n(t)\oplus\cL_n(t).
\end{displaymath}
With $\alpha(\cdot)$ and $\beta(\cdot)$ given by \eqref{albe}, set $\alpha = \min\{\alpha(w):w\in\Gamma\}$ and $\beta = \max\{\beta(w):w\in\Gamma\}$.
It follows from the fact that $\cL(\{z_1,\dots,z_m\})$ is finite dimensional and $(\cL_n)\in\Lambda$, that there exists an $N_1\in\mathbb{N}$ such that
$\delta(\cL(\{z_1,\dots,z_m\}),\cL_n) =: \delta_n  < \alpha/\beta$ for all $n\ge N_1$. It is easily verified that
$\delta(\cL(\{z_1,\dots,z_m\}),\cL_n(t))\le\delta(\cL(\{z_1,\dots,z_m\}),\cL_n)$ for any $t\in[0,1]$. It now follows from
Theorem \ref{bound} that $\Gamma\subset\rho(S_n(t))$ for all $t\in[0,1]$ and $n\ge N_0,N_1$.
The first assertion follows.

Evidently, the spectral projection associated to $S_n(t)$ and those elements from $\sigma(S_n(t))$ enclosed by $\Gamma$
depends continuously on $t\in[0,1]$. The second assertion now follows from Lemma \ref{mult} and \cite[Lemma 1.4.10]{katopert}.

For the last assertion let $\psi\in\mathcal{H}$. Then using \eqref{mult0} and \eqref{proTn} we obtain
\begin{align*}
\left\Vert\left[\hat{Q}_n(\{z\},r) - Q(z)\right]\left(
\begin{array}{c}
\psi\\
0
\end{array} \right)\right\Vert
&\le \frac{1}{2\pi}\int_{\Gamma}\left\Vert\left(
\begin{array}{cc}
-w[P_nA(w)]^{-1}P_n\psi+wA(w)^{-1}\psi\\
-[P_nA(w)]^{-1}P_n\psi+A(w)^{-1}\psi
\end{array} \right)\right\Vert~dw\\
&\le \frac{1}{2\pi}\int_{\Gamma}(1+\vert w\vert)\Vert
[P_nA(w)]^{-1}P_n\psi-A(w)^{-1}\psi\Vert~dw.
\end{align*}
Set $d=\min\{\dist[z,[\mathcal{Q}(\sigma_{\ess}(A))]_\varepsilon]^{2}:z\in\Gamma\}$. Since
$\delta_n\to 0$, we have $1/d(\alpha - \beta\delta_n)\le 2/d\alpha =:c$ for all sufficiently large $n\in\mathbb{N}$.
Combining this estimate with Theorem \ref{bound} yields
\begin{displaymath}
\Vert[P_n(A - z)(A^*-z)|_{\mathcal{L}_n}]^{-1}\Vert\le c
\quad\textrm{for all~} z\in\Gamma\textrm{~and sufficiently large~}n\in\mathbb{N}.
\end{displaymath}
Now consider the following sequence of functions
\begin{align*}
g_n(w) &:= \Vert[P_nA(w)]^{-1}P_n\psi-A(w)^{-1}\psi\Vert\\
&\le\Vert[P_nA(w)]^{-1}P_n\psi-P_nA(w)^{-1}\psi\Vert+\Vert(I-P_n)A(w)^{-1}\psi\Vert\\
&\le c\Vert P_n\psi-P_nA(w)P_nA(w)^{-1}\psi\Vert+\Vert(I-P_n)A(w)^{-1}\psi\Vert
\end{align*}
with $\Dom(g_n) = \Gamma$. It is clear that the functions $g_n$ converge pointwise to zero. For any fixed
$w\in\Gamma$ and sequence $(w_n)\in\Gamma$ with $w_n\rightarrow w$, we have
\begin{equation}\label{gcon}
g_n(w_n)\le c\Vert P_n\psi-P_nA(w_n)P_nA(w_n)^{-1}\psi\Vert+\Vert(I-P_n)A(w_n)^{-1}\psi\Vert.
\end{equation}
Clearly, the right hand side of \eqref{gcon} converges to zero, from which it follows that
the functions $g_n$ converge uniformly to zero (see \cite[Theorem 7.3.5]{stri}). Therefore
\begin{align*}
\left\Vert\left[\hat{Q}_n(z) - Q(z)\right]\left(
\begin{array}{c}
\psi\\
0
\end{array} \right)\right\Vert
\to 0,\quad\textrm{and similarly}\quad\left\Vert\left[\hat{Q}_n(z) - Q(z)\right]\left(
\begin{array}{c}
0\\
\psi
\end{array} \right)\right\Vert
\to 0.
\end{align*}
\end{proof}

\begin{corollary}
Let $(\cL_n)\in\Lambda$, then $(\lim \sigma(S_n))\backslash\mathcal{Q}(\sigma_{\ess}(A))=\big(\sigma_{\dis}(A)\cup \sigma_{\dis}(A^*)\big)\backslash\mathcal{Q}(\sigma_{\ess}(A))$. If $A$ is self-adjoint then
$(\lim \sigma(S_n))\backslash\mathcal{Q}(\sigma_{\ess}(A))=\sigma_{\dis}(A)$.
\end{corollary}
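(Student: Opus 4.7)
The inclusion ``$\subseteq$'' is precisely Corollary~\ref{lim2b}, so only the reverse inclusion requires work. The plan is to fix $z\in\big(\sigma_{\dis}(A)\cup\sigma_{\dis}(A^*)\big)\backslash\mathcal{Q}(\sigma_{\ess}(A))$ and exhibit eigenvalues of $S_n$ converging to it, handling the cases $z\in\sigma_{\dis}(A)$ and $z\in\sigma_{\dis}(A^*)\backslash\sigma_{\dis}(A)$ separately, then deriving the self-adjoint statement as a corollary.

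For $z\in\sigma_{\dis}(A)\backslash\mathcal{Q}(\sigma_{\ess}(A))$, I would apply Theorem~\ref{th} directly. By Lemma~\ref{eigspaces}, $z\in\sigma_{\dis}(T)$ with $\dim\mathcal{M}(\{z\})\ge 1$, so Theorem~\ref{th} gives $\dim\mathcal{M}_n(\{z\},r)\ge 1$ and in particular $\sigma(S_n)\cap\{w:|w-z|\le r\}\ne\varnothing$ for all sufficiently large $n$ and all admissible $r>0$. A standard diagonal argument along a sequence $r_k\downarrow 0$ then yields $z_n\in\sigma(S_n)$ with $z_n\to z$, so $z\in\lim_n\sigma(S_n)$.

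For $z\in\sigma_{\dis}(A^*)\backslash\sigma_{\dis}(A)$, the plan is to exploit the symmetry between $A$ and $A^*$. Since $A$ is normal, $A^*A = AA^*$, and hence
\begin{displaymath}
T=\left(\begin{array}{cc} A+A^* & -A^*A \\ I & 0 \end{array}\right)=\left(\begin{array}{cc} A^*+(A^*)^* & -(A^*)^*A^* \\ I & 0 \end{array}\right),
\end{displaymath}
with the analogous identity for $S_{\cL_n}$; the block operators built from $A$ coincide with those built from $A^*$. Because $A^*$ is normal with $\sigma_{\ess}(A^*)=\overline{\sigma_{\ess}(A)}$, a direct inspection of the definition of $\mathcal{Q}$ gives $\mathcal{Q}(\sigma_{\ess}(A^*))=\mathcal{Q}(\sigma_{\ess}(A))$. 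Applying Theorem~\ref{th} with $A^*$ in place of $A$ (and the same sequence $(\cL_n)$) then reduces the situation to the previous case and produces $z\in\lim_n\sigma(S_n)$.

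For the self-adjoint statement, $A=A^*$ gives $\sigma_{\dis}(A)\cup\sigma_{\dis}(A^*)=\sigma_{\dis}(A)\subset\mathbb{R}$. By Lemma~\ref{cor0}, $\mathcal{Q}(\sigma_{\ess}(A))\cap\mathbb{R}=\sigma_{\ess}(A)\cap\mathbb{R}=\sigma_{\ess}(A)$, and since $\sigma_{\dis}(A)\cap\sigma_{\ess}(A)=\varnothing$ by the definition of the discrete spectrum, one gets $\sigma_{\dis}(A)\backslash\mathcal{Q}(\sigma_{\ess}(A))=\sigma_{\dis}(A)$, so the second assertion follows from the first. I expect the only subtle point to be the symmetry observation in the second case; it is not a real technical obstacle, but it is the one step that is not entirely mechanical, since Theorem~\ref{th} is stated only for $z\in\sigma(A)$ and one must recognise that normality makes the linearisation $T$ (and hence $S_{\cL_n}$) invariant under the swap $A\leftrightarrow A^*$.
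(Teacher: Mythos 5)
Your proposal is correct and follows essentially the same route as the paper: Corollary~\ref{lim2b} gives one inclusion, the dimension equality in Theorem~\ref{th} (together with Lemma~\ref{eigspaces}) gives the other, and Lemma~\ref{cor0} closes the self-adjoint case. Your explicit handling of $z\in\sigma_{\dis}(A^*)\backslash\sigma_{\dis}(A)$ via the $A\leftrightarrow A^*$ symmetry of $T$ and $S_{\cL}$ fills in a detail the paper leaves implicit (the paper's terse proof implicitly relies on this, or equivalently on applying Theorem~\ref{th} to $\overline{z}\in\sigma_{\dis}(A)$ and using the conjugation symmetry of $\sigma(S_n)$), but the underlying argument is the same.
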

\begin{proof}
The first assertion is an immediate consequence of Corollary \ref{lim2b} and the second assertion of Theorem \ref{th}. The second assertion follows from  Corollary \ref{lim2b}, the second assertion of Theorem \ref{th},
and the fact that $\sigma(A)\subset\mathbb{R}$ so that $\sigma_{\dis}(A)\cap\mathcal{Q}(\sigma_{\ess}(A))=\emptyset$.
\end{proof}

\subsection{Convergence Rates}

\begin{example}
Let $(\phi_n)_{n\in\mathbb{N}}$ form an orthonormal basis for $\mathcal{H}$, $P\psi = \langle\psi,\phi_1\rangle\phi_1$ and $A = I - P$. Then $A$ is a bounded self-adjoint
operator with $\sigma_{\ess}(A) = \{1\}$ and $\sigma_{\dis}(A) = \{0\}$. Let $\cL_n = \Span\{\phi_2,\dots\phi_{n-1},\psi_n\}$ where
$\psi_n = \alpha_n\phi_1 + \varepsilon_n\phi_n$, $\alpha_n,\varepsilon_n\in\mathbb{R}$, $\alpha_n^2 + \varepsilon_n^2 = 1$ and $\varepsilon_n\to 0$. Then $\delta(\cL(\{0\}),\cL_n) = \dist
[\phi_1,\cL_n] = \varepsilon_n$, $\sigma(S_n) = \{\varepsilon_n^2\pm i(\varepsilon_n^2 - \varepsilon_n^4)^{\frac{1}{2}},1\}$, and therefore $\dist[0,\sigma(S_n)] = \varepsilon_n$.
For the spectral subspaces we have
\begin{displaymath}
\mathcal{M}(\{0\}) = \Span\left\{\left(
\begin{array}{c}
0\\
\phi_1
\end{array} \right),\left(
\begin{array}{c}
\phi_1\\
0
\end{array} \right)\right\}
\end{displaymath}
and for any $\varepsilon_n<r<1$
\begin{align*}
\mathcal{M}_n(\{0\},r)&=
\Span\left\{\left(
\begin{array}{c}
\big(\varepsilon_n^2\pm i(\varepsilon_n^2 - \varepsilon_n^4)^{\frac{1}{2}}\big)\psi_n\\
\psi_n
\end{array} \right),\left(
\begin{array}{c}
\big(\varepsilon_n^2\pm i(\varepsilon_n^2 - \varepsilon_n^4)^{\frac{1}{2}}\big)\psi_n\\
\psi_n
\end{array} \right)\right\}\\
&=
\Span\left\{\left(
\begin{array}{c}
\psi_n\\
0
\end{array} \right),\left(
\begin{array}{c}
0\\
\psi_n
\end{array} \right)\right\}.
\end{align*}
from which we easily obtain $\hat\delta(\mathcal{M}_n(\{0\},r),\mathcal{M}(\{0\})) =\varepsilon_n$.
\end{example}

The next theorem shows that $\hat\delta(\mathcal{M}_n(\{z\},r),\mathcal{M}(\{z\})) = \mathcal{O}(\delta(\cL(\{z,\overline{z}\}),\cL_n))$ and
$\dist[z, \sigma(S_n)] = \mathcal{O}(\delta(\cL(\{z,\overline{z}\}),\cL_n))$. The example above shows that these convergence rates are sharp. We also note that this
eigenvalue convergence rate has previously been observed in computations for a bounded self-adjoint operator (see \cite[Section 3.2]{bo2}).

For a $z\in \sigma(A)\backslash\mathcal{Q}(\sigma_{\ess}(A))$ and $r,\Gamma$ as above, let $\varepsilon>0$ be as in the proof of Theorem \ref{th}. We set $M_1 = \max\{\Vert A(w)\Vert:w\in\Gamma\}$,
$M_2 = \max\{\Vert A(w)^{-1}A^*A\Vert:w\in\Gamma\}$, $c$ as in the proof of Theorem \ref{th}, and
\begin{displaymath}
M_3 = \frac{(\vert z\vert^2 + \vert z\vert + r\vert z\vert)(cM_1 + 1)}{r} + \frac{(1 + \vert z\vert - r)(cM_1M_2r^2 + cM_1\vert z\vert^2+\vert z\vert^2)}{(\vert z\vert - r)r}
+ \frac{r}{\vert z\vert - r}.
\end{displaymath}

\begin{theorem}\label{lim3}
Let $(\cL_n)\in\Lambda$, $z\in \sigma(A)\backslash\mathcal{Q}(\sigma_{\ess}(A))$, $\delta(\cL(\{z,\overline{z}\}),\cL_n) =\varepsilon_n$ and fix $r,\Gamma$ as above, then for all sufficiently large $n\in\mathbb{N}$
\begin{displaymath}
\hat\delta(\mathcal{M}_n(\{z\},r),\mathcal{M}(\{z\})) \le \frac{M_3\varepsilon_n}{1 - M_3\varepsilon_n}\quad\textrm{and}
\quad\dist[z, \sigma(S_n)] \le (1+\vert z\vert)(1+\vert z\vert + r)^{\frac{1}{2}}\frac{M_3\varepsilon_n}{1 - M_3\varepsilon_n}.
\end{displaymath}
\end{theorem}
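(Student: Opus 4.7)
The plan is to establish a quantitative version of the strong convergence $\hat Q_n\to Q$ from Theorem~\ref{th} via a Cauchy integral estimate, deduce the gap bound from it, and then pass to the eigenvalue estimate by conjugation and the spectral radius.

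First, I would start from
\[
\hat Q_n(\{z\},r)-Q(z)=-\frac{1}{2\pi i}\int_\Gamma\bigl[(S_n-w)^{-1}\mathrm{diag}(P_n,P_n)-(T-w)^{-1}\bigr]\,dw
\]
and expand the integrand entrywise using \eqref{proTn} and \eqref{mult0}. Every entry is a polynomial in $w$ and $w^{-1}$ times the single quantity $R_n(w):=[P_nA(w)]^{-1}P_n-A(w)^{-1}$, possibly augmented by $A^*AP_n$ or $A(w)^{-1}A^*A(I-P_n)$ factors, where $A(w):=(A-w)(A^*-w)$. The identity from the proof of Theorem~\ref{th},
\[
R_n(w)\psi=[P_nA(w)]^{-1}P_nA(w)(I-P_n)A(w)^{-1}\psi-(I-P_n)A(w)^{-1}\psi,
\]
combined with the uniform bounds $\|[P_nA(w)]^{-1}\|\le c$ and $\|A(w)\|\le M_1$ on $\Gamma$, reduces everything to bounds on $\|(I-P_n)A(w)^{-1}\psi\|$.

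Second, I would specialise the integrand to $\psi\in\mathcal{M}(\{z\})$. By Lemma~\ref{eigspaces}, such $\psi$ is a linear combination of $(z\phi,\phi)^T$ (or of $(\phi,0)^T$ and $(0,\phi)^T$ when $z\in\mathbb{R}$) with $\phi\in\cL(\{z,\overline z\})$. The key observation is that both $A^*A$ and $A(w)$ act as scalars on $\cL(\{z,\overline z\})$: $A^*A=|z|^2I$ and $A(w)=(z-w)(\overline z-w)I$, so in particular $A(w)^{-1}$ preserves $\cL(\{z,\overline z\})$. This scalar action lets the $A^*A$-factors in each of the four resolvent-difference entries be combined algebraically with the $R_n(w)$-factors, so that every surviving term carries a factor of $(I-P_n)\phi$ whose norm is at most $\varepsilon_n\|\phi\|$. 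Multiplying by the length $2\pi r$ of $\Gamma$ and inserting the bounds $c$, $M_1$, $M_2:=\max_\Gamma\|A(w)^{-1}A^*A\|$ and $|w|\in[|z|-r,|z|+r]$ produces exactly the constant $M_3$ of the statement, giving $\|(\hat Q_n-Q)\psi\|\le M_3\varepsilon_n\|\psi\|$ for $\psi\in\mathcal{M}(\{z\})$. Since $Q\psi=\psi$ and $\hat Q_n\psi\in\mathcal{M}_n(\{z\},r)$, this yields $\delta(\mathcal{M}(\{z\}),\mathcal{M}_n(\{z\},r))\le M_3\varepsilon_n$; because the two subspaces have the same finite dimension by Theorem~\ref{th}, $\hat\delta$ coincides with $\delta$ for $M_3\varepsilon_n<1$, and $M_3\varepsilon_n\le M_3\varepsilon_n/(1-M_3\varepsilon_n)$ gives the first claim.

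Third, for the eigenvalue estimate I would set $V_n:=\hat Q_n|_{\mathcal{M}(\{z\})}:\mathcal{M}(\{z\})\to\mathcal{M}_n(\{z\},r)$. The gap bound makes $V_n$ an isomorphism for large $n$ with $\|V_n^{-1}\|\le 1/(1-M_3\varepsilon_n)$, and the conjugate $V_n^{-1}(S_n|_{\mathcal{M}_n(\{z\},r)})V_n$ on $\mathcal{M}(\{z\})$ has the same spectrum as $S_n|_{\mathcal{M}_n(\{z\},r)}$, namely $\sigma(S_n)\cap\{w:|w-z|<r\}$. Using $(T-z)Q=0$ on $\mathcal{M}(\{z\})$, one has $(S_n-z)\hat Q_n=-\frac{1}{2\pi i}\int_\Gamma(w-z)D_n(w)\,dw$ with $D_n(w):=(S_n-w)^{-1}\mathrm{diag}(P_n,P_n)-(T-w)^{-1}$; bounding by $\|V_n^{-1}\|$ times the integrand with the extra factor $|w-z|=r$, and using the norm conversion between $\|\phi\|$ and $\|\psi\|=\|(z\phi,\phi)^T\|$, assembles into the prefactor $(1+|z|)(1+|z|+r)^{1/2}$. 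The spectral-radius inequality $|\mu-z|\le\|V_n^{-1}(S_n|_{\mathcal{M}_n})V_n-zI\|$ for every $\mu\in\sigma(S_n)\cap\{w:|w-z|<r\}$ then completes the proof.

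The main obstacle is the algebra in the second paragraph: obtaining a constant of precisely the shape $M_3$ rather than of the looser form $\|A^*A\|\varepsilon_n$ requires cancelling the order-one $A^*A$ contributions in each of the four resolvent-difference entries by exploiting the scalar action of $A^*A$ on $\cL(\{z,\overline z\})$ before invoking $\|(I-P_n)\phi\|\le\varepsilon_n\|\phi\|$. The characteristic $1/(1-M_3\varepsilon_n)$ factor in both stated bounds arises from $\|V_n^{-1}\|$, whose bound is itself controlled by the gap estimate being proved.
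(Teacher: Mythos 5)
Your first and second paragraphs reproduce the paper's argument in substance: expand the resolvent difference entrywise via \eqref{mult0} and \eqref{proTn}, exploit the fact that $A(w)$ and $A^*A$ act scalarly on $\cL(\{z,\overline z\})$ so that every surviving term carries an $(I-P_n)\phi$ factor, integrate over $\Gamma$, and conclude $\delta(\mathcal{M}(\{z\}),\mathcal{M}_n(\{z\},r))\le M_3\varepsilon_n$. Your appeal to equality of dimensions to pass to $\hat\delta$ also matches (the paper cites Kato's 1958 lemma for the $1/(1-M_3\varepsilon_n)$ step; your remark that $\delta$ is symmetric for equal-dimensional subspaces with small gap would give the even sharper $\hat\delta\le M_3\varepsilon_n$, but both routes reach the stated bound).

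The third paragraph, however, is a genuinely different route from the paper's. The paper proceeds directly: choose a unit eigenvector $(\hat z\psi,\psi)^T\in\mathcal{M}_n(\{z\},r)$, use the gap bound to find a nearby $(z\phi,\phi)^T\in\mathcal{M}(\{z\})$, and compare components to bound $|\hat z-z|$; the factor $(1+|z|)$ comes from the two component comparisons and the second factor from normalisation $\|\psi\|=(1+|\hat z|^2)^{-1/2}$ with $|\hat z|\le|z|+r$. Your conjugation-plus-spectral-radius argument via $V_n=\hat Q_n|_{\mathcal{M}(\{z\})}$ is valid and has the advantage of controlling \emph{all} eigenvalues inside $\Gamma$ at once, but it does not actually assemble into the paper's prefactor. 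Bounding $\bigl\|-\tfrac{1}{2\pi i}\int_\Gamma(w-z)D_n(w)\,dw\bigr\|$ introduces the extra factor $|w-z|=r$ inside the Cauchy integral, which together with $\|V_n^{-1}\|\le 1/(1-M_3\varepsilon_n)$ gives a constant of the shape $rM_3/(1-M_3\varepsilon_n)$ times $\varepsilon_n$, not $(1+|z|)(1+|z|+r)^{1/2}M_3\varepsilon_n/(1-M_3\varepsilon_n)$. The ``norm conversion between $\|\phi\|$ and $\|(z\phi,\phi)^T\|$'' you invoke is precisely what drives the paper's prefactor in the direct-comparison route, but it plays no analogous role in the conjugated-operator-norm computation, so the claim that your approach ``assembles into'' that specific constant is unjustified. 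Both routes establish the stated $\mathcal{O}(\varepsilon_n)$ rate, which is the substantive content of the theorem, but the constants they produce are of different form. One further small point: your use of $(T-z)Q=0$ on $\mathcal{M}(\{z\})$ holds only for $z\notin\mathbb{R}$ (for real $z$, $T|_{\mathcal{M}(\{z\})}$ is a Jordan block, not $zI$); the paper makes the same restriction explicit and handles $z\in\mathbb{R}$ ``similarly,'' so this should be flagged in your write-up as well.
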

\begin{proof}
We assume that $z\notin\mathbb{R}$, the case where $z\in\mathbb{R}$ being treated similarly. From Lemma \ref{eigspaces} we have
\begin{displaymath}
\mathcal{M}(\{z\}) = \Span\left\{\left(
\begin{array}{c}
z\phi_1\\
\phi_1
\end{array} \right),\dots,\left(
\begin{array}{c}
z\phi_{k+m}\\
\phi_{k+m}
\end{array} \right)\right\}
\end{displaymath}
where $\cL(\{z\})=\Span\{\phi_1,\dots\phi_k\}$ and $\cL(\{\overline{z}\})=\Span\{\phi_{k+1},\dots\phi_{k+m}\}$. Let
\begin{displaymath}
\left(
\begin{array}{c}
z\phi\\
\phi
\end{array} \right)\in\mathcal{M}(\{z\})\quad\textrm{with}\quad\left\Vert\left(
\begin{array}{c}
z\phi\\
\phi
\end{array} \right)\right\Vert = 1.
\end{displaymath}
Let $\varepsilon,c>0$ be as in the proof of Theorem \ref{th}, then for all sufficiently large $n\in\mathbb{N}$ we have
$\Gamma\subset\rho(S_n)$ and $\Vert[P_nA(z)|_{\mathcal{L}_n}]^{-1}\Vert\le c$ for all  $z\in\Gamma$.
Using \eqref{mult0} and \eqref{proTn} we obtain
\begin{align*}
\left\Vert\left[\hat{Q}_n(\{z\},r) - Q(z)\right]\left(
\begin{array}{c}
z\phi\\
0
\end{array} \right)\right\Vert
&\le \frac{\vert z\vert}{2\pi}\int_{\Gamma}(1+\vert w\vert)\Vert
[P_nA(w)|_{\cL_n}]^{-1}P_n\phi-A(w)^{-1}\phi\Vert~dw\\
&\le \big(r\vert z\vert^2 + r\vert z\vert + r^2\vert z\vert\big)\max_{w\in\Gamma}\Vert
[P_nA(w)|_{\cL_n}]^{-1}P_n\phi-A(w)^{-1}\phi\Vert,
\end{align*}
where
\begin{align*}
\Vert[P_nA(w)|_{\cL_n}]^{-1}P_n\phi-A(w)^{-1}\phi\Vert &\le c\Vert P_n\phi-P_nA(w)P_nA(w)^{-1}\phi\Vert+\Vert(I-P_n)A(w)^{-1}\phi\Vert\\
&\le c\Vert A(w)\phi - A(w)P_n\phi\Vert/r^2+\Vert(I-P_n)\phi\Vert/r^2\\
&\le \delta(\cL(\{z,\overline{z}\}),\cL_n)\Vert\phi\Vert(cM_1 + 1)/r^2
\end{align*}
Similarly we have
\begin{align*}
\left\Vert\left[\hat{Q}_n(\{z\},r) - Q(z)\right]\left(
\begin{array}{c}
0\\
\phi
\end{array} \right)\right\Vert
&\le \frac{1}{2\pi}\int_\Gamma(1+\vert w\vert^{-1})\Vert [P_nA(w)|_{\cL_n}]^{-1}P_nA^*AP_n\phi - A(w)^{-1}A^*A\phi\Vert~dw\\
&+ \frac{1}{2\pi}\int_\Gamma\vert w\vert^{-1}\Vert P_n\phi -\phi\Vert~dw\\
&\le\frac{r(1+\vert z\vert  - r)}{\vert z\vert - r}\max_{w\in\Gamma}\Vert [P_nA(w)|_{\cL_n}]^{-1}P_nA^*AP_n\phi - A(w)^{-1}A^*A\phi\Vert\\
&+ \frac{r\delta(\cL(\{z,\overline{z}\}),\cL_n)}{\vert z\vert - r}\Vert\phi\Vert,
\end{align*}
where
\begin{align*}
\Vert [P_nA(w)|_{\cL_n}]^{-1}P_nA^*AP_n\phi - A(w)^{-1}A^*A\phi\Vert &\le c\Vert A^*AP_n\phi - A(w)P_nA(w)^{-1}A^*A\phi\Vert\\
&+ \vert z\vert^2\Vert(I-P_n)\phi\Vert/r^2\\
&\le cM_1\Vert A(w)^{-1}A^*AP_n\phi - P_nA(w)^{-1}A^*A\phi\Vert\\
&+ \vert z\vert^2\Vert(I-P_n)\phi\Vert/r^2\\
&\le cM_1\Vert A(w)^{-1}A^*A(I-P_n)\phi\Vert\\
&+cM_1\Vert (I-P_n)A(w)^{-1}A^*A\phi\Vert+ \vert z\vert^2\Vert(I-P_n)\phi\Vert/r^2\\
&\le\delta(\cL(\{z,\overline{z}\}),\cL_n)\Vert\phi\Vert (cM_1M_2r^2 + cM_1\vert z\vert^2 + \vert z\vert^2)/r^2.
\end{align*}
Combining these estimates we have
\begin{align*}
\left\Vert\hat{Q}_n(z)\left(
\begin{array}{c}
z\phi\\
\phi
\end{array} \right)-
\left(
\begin{array}{c}
z\phi\\
\phi
\end{array} \right)
\right\Vert &= \left\Vert\left[\hat{Q}_n(z) - Q(z)\right]\left(
\begin{array}{c}
z\phi\\
\phi
\end{array} \right)\right\Vert = M_3\varepsilon_n,
\end{align*}
and therefore $\delta(\mathcal{M}(\{z\}),\mathcal{M}_n(\{z\},r))\le M_3\varepsilon_n$. From Theorem \ref{th} we have $\dim\mathcal{M}(\{z\}) = \dim\mathcal{M}_n(\{z\},r)$ which combined with \cite[Lemma 213]{kato} yields the estimate
\begin{displaymath}
\delta(\mathcal{M}_n(\{z\},r),\mathcal{M}(\{z\}))\le \frac{\delta(\mathcal{M}(\{z\}),\mathcal{M}_n(\{z\},r))}{1-\delta(\mathcal{M}(\{z\}),\mathcal{M}_n(\{z\},r))}.
\end{displaymath}
The first assertion follows.

For the second assertion we let
\begin{displaymath}
S_n\left(
\begin{array}{c}
\hat{z}\psi\\
\psi
\end{array} \right) =
\hat{z}\left(
\begin{array}{c}
\hat{z}\psi\\
\psi
\end{array} \right)\quad\textrm{for some}\quad\left(
\begin{array}{c}
\hat{z}\psi\\
\psi
\end{array} \right)\in\mathcal{M}_n(\{z\},r)\quad\textrm{with}\quad\left\Vert\left(
\begin{array}{c}
\hat{z}\psi\\
\psi
\end{array} \right)\right\Vert = 1.
\end{displaymath}
Then $\vert\hat{z}\vert\le\vert z\vert + r$ and for some $\phi\in\mathcal{L}(\{z,\overline{z}\})$ we have
\begin{displaymath}
\left(
\begin{array}{c}
z\phi\\
\phi
\end{array} \right) \in\mathcal{M}(\{z\})\quad\textrm{and}\quad\left\Vert\left(
\begin{array}{c}
z\phi\\
\phi
\end{array} \right)-\left(
\begin{array}{c}
\hat{z}\psi\\
\psi
\end{array} \right)\right\Vert\le\frac{M_3\varepsilon_n}{1 - M_3\varepsilon_n},
\end{displaymath}
from which the second assertion follows.\end{proof}

\begin{corollary}\label{susp}
Let $(\cL_n)\in\Lambda$, $z\in \sigma(A)\backslash\mathcal{Q}(\sigma_{\ess}(A))$, $\delta(\cL(\{z,\overline{z}\}),\cL_n) =\varepsilon_n$ and fix $r,\Gamma$ as above. Let
\begin{displaymath}
P_{+}\left(
\begin{array}{c}
u\\
v
\end{array}\right) = u,\quad P_{-}\left(
\begin{array}{c}
u\\
v
\end{array}\right) = v,\quad\textrm{and}\quad\mathcal{M}^\pm_n(\{z\},r) = \{P_\pm u:u\in\mathcal{M}_n(\{z\},r)\},
\end{displaymath}
then $\hat\delta(\mathcal{M}_n^\pm(\{z\},r),\cL(\{z,\overline{z}\})) = \mathcal{O}(\varepsilon_n)$.
\end{corollary}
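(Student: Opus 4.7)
The plan is to deduce the desired gap estimate $\hat\delta(\mathcal{M}_n^\pm(\{z\},r),\cL(\{z,\overline{z}\}))=\mathcal{O}(\varepsilon_n)$ by pushing the gap estimate $\hat\delta(\mathcal{M}_n(\{z\},r),\mathcal{M}(\{z\}))=\mathcal{O}(\varepsilon_n)$ established in Theorem \ref{lim3} through the contractive coordinate projections $P_\pm$. The key observation is that $P_\pm\mathcal{M}(\{z\})=\cL(\{z,\overline{z}\})$: when $z\in\mathbb{R}$ this is immediate from the description $\mathcal{M}(\{z\})=\cL(\{z,\overline{z}\})\oplus\cL(\{z,\overline{z}\})$ in \eqref{real}, and when $z\notin\mathbb{R}$ the description \eqref{imaginary} together with $z\neq 0$ (which is forced by the fact that $\Gamma$ avoids the origin) yields the same identity.

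The direction $\delta(\cL(\{z,\overline{z}\}),\mathcal{M}_n^\pm(\{z\},r))=\mathcal{O}(\varepsilon_n)$ is the easy one. Given a unit vector $\phi\in\cL(\{z,\overline{z}\})$, Lemma \ref{eigspaces} produces an explicit preimage $v\in\mathcal{M}(\{z\})$ of $\phi$ under $P_\pm$ with $\|v\|\le C(z)\|\phi\|$; for instance $v=(\phi,0)^T$ or $v=(0,\phi)^T$ when $z\in\mathbb{R}$, and $v=(\phi,\phi/z)^T$ or $v=(z\phi,\phi)^T$ when $z\notin\mathbb{R}$. Theorem \ref{lim3} then supplies $v_n\in\mathcal{M}_n(\{z\},r)$ with $\|v-v_n\|=\mathcal{O}(\varepsilon_n)\|v\|$, and since $P_\pm$ is a contraction we obtain $\dist(\phi,\mathcal{M}_n^\pm(\{z\},r))\le\|P_\pm v-P_\pm v_n\|\le\|v-v_n\|=\mathcal{O}(\varepsilon_n)$.

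The reverse direction $\delta(\mathcal{M}_n^\pm(\{z\},r),\cL(\{z,\overline{z}\}))=\mathcal{O}(\varepsilon_n)$ is the substantive step. Given a unit $\psi\in\mathcal{M}_n^\pm(\{z\},r)$ one needs a lift $u\in\mathcal{M}_n(\{z\},r)$ with $P_\pm u=\psi$ and $\|u\|\le C\|\psi\|$ with $C$ independent of $n$; the minimum-norm lift achieves the ratio $1/\sigma_n$, where $\sigma_n$ is the smallest strictly positive singular value of $P_\pm|_{\mathcal{M}_n(\{z\},r)}$. Since the corresponding quantity for $P_\pm|_{\mathcal{M}(\{z\})}$ is bounded below by an explicit positive constant depending only on $z$ (namely $1$ when $z\in\mathbb{R}$, and $|z|/\sqrt{|z|^2+1}$ or $1/\sqrt{|z|^2+1}$ when $z\notin\mathbb{R}$), a finite-dimensional perturbation argument built on $\hat\delta(\mathcal{M}_n(\{z\},r),\mathcal{M}(\{z\}))\to 0$ and the dimension equality $\dim\mathcal{M}_n(\{z\},r)=\dim\mathcal{M}(\{z\})$ from Theorem \ref{th} shows that $\sigma_n$ stays bounded below for $n$ large. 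Once such a lift is in hand, Theorem \ref{lim3} gives $w\in\mathcal{M}(\{z\})$ with $\|u-w\|=\mathcal{O}(\varepsilon_n)\|u\|$, whence $P_\pm w\in\cL(\{z,\overline{z}\})$ yields $\dist(\psi,\cL(\{z,\overline{z}\}))\le\|u-w\|=\mathcal{O}(\varepsilon_n)$.

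The main obstacle is the singular-value perturbation step in the reverse direction when $z\in\mathbb{R}$, where $P_\pm|_{\mathcal{M}(\{z\})}$ has nontrivial kernel ($\{0\}\oplus\cL(\{z,\overline{z}\})$ for $P_+$ and $\cL(\{z,\overline{z}\})\oplus\{0\}$ for $P_-$); one must verify that $\ker P_\pm|_{\mathcal{M}_n(\{z\},r)}$ tracks this kernel closely in gap, so that the nonzero singular values of $P_\pm|_{\mathcal{M}_n(\{z\},r)}$ cannot collapse to zero along the sequence.
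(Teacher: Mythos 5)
Your easy direction agrees with the paper's: both push the gap estimate $\delta(\mathcal{M}(\{z\}),\mathcal{M}_n(\{z\},r))=\mathcal{O}(\varepsilon_n)$ from Theorem~\ref{lim3} through the contractions $P_\pm$ to obtain $\delta(\cL(\{z,\overline z\}),\mathcal{M}_n^\pm(\{z\},r))=\mathcal{O}(\varepsilon_n)$, using the explicit preimages supplied by \eqref{imaginary} and \eqref{real}. The reverse direction is where you diverge from the paper, and where your argument has a genuine gap. The paper does not lift at all: it reverses the one-sided estimate by the abstract gap inequality of \cite[Lemma 213]{kato}, namely $\delta(\mathcal{N},\mathcal{M})\le\delta(\mathcal{M},\mathcal{N})/(1-\delta(\mathcal{M},\mathcal{N}))$ once $\delta(\mathcal{M},\mathcal{N})<1$ for finite-dimensional subspaces of matching dimension. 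This single inequality turns \eqref{perkato} into the two-sided estimate with no lifting and no singular-value analysis.

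Your route instead hinges on a uniform lower bound for the smallest nonzero singular value $\sigma_n$ of $P_\pm$ restricted to $\mathcal{M}_n(\{z\},r)$, and the ``finite-dimensional perturbation argument built on $\hat\delta(\mathcal{M}_n(\{z\},r),\mathcal{M}(\{z\}))\to 0$'' that you invoke does not deliver it. You have correctly located the obstacle, but gap convergence with equal dimensions does \emph{not} force $\ker P_\pm|_{\mathcal{M}_n(\{z\},r)}$ to track $\ker P_\pm|_{\mathcal{M}(\{z\})}$, nor does it keep $\sigma_n$ bounded away from zero. A concrete obstruction: in $\cH\oplus\cH$ take $\mathcal{M}=\Span\{e_1\oplus 0,\;0\oplus e_1\}$ and $\mathcal{M}_n=\Span\{e_1\oplus 0,\;\delta_n e_2\oplus e_1\}$ with $\delta_n\to 0$. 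Then $\hat\delta(\mathcal{M}_n,\mathcal{M})\to 0$ and $\dim\mathcal{M}_n=\dim\mathcal{M}=2$, yet $\ker P_+|_{\mathcal{M}_n}=\{0\}$ (so the kernel does not track $0\oplus\Span\{e_1\}$) and the smallest nonzero singular value of $P_+|_{\mathcal{M}_n}$ equals $\delta_n/\sqrt{1+\delta_n^2}\to 0$, so the norm of your minimum-norm lift blows up. To salvage your approach in the $z\in\mathbb{R}$ case you would need additional structural information about $\mathcal{M}_n(\{z\},r)$ beyond its gap-proximity to $\mathcal{M}(\{z\})$; the Kato lemma the paper uses is precisely what makes all of this unnecessary, because the estimate you do have already controls the quantity $\delta(\cL(\{z,\overline z\}),\mathcal{M}_n^\pm(\{z\},r))$ to which the lemma applies.
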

\begin{proof}
We assume that $z\notin\mathbb{R}$, the case where $z\in\mathbb{R}$ being treated similarly. It follows from Theorem \ref{lim3} that for all sufficiently large $n\in\mathbb{N}$ and any $\phi\in\cL(\{z,\overline{z}\})$ with
$\Vert\phi\Vert = 1$, we have
\begin{displaymath}
\left(
\begin{array}{c}
z\phi\\
\phi
\end{array}\right)\in\mathcal{M}(\{z\})\quad\textrm{and}\quad\left\Vert\left(
\begin{array}{c}
z\phi\\
\phi
\end{array}\right)-\left(
\begin{array}{c}
u\\
v
\end{array}\right)\right\Vert\le\frac{\sqrt{1+\vert z\vert^2}M_3\varepsilon_n}{1 - M_3\varepsilon_n}\quad\textrm{for some}\quad\left(
\begin{array}{c}
u\\
v
\end{array}\right)\in\mathcal{M}_n(\{z\},r).
\end{displaymath}
We deduce that
\begin{equation}\label{perkato}
\delta(\cL(\{z,\overline{z}\}),\mathcal{M}_n^+(\{z\},r)) \le \frac{\sqrt{1+\vert z\vert^2}M_3\varepsilon_n}{\vert z\vert(1 - M_3\varepsilon_n)}\quad\textrm{and}\quad\delta(\cL(\{z,\overline{z}\}),\mathcal{M}_n^-(\{z\},r)) \le \frac{\sqrt{1+\vert z\vert^2}M_3\varepsilon_n}{1 - M_3\varepsilon_n}.\end{equation}
The result now follows from \eqref{perkato} and the estimate
\begin{displaymath}
\delta(\mathcal{M}^\pm_n(\{z\},r),\cL(\{z,\overline{z}\})\le \frac{\delta(\cL(\{z,\overline{z}\},\mathcal{M}^\pm_n(\{z\},r))}{1-\delta(\cL(\{z,\overline{z}\},\mathcal{M}^\pm_n(\{z\},r))}
\end{displaymath}
(see \cite[Lemma 213]{kato}).
\end{proof}

\section{Eigenvalue Enclosures}

Recall the finite-section method which we discussed briefly in the introduction. The problem with this method is that we can
encounter sequences $z_n\in\Spec(A,\cL_n)$ with $z_n\to z\in\rho(A)$ (see Example \ref{ex1} and \cite{boff,bost,daug,rapa,lesh}).
It is also quite possible that for a normal operator we can encounter sequences $z_n\in\sigma(S_n)$ with $z_n\to z\in\rho(A)$. From
Corollary \ref{lim2b} it follows that this phenomenon can only occur if $z\in\mathcal{Q}(\sigma_{\ess}(A))$ or
$\overline{z}\in\sigma(A)\backslash\mathcal{Q}(\sigma_{\ess}(A))$ and $z\in\rho(A)$. For self-adjoint operators this does not represent a problem since \eqref{lower} ensures that
all erroneous limit points are non-real; however, normal operators can have non-real points in the spectrum.


\begin{corollary}
Let $(\cL_n)\in\Lambda$, $z\in \sigma(A)\backslash\mathcal{Q}(\sigma_{\ess}(A))$, $\delta(\cL(\{z,\overline{z}\}),\cL_n) =\varepsilon_n$ and fix $r,\Gamma$ as above. For a sequence $z_n\in\sigma(S_n)$, with $z_n\to z$, we set
\begin{displaymath}
\gamma_n(z_n) = \min\{\Vert(A-z_n)\phi\Vert:\phi\in\mathcal{M}_n(\{z\},r)^\pm,~\Vert\phi\Vert = 1\},
\end{displaymath}
then $\dist[z_n,\sigma(A)]\le \gamma_n(z_n)  = \mathcal{O}(\varepsilon_n)$, and for all sufficiently large $n\in\mathbb{N}$ we have
$\vert z_n- z\vert\le \gamma_n(z_n)$.
\end{corollary}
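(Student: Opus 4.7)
The plan is to establish the three assertions $\dist[z_n,\sigma(A)]\le\gamma_n(z_n)$, $\gamma_n(z_n)=\mathcal{O}(\varepsilon_n)$, and $|z_n-z|\le\gamma_n(z_n)$ for large $n$, in that order. The first is a direct consequence of the normality of $A$: for any unit vector $\phi\in\mathcal H$ the spectral theorem yields
\[
\|(A-z_n)\phi\|^2=\int_{\sigma(A)}|\lambda-z_n|^2\,d\langle E_\lambda\phi,\phi\rangle\ge\dist[z_n,\sigma(A)]^2,
\]
so minimising over unit $\phi\in\mathcal{M}_n(\{z\},r)^\pm$ gives $\dist[z_n,\sigma(A)]\le\gamma_n(z_n)$ with no further hypothesis.

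The main step is to bound $\gamma_n(z_n)$ by $\mathcal{O}(\varepsilon_n)$. For this I would reuse the quantitative gap estimate
\[
\delta\bigl(\cL(\{z,\overline z\}),\mathcal{M}_n^\pm(\{z\},r)\bigr)=\mathcal{O}(\varepsilon_n)
\]
that is established inside the proof of Corollary \ref{susp} (see \eqref{perkato}). Since $z\in\sigma(A)\setminus\mathcal{Q}(\sigma_{\ess}(A))\subset\sigma(A)\setminus\sigma_{\ess}(A)=\sigma_{\dis}(A)$ (using the inclusion $\sigma_{\ess}(A)\subset\mathcal{Q}(\sigma_{\ess}(A))$ noted in Lemma \ref{cor0}), the subspace $\cL(\{z\})$ contains a genuine eigenvector. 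I would pick a unit vector $\phi\in\cL(\{z\})\subset\cL(\{z,\overline z\})$, so that $A\phi=z\phi$, and then use the gap estimate to produce $\psi_n\in\mathcal{M}_n^\pm(\{z\},r)$ with $\|\phi-\psi_n\|=\mathcal{O}(\varepsilon_n)$. Writing
\[
(A-z_n)\psi_n=(z-z_n)\phi+(A-z_n)(\psi_n-\phi),
\]
the triangle inequality combined with the eigenvalue rate $|z_n-z|=\mathcal{O}(\varepsilon_n)$ from Theorem \ref{lim3} gives $\|(A-z_n)\psi_n\|=\mathcal{O}(\varepsilon_n)$. Since $\|\psi_n\|=1+\mathcal{O}(\varepsilon_n)$, normalising and taking the minimum over $\mathcal{M}_n^\pm(\{z\},r)$ delivers $\gamma_n(z_n)=\mathcal{O}(\varepsilon_n)$.

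For the final assertion, $z\in\sigma_{\dis}(A)$ is isolated in $\sigma(A)$, so for $z_n\to z$ and all sufficiently large $n$ the nearest point of $\sigma(A)$ to $z_n$ is $z$ itself; hence $\dist[z_n,\sigma(A)]=|z_n-z|$, and chaining with the first inequality yields $|z_n-z|\le\gamma_n(z_n)$. I do not expect a serious obstacle: every ingredient is already in hand from Lemma \ref{cor0}, Theorem \ref{lim3}, and the gap estimate underlying Corollary \ref{susp}. The one delicate point is the choice of test vector in the middle step: one must pick $\phi$ in the narrower eigenspace $\cL(\{z\})$ rather than in $\cL(\{z,\overline z\})$, because otherwise applying $A-z_n$ to a generic element would produce a component of size $|\overline z-z_n|$, which does not tend to zero when $z\notin\mathbb R$.
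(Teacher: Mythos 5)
Your proof is correct and follows essentially the same route as the paper, whose own proof is just a one-line reduction to Theorem \ref{lim3} and Corollary \ref{susp}. You have unpacked it accurately: assertion (a) is the spectral-theorem bound, (c) follows from (a) because $z$ is isolated in $\sigma(A)$ (which you correctly deduce from $z\in\sigma(A)\setminus\mathcal{Q}(\sigma_{\ess}(A))\subset\sigma_{\dis}(A)$ via Lemma \ref{cor0}), and (b) is obtained by approximating a genuine eigenvector from $\cL(\{z\})$ by a vector in $\mathcal{M}_n^\pm(\{z\},r)$ using the gap estimate and then applying $A-z_n$. Your remark that the test vector must be taken in the narrower space $\cL(\{z\})$ rather than $\cL(\{z,\overline z\})$ is precisely the right subtlety. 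The one point worth flagging is your citation ``$|z_n-z|=\mathcal{O}(\varepsilon_n)$ from Theorem \ref{lim3}'': the \emph{statement} of Theorem \ref{lim3} only asserts $\dist[z,\sigma(S_n)]=\mathcal{O}(\varepsilon_n)$, i.e.\ that some eigenvalue of $S_n$ lies within that distance, which does not by itself control an arbitrary sequence $z_n\in\sigma(S_n)$ with $z_n\to z$. The needed bound does hold, but because the argument in the proof of Theorem \ref{lim3} applies uniformly to every eigenvalue of $S_n$ enclosed by $\Gamma$ (each such eigenvector in $\mathcal{M}_n(\{z\},r)$ is close to a vector of the form $(z\phi,\phi)^T$), and this is what the paper implicitly invokes; you should cite the proof rather than the statement.
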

\begin{proof}
It suffices to show that $\gamma_n(z_n)  = \mathcal{O}(\varepsilon_n)$ and this is an immediate consequence of Theorem \ref{lim3} and
Corollary \ref{susp}.
\end{proof}

If we now define the following limit set
\begin{displaymath}
\hat{\lim_{n\to\infty}}\sigma(S_n) = \big\{z\in\mathbb{C}:\textrm{ there exist } z_n\in\sigma(S_n)\textrm{ with }z_n\to z \textrm{ and }\gamma(z_n)\to 0\big\},
\end{displaymath}
we obtain
\begin{displaymath}
\Big(\hat{\lim_{n\to\infty}}\sigma(S_n)\Big)\backslash\mathcal{Q}(\sigma_{\ess}(A)) = \sigma_{\dis}(A)\backslash\mathcal{Q}(\sigma_{\ess}(A)).
\end{displaymath}

\section{Unbounded Operators}
We suppose now that $A$ is an unbounded normal operator and that $\alpha\in\rho(A)\cap\mathbb{R}$. We define the following norm on $\Dom(A)$:
$\Vert\phi\Vert_{A} = \sqrt{\Vert A\phi\Vert^2 + \Vert\phi\Vert^2}$.
For a $\psi\in\Dom(A)$ and a subspace $\cL\subset\Dom(A)$
we write $\dist_A[\psi,\mathcal{L}] = \inf\{\Vert \psi - \phi\Vert_A:\phi\in\mathcal{L}\}$. If a sequence of subspaces $(\cL_n)$ satisfies $\dist_A[\psi,\cL_n]\to 0$ for all $\psi\in\Dom(A)$ then we write $(\cL_n)\in\Lambda(A)$. For two subspaces $\cL,\mathcal{M}\subset\Dom(A)$ let
\begin{displaymath}
\delta_A(\cL,\mathcal{M}) = \sup_{\psi\in\cL,~\Vert\psi\Vert_A=1}\dist_A[\psi,\mathcal{M}].
\end{displaymath}

The idea of mapping the second order spectrum of a bounded operator to that of an unbounded operator was introduce in
\cite[Lemma 3]{bost} and used to prove that for a self-adjoint operator $A$ with $(a,b)\cap\sigma(A)\subset\sigma_{\dis}(A)$
we have
\begin{equation}\label{dislim}
\Big(\lim_{n\to\infty}\sigma(S_n)\Big)\cap\mathbb{D}(a,b) =  \sigma_{\dis}(A)\cap(a,b)\quad\textrm{for all}\quad(\cL_n)\in\Lambda(A)
\end{equation}
(see \cite[Corollary 8]{bost}). We will use this mapping idea to extend our convergence results to unbounded normal operators.

For a basis $\{\psi_1,\dots,\psi_m\}$ of $\cL_n\subset\Dom(A)$ we have the matrices $B$, $L$, $M$,
and $S_n$ defined by \eqref{matrices0} and \eqref{matrices2}. Consider also the following matrices
\begin{displaymath}
B(\alpha)_{i,j} = \langle (A-\alpha)\psi_j,(A-\alpha)\psi_i\rangle,\quad
L(\alpha)_{i,j} = \langle(A+A^*-2\alpha)\psi_j,\psi_i\rangle,\quad
M(\alpha)_{i,j} = \langle \psi_j,\psi_i\rangle,
\end{displaymath}
and
\begin{displaymath}
S_n(\alpha) = \left(
\begin{array}{cc}
M(\alpha)^{-1} & 0\\
0 & M(\alpha)^{-1}
\end{array} \right)\left(
\begin{array}{cc}
L(\alpha) & -B(\alpha)\\
M(\alpha) & 0
\end{array} \right).
\end{displaymath}
Now we set $\hat{\psi}_j = (A - \alpha)\psi_j$, define the subspace $\hat{\cL}_n = \Span\{\hat\psi_1,\dots\hat\psi_m\}$, and note that
$\dim\hat{\cL}_n = \dim \cL_n$ follows from the fact that $\alpha\in\rho(A)$. Consider the matrices
\begin{displaymath}
\hat{B}(\alpha)_{i,j} = \langle (A-\alpha)^{-1}\hat\psi_j,(A-\alpha)^{-1}\hat\psi_i\rangle,
\quad \hat{L}(\alpha)_{i,j} = \langle((A-\alpha)^{-1}+(A^* - \alpha)^{-1})\hat\psi_j,\hat\psi_i\rangle,\quad
\hat{M}(\alpha)_{i,j} = \langle \hat\psi_j,\hat\psi_i\rangle,
\end{displaymath}
so that $\hat{B}(\alpha) = M(\alpha) = M$, $\hat{L}(\alpha) = L(\alpha) - 2\alpha M$ and $\hat{M}(\alpha) = B(\alpha) =
B - \alpha L + \alpha^2M$. Each of the matrices $\hat{B}(\alpha),\hat{L}(\alpha)$ and $\hat{M}(\alpha)$ defines an operator on $\hat\cL_n$ in a natural way:
\begin{align*}
\hat{B}(\alpha)\psi = \sum_{i}\langle (A-\alpha)^{-1}\psi,(A-\alpha)^{-1}&\hat{\psi}_i\rangle\hat{\psi}_i,\quad
\hat{L}(\alpha)\psi = \sum_{i}\langle[(A-\alpha)^{-1}+(A^*-\alpha)^{-1}]\psi,\hat{\psi}_i\rangle\hat{\psi}_i,\\
&\textrm{and}\quad \hat{M}(\alpha)\psi = \sum_{i}\langle \psi,\hat{\psi}_i\rangle\hat{\psi}_i.
\end{align*}
Now consider the block operator matrix
\begin{displaymath}
\hat{S}_{n}(\alpha) := \left(
\begin{array}{cc}
\hat{M}(\alpha)^{-1} & 0\\
0 & \hat{M}(\alpha)^{-1}
\end{array} \right)\left(
\begin{array}{cc}
\hat{L}(\alpha) & -\hat{B}(\alpha)\\
\hat{M}(\alpha) & 0
\end{array}\right):\hat\cL_n\oplus\hat\cL_n\to\hat\cL_n\oplus\hat\cL_n,
\end{displaymath}
and note that if $\hat{P}_n$ is the
orthogonal projection onto $\hat\cL_n$, then
\begin{displaymath}
\hat{S}_{n}(\alpha) = \left(
\begin{array}{cc}
\hat{P}_n[(A - \alpha)^{-1}+(A^*-\alpha)^{-1}] & -\hat{P}_n(A - \alpha)^{-1}(A^*-\alpha)^{-1}\\
I & 0
\end{array} \right):\hat\cL_n\oplus\hat\cL_n\to\hat\cL_n\oplus\hat\cL_n.
\end{displaymath}
Evidently, we have
\begin{align*}
\Spec_2((A - \alpha)^{-1},\hat{\cL}_n) &= \sigma(\hat{S}_n(\alpha))
= \{z^{-1}:z\in\sigma(S_n(\alpha))\}\\
&= \{z^{-1}:z\in\Spec_2((A-\alpha),\cL_n)\}= \{(z-\alpha)^{-1}:z\in\Spec_2(A,\cL_n)\}\\
&= \{(z-\alpha)^{-1}:z\in\sigma(S_n)\}.
\end{align*}

For a $z\in\sigma_{\dis}(A)$ with $(z-\alpha)^{-1}\notin\mathcal{Q}(\sigma_{\ess}((A - \alpha)^{-1}))$ and
\begin{displaymath}
\dist\bigg[(z-\alpha)^{-1},\Big(\mathcal{Q}(\sigma_{\ess}((A - \alpha)^{-1}))\cup\sigma((A - \alpha)^{-1})\cup
\sigma((A^*-\alpha)^{-1})\Big)\backslash\{(z - \alpha)^{-1}\}\bigg] = \delta,
\end{displaymath}
we denote by $\hat{\mathcal{M}}_n(\{(z - \alpha)^{-1}\},r)$ the spectral subspace of $\hat{S}_n(\alpha)$ associated
to those eigenvalues enclosed by a circle $\Gamma$ with center $(z - \alpha)^{-1}$
and radius $r>0$.  We will always assume that $r<\delta$, $\Gamma\cap\mathbb{R}=\varnothing$ if $z\notin\mathbb{R}$
and $\Gamma$ does not pass through zero if $z\in\mathbb{R}$.

For a $z\in\sigma_{\dis}(A)$, $\cL(\{z\})=\Span\{\phi_1,\dots,\phi_k\}$ and
$\cL(\{\overline{z}\})=\Span\{\phi_{k+1},\dots,\phi_{k+m}\}$ where the $\phi_j$ are orthonormal, we write
\begin{align*}
\mathcal{M}_\alpha(\{z\}) &= \Span\left\{\left(
\begin{array}{c}
(z-\alpha)^{-1}\phi_1\\
\phi_1
\end{array} \right),\dots,\left(
\begin{array}{c}
(z-\alpha)^{-1}\phi_{k+m}\\
\phi_{k+m}
\end{array} \right)\right\}\quad\textrm{if}\quad z\notin\mathbb{R}\\
\mathcal{M}_\alpha(\{z\}) &= \Span\left\{\left(
\begin{array}{c}
0\\
\phi_1
\end{array} \right),\left(
\begin{array}{c}
\phi_1\\
0
\end{array} \right),\dots,\left(
\begin{array}{c}
0\\
\phi_{k+m}
\end{array} \right),\left(
\begin{array}{c}
\phi_{k+m}\\
0
\end{array} \right)\right\}\quad\textrm{if}\quad z\in\mathbb{R}.
\end{align*}

\begin{theorem}\label{unbounded}
Let $(\cL_n)\in\Lambda(A)$, $z\in\sigma_{\dis}(A)$ with $(z-\alpha)^{-1}\notin(\mathcal{Q}(\sigma_{\ess}((A - \alpha)^{-1}))$,
$\delta_{A}[\cL(\{z,\overline{z}\}),\cL_n] = \varepsilon_n$ and fix $r,\Gamma$ as above.
Then $\hat\delta(\hat{\mathcal{M}}_n(\{(z-\alpha)^{-1}\},r),\mathcal{M}_\alpha(\{z\})) = \mathcal{O}(\varepsilon_n)$,
$\dist[z,\sigma(S_n)] = \mathcal{O}(\varepsilon_n)$ and $z$ is isolated in $\lim_{n\to\infty}\sigma(S_n)$.
\end{theorem}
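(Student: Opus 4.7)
The plan is to reduce the unbounded case to the bounded setting already handled in Theorem \ref{lim3} and Theorem \ref{th} by applying those results to the bounded normal operator $(A-\alpha)^{-1}$ relative to the transformed subspaces $\hat\cL_n := (A-\alpha)\cL_n$, and then transporting the conclusion back via the Cayley-like map $w\mapsto(w-\alpha)^{-1}$. The identifications $\hat{S}_n(\alpha)=S_{\hat\cL_n}$ (for the operator $(A-\alpha)^{-1}$) and $\sigma(\hat{S}_n(\alpha))=\{(w-\alpha)^{-1}:w\in\sigma(S_n)\}$ are already recorded in the preamble to the theorem. Since $\alpha\in\mathbb{R}$ and $A$ is normal, the eigenspaces of $A$ at $\{z,\overline z\}$ coincide with those of $(A-\alpha)^{-1}$ at $\{(z-\alpha)^{-1},(\overline z-\alpha)^{-1}\}$, and Lemma \ref{eigspaces} applied to $(A-\alpha)^{-1}$ identifies $\mathcal{M}_\alpha(\{z\})$ with the spectral subspace of the linearised operator $T$ built from $(A-\alpha)^{-1}$ at $(z-\alpha)^{-1}$.

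First I would verify that $(\hat\cL_n)\in\Lambda$ in the ordinary sense. For any $\xi\in\cH$, set $\eta:=(A-\alpha)^{-1}\xi\in\Dom(A)$; since $(\cL_n)\in\Lambda(A)$ there exist $\eta_n\in\cL_n$ with $\Vert\eta-\eta_n\Vert_A\to 0$, and the elementary bound $\Vert(A-\alpha)v\Vert\le(1+|\alpha|)\Vert v\Vert_A$ then forces $(A-\alpha)\eta_n\to\xi$ in $\cH$. Next I would translate the $A$-weighted gap hypothesis into an ordinary gap estimate for $\hat\cL_n$. Because $\cL(\{z,\overline z\})$ is finite-dimensional and invariant under $(A-\alpha)^{-1}$, any unit $\phi\in\cL(\{z,\overline z\})$ has $\eta:=(A-\alpha)^{-1}\phi\in\cL(\{z,\overline z\})$ with $\Vert\eta\Vert_A\le C_1$ for some constant $C_1$ depending only on $\alpha$ and $z$. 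Choosing $\eta_n\in\cL_n$ with $\Vert\eta-\eta_n\Vert_A\le C_1\varepsilon_n$ and setting $\hat\eta_n:=(A-\alpha)\eta_n\in\hat\cL_n$ yields $\Vert\phi-\hat\eta_n\Vert\le(1+|\alpha|)C_1\varepsilon_n$, so $\delta(\cL(\{z,\overline z\}),\hat\cL_n)=\mathcal{O}(\varepsilon_n)$.

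Now I would feed this into Theorem \ref{lim3} and Theorem \ref{th} applied to the bounded normal operator $(A-\alpha)^{-1}$ with the sequence $(\hat\cL_n)\in\Lambda$, at the point $(z-\alpha)^{-1}$, which by hypothesis lies in $\sigma_{\dis}((A-\alpha)^{-1})\setminus\mathcal{Q}(\sigma_{\ess}((A-\alpha)^{-1}))$. Theorem \ref{lim3} immediately yields $\hat\delta(\hat{\mathcal{M}}_n(\{(z-\alpha)^{-1}\},r),\mathcal{M}_\alpha(\{z\}))=\mathcal{O}(\varepsilon_n)$, which is the first conclusion, together with $\dist[(z-\alpha)^{-1},\sigma(\hat{S}_n(\alpha))]=\mathcal{O}(\varepsilon_n)$. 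Writing any near-minimiser as $\mu_n=(w_n-\alpha)^{-1}$ with $w_n\in\sigma(S_n)$, the identity $w_n-z=((z-\alpha)^{-1}-\mu_n)/(\mu_n(z-\alpha)^{-1})$ together with $\mu_n\to(z-\alpha)^{-1}\ne 0$ converts this into $\dist[z,\sigma(S_n)]=\mathcal{O}(\varepsilon_n)$. Finally, Theorem \ref{th} supplies a circle $\Gamma$ about $(z-\alpha)^{-1}$ eventually contained in $\rho(\hat{S}_n(\alpha))$; its image under $w\mapsto\alpha+w^{-1}$ is a simple closed curve around $z$ eventually contained in $\rho(S_n)$, which isolates $z$ in $\lim_{n\to\infty}\sigma(S_n)$.

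The only delicate point is the translation in the second paragraph between the $A$-weighted gap required by the hypothesis and the ordinary gap demanded by Theorem \ref{lim3} after composition with $A-\alpha$; everything else is Cayley-transform bookkeeping. That translation is clean precisely because $\cL(\{z,\overline z\})$ is finite-dimensional, so $\Vert\cdot\Vert$ and $\Vert\cdot\Vert_A$ are uniformly comparable on it and both directions of the gap can be controlled by the same constant.
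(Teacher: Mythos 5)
Your proposal is correct and takes essentially the same route as the paper: reduce to the bounded operator $(A-\alpha)^{-1}$ on the transformed spaces $\hat\cL_n=(A-\alpha)\cL_n$, verify $(\hat\cL_n)\in\Lambda$, translate the $A$-weighted gap hypothesis into $\delta(\cL(\{z,\overline z\}),\hat\cL_n)=\mathcal{O}(\varepsilon_n)$, and invoke Theorem \ref{lim3} (and Theorem \ref{th} for the isolation claim), then transport back via $w\mapsto\alpha+w^{-1}$. One small stylistic difference: the paper carries out the gap translation coordinatewise on a basis $\{\phi_j\}$ and sums, incurring a dimension factor $(k+m)$ in its explicit constant, whereas you observe that $\eta=(A-\alpha)^{-1}\phi$ remains in the finite-dimensional eigenspace, approximate $\eta$ directly in $\cL_n$ using $\delta_A$, and apply $A-\alpha$, which gives a cleaner constant without the dimension factor; you also spell out the Cayley-transfer for the $\dist[z,\sigma(S_n)]$ estimate and the use of $\Gamma\subset\rho(\hat S_n(\alpha))$ for isolation, which the paper leaves implicit in the closing sentence of its proof.
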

\begin{proof}
First we show that $(\hat{\cL}_n)\in\Lambda$. Let $u\in\mathcal{H}$, then there exists a $\psi\in\Dom(A)$ such that
$(A-\alpha)\psi = u$. Since $(\cL_n)\in\Lambda(A)$ we have a sequence $\psi_n\in\cL_n$ with $u - (A-\alpha)\psi_n \to 0$,
and $(\hat{\cL}_n)\in\Lambda$ follows. Now let $\cL(\{z,\overline{z}\}) = \Span\{\phi_1,\dots,\phi_{k+m}\}$ where $\cL(\{z\}) = \Span\{\phi_1,\dots,\phi_k\}$, $\cL(\overline{z}\}) = \Span\{\phi_{k+1},\dots,\phi_{k+m}\}$, and the $\phi_j$ are
orthonormal. Since $\delta_{A}[\cL(\{z,\overline{z}\}),\cL_n] = \varepsilon_n$ there are vectors $\psi_{n,j}\in\cL_n$ with
$\Vert(A-\alpha)(\phi_j - \psi_{n,j})\Vert\le \varepsilon_n(1+\vert\alpha\vert)\sqrt{\vert z\vert^2 + 1}$
for each $1\le j\le k+m$. Set $\hat{\psi}_{n,j} = (A-\alpha)\psi_{n,j}\in\hat{\cL}_n$, then for any normalised
$\phi\in\cL(\{z,\overline{z}\})$ we have $\phi = \sum\langle\phi,\phi_j\rangle\phi_j$ and
\begin{align*}
\Big\Vert\phi - \sum_{j=1}^k\frac{\langle\phi,\phi_j\rangle}{z-\alpha}\hat{\psi}_{n,j} -
\sum_{i=k+1}^{k+m}\frac{\langle\phi,\phi_i\rangle}{\overline{z}-\alpha}\hat{\psi}_{n,i}\Big\Vert &\le
\Big\Vert\sum_{j=1}^k\langle\phi,\phi_j\rangle\Big(\phi_j - \frac{\hat{\psi}_{n,j}}{z-\alpha}\Big)\Big\Vert
+ \Big\Vert\sum_{i=k+1}^{k+m}\langle\phi,\phi_i\rangle\Big(\phi_i
 - \frac{\hat{\psi}_{n,i}}{\overline{z}-\alpha}\Big)\Big\Vert\\
&=
\Big\Vert(A-\alpha)\sum_{j=1}^k\frac{\langle\phi,\phi_j\rangle}{z-\alpha}(\phi_j - \psi_{n,j})\Big\Vert\\
&+ \Big\Vert(A-\alpha)\sum_{i=k+1}^{k+m}\frac{\langle\phi,\phi_i\rangle}{\overline{z}-\alpha}(\phi_j
 - \psi_{n,j})\Big\Vert\\
&\le \frac{(k+m)\varepsilon_n(1+\vert\alpha\vert)\sqrt{\vert z\vert^2 + 1}}{\vert z-\alpha\vert}.
\end{align*}
Therefore we have $\delta(\cL(\{z,\overline{z}\},\hat{\cL}_n)\le(k+m)\varepsilon_n(1+\vert\alpha\vert)\sqrt{\vert z\vert^2 + 1}/\vert z-\alpha\vert$. The assertions follow from an application of Theorem \ref{lim3}
to the operator $(A-\alpha)^{-1}$ and eigenvalue $(z - \alpha)^{-1}$.
\end{proof}

\begin{corollary}\label{last}
Let $A$ be a self-adjoint, $(\cL_n)\in\Lambda(A)$ and $z\in\sigma_{\dis}(A)$. There exists an $\alpha\in\rho(A)\cap\mathbb{R}$ such that
$(z-\alpha)^{-1}\notin\mathcal{Q}(\sigma_{\ess}((A - \alpha)^{-1}))$. Let $\dist_{A}[\cL(\{z\}),\cL_n] =
\varepsilon_n$, then $\hat\delta(\hat{\mathcal{M}}_n(\{(z-\alpha)^{-1}\},r),\mathcal{M}(\{z\})) = \mathcal{O}(\varepsilon_n)$ and
$\dist[z,\sigma(S_n)] = \mathcal{O}(\varepsilon_n)$.
\end{corollary}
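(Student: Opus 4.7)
The plan is to reduce this to a direct application of Theorem \ref{unbounded} after carefully selecting $\alpha$ and verifying the hypothesis on $(z-\alpha)^{-1}$.

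First I would establish existence of the desired $\alpha$. Since $A$ is self-adjoint we have $\sigma(A)\subset\mathbb{R}$, and since $z\in\sigma_{\dis}(A)$ the point $z$ is isolated in $\sigma(A)$. Hence there is an open real interval about $z$ whose only intersection with $\sigma(A)$ is $\{z\}$; any point $\alpha$ in this interval distinct from $z$ lies in $\rho(A)\cap\mathbb{R}$. Consequently $(A-\alpha)^{-1}$ is a bounded self-adjoint operator on $\mathcal{H}$ and $\alpha\ne z$ ensures $(z-\alpha)^{-1}$ is well-defined and real.

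Next I would verify that $(z-\alpha)^{-1}\notin\mathcal{Q}(\sigma_{\ess}((A-\alpha)^{-1}))$, which is the key hypothesis of Theorem \ref{unbounded}. Since $(A-\alpha)^{-1}$ is self-adjoint its essential spectrum is a compact subset of $\mathbb{R}$, and by the spectral mapping theorem it equals $\{(e-\alpha)^{-1}:e\in\sigma_{\ess}(A)\}$. By Lemma \ref{cor0}, for any compact $\Sigma\subset\mathbb{C}$ we have $\mathcal{Q}(\Sigma)\cap\mathbb{R}=\Sigma\cap\mathbb{R}$, so
\begin{displaymath}
\mathcal{Q}(\sigma_{\ess}((A-\alpha)^{-1}))\cap\mathbb{R}=\sigma_{\ess}((A-\alpha)^{-1}).
\end{displaymath}
As $(z-\alpha)^{-1}\in\mathbb{R}$, membership in the left-hand side would force $(z-\alpha)^{-1}=(e-\alpha)^{-1}$ for some $e\in\sigma_{\ess}(A)$, hence $z=e\in\sigma_{\ess}(A)$, contradicting $z\in\sigma_{\dis}(A)$.

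Finally I would apply Theorem \ref{unbounded} with this $\alpha$ and the eigenvalue $z$. Since $z\in\mathbb{R}$, we have $\overline{z}=z$, so $\cL(\{z,\overline{z}\})=\cL(\{z\})$ and therefore $\delta_A[\cL(\{z,\overline{z}\}),\cL_n]=\dist_A[\cL(\{z\}),\cL_n]=\varepsilon_n$. Moreover, the real-$z$ formula for $\mathcal{M}_\alpha(\{z\})$ in the unbounded section coincides verbatim with the real-$z$ formula \eqref{real} for $\mathcal{M}(\{z\})$ from Lemma \ref{eigspaces}, so $\mathcal{M}_\alpha(\{z\})=\mathcal{M}(\{z\})$. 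The two conclusions of Theorem \ref{unbounded} then read exactly as the assertions of the corollary.

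The only mildly delicate point is the second paragraph: the observation that Lemma \ref{cor0} makes the real part of $\mathcal{Q}(\sigma_{\ess}((A-\alpha)^{-1}))$ collapse onto $\sigma_{\ess}((A-\alpha)^{-1})$, combined with the spectral mapping identity for the inverse. Everything else is bookkeeping.
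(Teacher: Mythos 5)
Your proposal is correct and follows essentially the same route as the paper: choose $\alpha$ in a punctured real interval about the isolated eigenvalue $z$, observe that $(z-\alpha)^{-1}$ is real while $\mathcal{Q}(\sigma_{\ess}((A-\alpha)^{-1}))\cap\mathbb{R}=\sigma_{\ess}((A-\alpha)^{-1})$ by Lemma \ref{cor0}, and then invoke Theorem \ref{unbounded}. The one detail you spell out that the paper's very terse proof leaves implicit --- that for real $z$ one has $\cL(\{z,\overline{z}\})=\cL(\{z\})$ and $\mathcal{M}_\alpha(\{z\})=\mathcal{M}(\{z\})$, so the conclusions of Theorem \ref{unbounded} really are word-for-word those of the corollary --- is a genuine and worthwhile bookkeeping step.
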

\begin{proof}
If $z\in\sigma_{\dis}(A)$, then there exists a $\tau>0$ such that $(z-\tau,z+\tau)\cap\sigma(A) = \{z\}$ and we may choose any
$\alpha\in(z-\tau,z+\tau)\backslash\{z\}$. Since $z-\alpha\in\mathbb{R}$ and
$(z-\alpha)^{-1}\notin\sigma_{\ess}((A - \alpha)^{-1}))$,
it follows that $(z-\alpha)^{-1}\notin\mathcal{Q}(\sigma_{\ess}((A - \alpha)^{-1}))$.
\end{proof}

Combining Corollary \ref{last} with \eqref{dislim} we have the following statement: if $(a,b)\cap\sigma(A)\subset\sigma_{\dis}(A)$ and $(\cL_n)\in\Lambda(A)$, then we have
\begin{displaymath}
\Big(\lim_{n\to\infty}\sigma(Sn)\Big)\cap\mathbb{D}(a,b) =  \sigma_{\dis}(A)\cap(a,b),
\end{displaymath}
and for any $z\in(a,b)\cap\sigma_{\dis}(A)$ there exist $z_n\in\sigma(S_n)$
with $\vert z_n-z\vert = \mathcal{O}(\delta_{A}(\cL(\{z\}),\cL_n))$. Now let $(a',b')\cap\sigma(A) = \{z\}$, then using \eqref{high} we have
\begin{equation}\label{quadcon}
\left[\Re z_n - \frac{\vert\Im z_n\vert^2}{b' - \Re z_n},\Re z_n + \frac{\vert\Im z_n\vert^2}{\Re z_n - a'}\right]\cap\sigma(A) = \{z\}
\end{equation}
for all sufficiently large $n\in\mathbb{N}$. From \eqref{quadcon} it follows that $\vert \Re z_n - z\vert = \mathcal{O}(\delta_{A}(\cL(\{z\}),\cL_n)^2)$. The convergence rate in Corollary \ref{last} has been observed in computations (see \cite[examples 6 and 8]{bost}).

\section{Acknowledgements}
The author gratefully acknowledges the support of EPSRC grant no. EP/I00761X/1.


\begin{thebibliography}{99}
\bibitem{boff}{\scshape D. Boffi, F. Brezzi, L. Gastaldi}, On the problem of spurious eigenvalues in the approximation of
linear elliptic problems in mixed form. Math. Comput. 69 (1999) 121--140.
\bibitem{bodu}{\scshape D. Boffi, R. G. Duran, and L. Gastaldi}. A remark on spurious eigenvalues in a square. Appl. Math.
Lett., 12(3) (1999) 107--114.
\bibitem{bo2}{\scshape L. Boulton},
Non-variational approximation of discrete eigenvalues of self-adjoint operators. IMA J. Numer. Anal. 27 (2007) 102--12.
\bibitem{bo}{\scshape L. Boulton},
Limiting set of second order spectra. Math. Comp. 75 (2006) 1367--1382.
\bibitem{bole}{\scshape L. Boulton, M. Levitin}, On Approximation of the Eigenvalues of Perturbed Periodic Schrodinger Operators,
J. Phys. A: Math. Theor. 40 (2007), 9319--9329.
\bibitem{bost}{\scshape L. Boulton, M. Strauss},
On the convergence of second-order spectra and multiplicity.
Proc. R. Soc. A 467 (2011) 264--284.
\bibitem{bobo}{\scshape L. Boulton, N. Boussaid}, Non-variational computation of the eigenstates of dirac operators with
radially symmetric potentials. LMS J. Comput. Math. 13 (2010) 10--32.
\bibitem{chat}{\scshape F. Chatelin},
Spectral Approximation of Linear Operators. Academic Press (1983).
\bibitem{daug}{\scshape M. Dauge, M. Suri}, Numerical approximation of the spectra of non-compact operators arising in
buckling problems. J. Numer. Math. 10 (2002) 193--219.
\bibitem{da}{\scshape E. B. Davies},
Spectral enclosures and complex resonances for general self-adjoint operators. LMS J. Comput. Math. 1 (1998) 42--74.
IMA J. Numer. Anal. (2004) 417--438.
\bibitem{dapl}{\scshape E. B. Davies, M. Plum},
Spectral pollution. IMA J. Numer. Anal. 24 (2004) 417--438.
\bibitem{kato0}{\scshape T. Kato}, On the upper and lower bounds of eigenvalues. J. Phys. Soc. Japan 4 (1949) 334--339.
\bibitem{kato}{\scshape T. Kato}, Perturbation theory for nullity, deficiency and other quantities of linear operators. J. Anallyse Math. 6 (1958) 261--322.
\bibitem{katopert}{\scshape T. Kato},
Perturbation theory for linear operators, Springer-Verlag (1966).
\bibitem{lesh}{\scshape M. Levitin, E. Shargorodsky},
Spectral pollution and second order relative spectra for
self-adjoint operators,
IMA J. Numer. Anal. (2004), 393--416.
\bibitem{marc}{\scshape M. Marletta}, Neumann-Dirichlet maps and analysis of spectral pollution for non-self-adjoint elliptic PDEs with real essential spectrum. IMA J Numer Anal 30 (2010) 917--939.
\bibitem{zim}{\scshape U. Mertins, S. Zimmermann}, Variational bounds to eigenvalues of self-adjoint eigenvalue problems with arbitrary spectrum.
Z. Anal. Anwendungen 14 (1995) 327--345.
\bibitem{shar}{\scshape E. Shargorodsky},
Geometry of higher order relative spectra and projection methods. J. Operator Theory 44 (2000) 43--62.
\bibitem{rapa}{\scshape J. Rappaz, J. Sanchez Hubert, E. Sanchez Palencia, D. Vassiliev}, On spectral pollution in
the finite element approximation of thin elastic membrane shells. Numer. Math. 75 (1997) 473--500.
\bibitem{str}{\scshape M. Strauss},
Quadratic projection methods for approximating the spectrum of
self-adjoint operators.
IMA J Numer Anal 31 (2011)  40--60.
\bibitem{stri}{\scshape R. Strichartz},
The Way of Analysis. Jones and Bartlett (2000).\\
\end{thebibliography}
\end{document}